\tikzstyle{node small}=[fill=none, draw=none, shape=circle, inner sep=0.2]
\tikzstyle{bullet}=[fill=black, draw=black, shape=circle, scale=0.3]
\tikzstyle{bullet small}=[fill=black, draw=black, shape=circle, scale=0.14]
\tikzstyle{bullet mid}=[fill=black, draw=black, shape=circle, scale=0.2]
\tikzstyle{0.8}=[fill=none, draw=none, scale=0.8]
\tikzstyle{0.6}=[fill=none, draw=none, scale=0.6]
\tikzstyle{red}=[draw=red, ->]
\tikzstyle{red 1}=[draw=red, <-]
\tikzstyle{black arrow}=[->]
\tikzstyle{dashed}=[-, draw=black, densely dotted]
\tikzstyle{Red-Thick}=[-, draw=red, thick]
\tikzstyle{spin}=[-, postaction={{decorate}}]
\tikzstyle{Blue-Thick}=[-, fill=none, draw=blue, thick]
\tikzstyle{extend-edge}=[-, dashed]
\tikzstyle{blue-double}=[-, draw=blue, thick, double]
\newcommand{\tei}[0]{{Teichm\"uller}}
\DeclareMathOperator{\osp}{OSp}
\DeclareMathOperator{\wt}{wt}
\DeclareMathOperator{\ber}{Ber}
\DeclareMathOperator{\st}{st}
\DeclareMathOperator{\tp}{t}
\DeclareMathOperator{\id}{id}
\DeclareMathOperator{\PSL}{PSL}
\newcommand{\ospmatrix}[9]{
\left(\begin{array}{cc|c}
#1&#2&#3\\#4&#5& #6 
\\
\hline
#7 &#8 &#9
\end{array}
\right)
}
\newcommand{\calA}[0]{\mathcal{A}}
\tikzset {->-/.style={decoration={markings, mark=at position .5 with {\arrow{latex}}}, postaction={decorate}}}
\tikzset {-->-/.style={decoration={markings, mark=at position .5 with {\arrow[scale=2]{latex}}}, postaction={decorate}}}
\newcommand{\midarrow}{\tikz \draw[-triangle 90] (0,0) -- +(.1,0);}
\newcommand{\midrevarrow}{\tikz \draw[-triangle 90] (0,0) -- +(-.1,0);}
\patchcmd{\@settitle}{\uppercasenonmath\@title}{}{}{}
\patchcmd{\@setauthors}{\MakeUppercase}{}{}{}
\patchcmd{\section}{\scshape}{}{}{}
\@date \else {\vskip2ex 
  \centering\footnotesize\@date\par\vskip1ex}\fi
\else \@footnotetext{\@setdate}\fi}
\renewcommand{\l}[2]{\lambda_{#1,#2}}
\newcommand{\tu}[3]{\bigtriangleup^{#1}_{#2,#3}}
\newcommand{\td}[3]{\bigtriangledown^{#1}_{#2,#3}}
\newcommand{\Tu}[0]{\bigtriangleup}
\newcommand{\Td}[0]{\bigtriangledown}
\theoremstyle{plain}
\newtheorem{theorem}{Theorem}[section]
\newtheorem{lemma}[theorem]{Lemma}
\newtheorem{prop}[theorem]{Proposition}
\newtheorem{corollary}[theorem]{Corollary}
\theoremstyle{definition}
\newtheorem{remark}[theorem]{Remark}
\newtheorem{definition}[theorem]{Definition}
\title[]{Matrix Formulae for Decorated Super Teichm\"{u}ller Spaces}
\author{Gregg Musiker}
\author{Nicholas Ovenhouse}
\author{Sylvester W. Zhang}
\thanks{School of Mathematics, University of Minnesota, Minneapolis, MN 55455, USA}
\thanks{\emph{Email}:
\href{mailto:musiker@umn.edu}{musiker@umn.edu},
\href{mailto:ovenh001@umn.edu}{ovenh001@umn.edu},
\href{mailto:swzhang@umn.edu}{swzhang@umn.edu}}
\date{}
\begin{document}

\begin{abstract}
    For an arc on a bordered surface with marked points, we associate a holonomy matrix using a product of elements of the supergroup $\osp(1|2)$,
    which defines a flat $\osp(1|2)$-connection on the surface. We show that our matrix formulas of an arc yields its super $\lambda$-length in Penner-Zeitlin's 
    decorated super Teichm\"uller space. This generalizes the matrix formulas of Fock-Goncharov and Musiker-Williams.
    We also prove that our matrix formulas agree with the combinatorial formulas given in the authors' 
    previous works. As an application, we use our matrix formula in the case of an annulus to obtain new results on super Fibonacci numbers. 
    
\end{abstract}

\maketitle
\setcounter{tocdepth}{1}
\tableofcontents
\setlength{\parindent}{0em}
\setlength{\parskip}{0.5em}

\tableofcontents
\section*{Introduction}

Cluster algebras, first introduced by Fomin and Zelevinsky \cite{fz_02}, are certain commutative algebras posessing additional combinatorial structures. 
Since their discovery, cluster algebras have been connected to many other areas of mathematics and physics such as representation theory, integrable systems, 
Teichm\"{u}ller theory and string theory. In recent years, much progress have been made towards a theory of super-commutative cluster algebras, 
such as \cite{o_15,os_18}, \cite{lmrs_21}, \cite{sv09,shemyakova2022super} and \cite{moz21,moz22}. The current authors, in our previous two papers \cite{moz21, moz22},
began the project of exploring a possible super cluster algebraic interpretation of Penner-Zeitlin's decorated super Teichm\"{u}ller theory,
generalizing the known cluster structure of Penner's $\lambda$-length coordinates. This paper is the third in this series,
and as such we will use several conventions and definitions from our previous works, citing them where appropriate.

For a triangulation $T$ of a marked surface $(S,M)$, we define a graph $\Gamma_T$ embedded in $S$ and for each point of the decorated super Teichm\"{u}ller space,
a flat $\osp(1|2)$-connection on $\Gamma_T$. A similar construction was given in \cite{bouschbacher_13} of a graph connection in terms of shear coordinates
on the (un-decorated) super Teichm\"{u}ller space%
\footnote{In the language of the cluster algebra literature, shear coordinates are $\mathcal{X}$-type cluster variables, 
while $\lambda$-lengths are $\mathcal{A}$-type cluster variables.}.
We then define certain canonical paths on $\Gamma_T$ for each arc $(a,b)$ of $S$,
thereby associating the arc with a holonomy matrix $H_{a,b}$. Our main result (\Cref{thm:12-entry}) is that for a polygon (i.e. a marked disk)
the $(1,2)$-entry of the holonomy matrix is the super $\lambda$-length, up to sign. We also give precise formulas for all entries of
the holonomy matrices in terms of super $\lambda$-lengths and $\mu$-invariants (\Cref{thm:generic}), and
we give combinatorial interpretations of these matrix entries (\Cref{thm:combo}) as generating functions for double dimer covers in the spirit of \cite{mw13}.

The structure of the paper is as follows. 
In \Cref{sec:background}, we review background on the decorated super {\tei} theory of \cite{pz_19}, 
and recall some conventions in our previous papers \cite{moz21,moz22}. 
In \Cref{sec:osp}, we provide necessary information on super-matrices and the ortho-symplectic group $\osp(1|2)$. 
In \Cref{sec:Mpaths}, we define the graph $\Gamma_T$ and a flat $\osp(1|2)$\nobreakdash-connection on it, and state our main theorem. 
A more detailed version of our matrix formulas can be found in \Cref{sec:Hproof} which is devoted to a proof of the main theorem. 
In \Cref{sec:DD} we give a combinatorial interpretation of the result of this paper using double dimer covers, connecting the main results of the current paper and those of \cite{moz22}.  We revisit the \emph{super Fibonacci numbers} studied in \cite{moz22}, and examine the corresponding holonomy matrices in \Cref{sec:Fib}.  Finally, in \Cref{sec:appendix}, we provide a more geometric interpretation of our matrix formula, via a different viewpoint of the decorated super {\tei} theory.

\section{Background on Decorated Super Teichm\"{u}ller Theory} \label{sec:background}

In this section, we briefly recall the basic definitions of the decorated super Teichm\"uller space of a polygon (see \cite{pz_19} and \cite{moz21} for more details).
Consider a polygon $P$ (i.e. a disc with marked points on its boundary), a fixed triangulation $T$, and a choice of orientation of the edges of $T$.

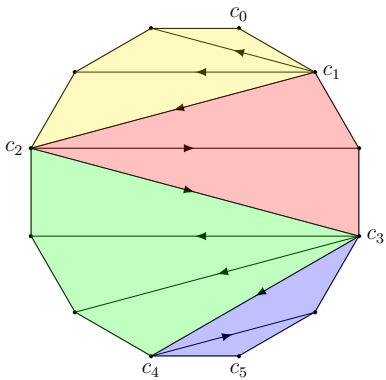
\begin{figure}[h!]
\centering
\scalebox{0.7}{
\begin{tikzpicture}[decoration={
    markings,
    mark=at position 0.5 with {\arrow[scale=1.5]{latex}}},
    scale=0.9] 

\tikzstyle{every path}=[draw] 
		\path
    node[
      regular polygon,
      regular polygon sides=12,
      draw,
      inner sep=2.2cm,
    ] (hexagon) {}
    %
    (hexagon.corner 1) node[above] {$c_0$}
    (hexagon.corner 2) node[above] {}
    (hexagon.corner 3) node[left] {}
    (hexagon.corner 4) node[left] {$c_2$}
    (hexagon.corner 5) node[below] {}
    (hexagon.corner 6) node[right] {}
    (hexagon.corner 7) node[below] {$c_4$}
    (hexagon.corner 8) node[below] {$c_5$}
    (hexagon.corner 9) node[left] {}
    (hexagon.corner 10) node[right] {$c_3$}
    (hexagon.corner 11) node[below] {}
    (hexagon.corner 12) node[right] {$c_1$}
;

\foreach \x in {1,2,...,12}{
\draw (hexagon.corner \x) node [fill,circle,scale=0.2] {};}

\draw[postaction={decorate}] (hexagon.corner 12)--(hexagon.corner 4);
\draw[postaction={decorate}] (hexagon.corner 12)--(hexagon.corner 2);
\draw[postaction={decorate}] (hexagon.corner 12)--(hexagon.corner 3);

\draw[postaction={decorate}](hexagon.corner 4)--(hexagon.corner 10);
\draw[postaction={decorate}](hexagon.corner 4)--(hexagon.corner 11);

\draw[postaction={decorate}](hexagon.corner 10)--(hexagon.corner 7);
\draw[postaction={decorate}](hexagon.corner 10)--(hexagon.corner 5);
\draw[postaction={decorate}](hexagon.corner 10)--(hexagon.corner 6);

\draw[postaction={decorate}](hexagon.corner 7)--(hexagon.corner 9);

\draw[fill=yellow, nearly transparent] (hexagon.corner 12)--(hexagon.corner 4)--(hexagon.corner 3)--(hexagon.corner 2)--(hexagon.corner 1)--cycle;
\draw[fill=red, nearly transparent] (hexagon.corner 12)--(hexagon.corner 4)--(hexagon.corner 10)--(hexagon.corner 11)--cycle;

\draw[fill=green, nearly transparent] (hexagon.corner 10)-- (hexagon.corner 4)--(hexagon.corner 5)--(hexagon.corner 6)--(hexagon.corner 7)--cycle;
\draw[fill=blue, nearly transparent] (hexagon.corner 10)--(hexagon.corner 7)--(hexagon.corner 8)--(hexagon.corner 9)--cycle;

\coordinate (m 1) at  ($0.38*(hexagon.corner 2)+0.38*(hexagon.corner 1)+0.24*(hexagon.corner 12)$);
\coordinate (m 2) at  ($0.28*(hexagon.corner 2)+0.28*(hexagon.corner 3)+0.44*(hexagon.corner 12)$);
\coordinate (m 3) at  ($0.24*(hexagon.corner 4)+0.24*(hexagon.corner 3)+0.52*(hexagon.corner 12)$);
\coordinate (m 4) at  ($0.52*(hexagon.corner 4)+0.24*(hexagon.corner 11)+0.24*(hexagon.corner 12)$);
\coordinate (m 5) at  ($0.56*(hexagon.corner 4)+0.22*(hexagon.corner 11)+0.22*(hexagon.corner 10)$);
\coordinate (m 6) at  ($0.27*(hexagon.corner 4)+0.27*(hexagon.corner 5)+0.46*(hexagon.corner 10)$);
\coordinate (m 7) at  ($0.23*(hexagon.corner 6)+0.23*(hexagon.corner 5)+0.44*(hexagon.corner 10)$);
\coordinate (m 8) at  ($0.26*(hexagon.corner 6)+0.26*(hexagon.corner 7)+0.48*(hexagon.corner 10)$);
\coordinate (m 9) at  ($0.25*(hexagon.corner 9)+0.5*(hexagon.corner 7)+0.25*(hexagon.corner 10)$);
\coordinate (m 10) at  ($0.36*(hexagon.corner 9)+0.24*(hexagon.corner 7)+0.4*(hexagon.corner 8)$);
\end{tikzpicture}
}
\caption{The default orientation of a generic triangulation where each fan segment is colored differently.}
\label{fig:default_spin}
\end{figure}

For simplicity, we will always consider a ``\emph{default orientation}'', as defined in \cite{moz21}, which is pictured in \Cref{fig:default_spin}. 
The maximal groupings of consecutive triangles which share a common vertex (indicated by different colors) are called ``\emph{fan segments}'', and
the common vertex they share is called the ``\emph{fan center}'' (vertices labelled $c_1,c_2, \dots, c_N$ from top to bottom). The default orientation is defined so that the edges connecting
fan centers are oriented $c_1 \to c_2 \to c_3 \to \cdots \to c_N$, and the remaining edges are oriented \emph{away} from the fan centers.

The \emph{decorated super Teichm\"uller space} of $P$ is a super-commutative algebra\footnote{Technically, each choice of spin structure (represented by a choice of orientation
of the triangulation) corresponds to a different connected component of the space.} $\mathcal{A}$ with the following generators: 
for each edge in $T$ with endpoints $i$ and $j$, an even generator $\lambda_{ij}$
(called a ``$\lambda$-length''), 
and for each triangle in $T$ with vertices $i,j,k$, an odd generator $\boxed{ijk}$ (called a ``$\mu$-invariant'')%
\footnote{The algebra $\mathcal{A}$ is technically the tensor product of the field of rational functions in the square roots of the $\lambda$-lengths and
the exterior algebra generated by $\mu$-invariants.}.

When two triangulations are related by a flip, as in \Cref{fig:super_ptolemy}, one can define new elements of the algebra by the following ``\emph{super Ptolemy relations}'':
\begin{align}
    ef      &= ac+bd+\sqrt{abcd} \, \sigma\theta \label{eqn:super_ptolemy_lambda} \\
    \sigma' &= \frac{\sigma\sqrt{bd}-\theta\sqrt{ac}}{\sqrt{ac+bd}} = \frac{\sigma\sqrt{bd}-\theta\sqrt{ac}}{\sqrt{ef}}   \label{eqn:super_ptolemy_mu_right} \\
    \theta' &= \frac{\theta\sqrt{bd}+\sigma \sqrt{ac}}{\sqrt{ac+bd}} = \frac{\theta\sqrt{bd}+\sigma \sqrt{ac}}{\sqrt{ef}} \label{eqn:super_ptolemy_mu_left}
\end{align}

\begin{figure}[h!]
\centering
\begin{tikzpicture}[scale=0.6, baseline, thick]
    \draw (0,0)--(3,0)--(60:3)--cycle;
    \draw (0,0)--(3,0)--(-60:3)--cycle;

    \draw (0,0)--node {\midarrow} (3,0);

    \draw node[above]      at (70:1.5){$a$};
    \draw node[above]      at (30:2.8){$b$};
    \draw node[below]      at (-30:2.8){$c$};
    \draw node[below=-0.1] at (-70:1.5){$d$};
    \draw node[above] at (1,-0.12){$e$};

    \draw node[left] at (0,0) {$i$};
    \draw node[above] at (60:3) {$\ell$};
    \draw node[right] at (3,0) {$k$};
    \draw node[below] at (-60:3) {$j$};

    \draw node at (1.5,1){$\theta$};
    \draw node at (1.5,-1){$\sigma$};
\end{tikzpicture}
\begin{tikzpicture}[baseline]
    \draw[->, thick](0,0)--(1,0);
    \node[above]  at (0.5,0) {};
\end{tikzpicture}
\begin{tikzpicture}[scale=0.6, baseline, thick,every node/.style={sloped,allow upside down}]
    \draw (0,0)--(60:3)--(-60:3)--cycle;
    \draw (3,0)--(60:3)--(-60:3)--cycle;

    \draw node[above]      at (70:1.5)  {$a$};
    \draw node[above]      at (30:2.8)  {$b$};
    \draw node[below]      at (-30:2.8) {$c$};
    \draw node[below=-0.1] at (-70:1.5) {$d$};
    \draw node[left]       at (1.7,1)   {$f$};

    \draw (1.5,-2) --node {\midarrow} (1.5,2);

    \draw node[left] at (0,0) {$i$};
    \draw node[above] at (60:3) {$\ell$};
    \draw node[right] at (3,0) {$k$};
    \draw node[below] at (-60:3) {$j$};

    \draw node at (0.8,0){$\theta'$};
    \draw node at (2.2,0){$\sigma'$};
\end{tikzpicture}
\caption{Super Ptolemy transformation}
\label{fig:super_ptolemy}
\end{figure}
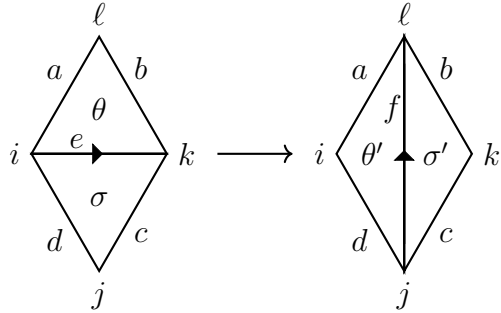
Note that in \Cref{eqn:super_ptolemy_lambda}, the order of multiplying the two odd variables $\sigma$ and $\theta$ is determined by the orientation of the 
edge being flipped (see the arrow in \Cref{fig:super_ptolemy}).

In Figure \ref{fig:super_ptolemy}, the orientations of the four boundary edges 
are omitted, but the super Ptolemy transformation does change the orientation of the edge labeled $b$ (the edges $a,c,d$ keep their same orientation).

\begin {definition} \label{def:h_length}
    For a triangle with vertices $i,j,k$, define the \emph{$h$-length at vertex $i$} to be
    \[ h^i_{jk} = \frac{\lambda_{jk}}{\lambda_{ij}\lambda_{ik}} \]
    Note that this is the same as the definition of ``\emph{$h$-length}'' in the classical (i.e. non-super) case. See e.g. \cite{penner_12}.
\end {definition}

\begin {definition} \label{def:norm_mu}
    For a triangle with vertices $i,j,k$ and $\mu$-invariant $\theta = \boxed{ijk}$, we also define two sets of \emph{normalized $\mu$-invariants}:
    \[ 
        \Tu^i_{jk} := \sqrt{\lambda_{jk}\over \lambda_{ij}\lambda_{ik}}\; \theta 
                    = \sqrt{h^i_{jk}} \; \theta, \;\; 
        \Tu^j_{ik} := \sqrt{\lambda_{ik}\over \lambda_{ij}\lambda_{jk}}\; \theta
                    = \sqrt{h^j_{ik}} \; \theta, \;\; 
        \Tu^k_{ij} := \sqrt{\lambda_{ij}\over \lambda_{ik}\lambda_{jk}}\; \theta
                    = \sqrt{h^k_{ij}} \; \theta,
    \]
    \[ 
        \Td^i_{jk} := \sqrt{\lambda_{ij}\lambda_{ik}\over \lambda_{jk}}\; \theta = \sqrt{1\over h^i_{jk}} \; \theta, \;
        \Td^j_{ik} := \sqrt{\lambda_{ij}\lambda_{jk}\over \lambda_{ik}}\; \theta = \sqrt{1\over h^j_{ik}} \; \theta, \;
        \Td^k_{ij} := \sqrt{\frac{\lambda_{ik}\lambda_{jk}}{\lambda_{ij}}}\theta = \sqrt{\frac{1}{h^k_{ij}}} \; \theta,
    \]
    all of which are associated to a (triangle, vertex) pair, i.e. to an angle.  
\end {definition}

\begin {remark} \label{rmk:normalized_mu_relations}
    The $h$-lengths and normalized $\mu$-invariants within a triangle satisfy the following relations 
    \begin {multicols}{2}
    \begin {itemize}
        \item[$(i)$] $h^j_{ik} = \left( \frac{\lambda_{ik}}{\lambda_{jk}} \right)^2 h^i_{jk}$

        \medskip

        \item[$(ii)$] $\Tu^j_{ik} = \frac{\lambda_{ik}}{\lambda_{jk}} \, \Tu^i_{jk}$

        \columnbreak

        \item[$(iii)$] $\Td^j_{ik} = \frac{\lambda_{jk}}{\lambda_{ik}} \, \Td^i_{jk}$

        \medskip

        \item[$(iv)$] $\Td^j_{ik} = \lambda_{ij} \Tu^i_{jk}$
    \end {itemize}
    \end {multicols}
\end {remark}

\begin {remark} \label{rmk:alternate_ptolemy}
    In terms of the normalized $\mu$-invariants, the super Ptolemy relations
    (\Cref{eqn:super_ptolemy_lambda,eqn:super_ptolemy_mu_right,eqn:super_ptolemy_mu_left}) take a very simple form.     
    Using the labelling of vertices of the quadrilateral in \Cref{fig:super_ptolemy}, we can rewrite these equations as follows.
     \begin {align*}
        \lambda_{j\ell} &= \frac{\lambda_{ij}\lambda_{kl}+\lambda_{i\ell}\lambda_{jk}}{\lambda_{ik}} + \Td^j_{ik}\Td^\ell_{ik} \label{eqn:super_ptolemy_lambda_star} \tag{$1^\star$} \\
        \Tu^k_{j\ell}   &= \Tu^k_{ij} - \Tu^k_{i\ell} \label{eqn:super_ptolemy_mu_right_star}\tag{$2^\star$} \\
        \Tu^i_{j\ell}   &= \Tu^i_{jk} + \Tu^i_{k\ell} \label{eqn:super_ptolemy_mu_left_star}\tag{$3^\star$}
    \end {align*}
\end {remark}

\begin {prop} \label{prop:h-lengths_are_additive}
    Let $ijk$ and $ik\ell$ be two adjacent triangles, with the edge separating the triangles oriented $i \to k$ (as in \Cref{fig:super_ptolemy}). Then
    \begin {itemize}
        \item[$(a)$] $\displaystyle h^i_{j\ell} = h^i_{jk} + h^i_{k\ell} + \Tu^i_{jk} \Tu^i_{k\ell}$
        \item[$(b)$] $\displaystyle h^k_{j\ell} = h^k_{ji} + h^k_{i\ell} + \Tu^k_{ji} \Tu^k_{i\ell}$
    \end {itemize}
\end {prop}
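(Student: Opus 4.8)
The plan is to obtain both parts as elementary consequences of the super Ptolemy relation, after dividing through by the appropriate product of $\lambda$-lengths. First I would fix the labelling of \Cref{fig:super_ptolemy}: the quadrilateral with vertices $i,j,k,\ell$ has diagonal $\lambda_{ik}$, which flips to $\lambda_{j\ell}$; the triangle $ijk$ carries the $\mu$-invariant $\sigma=\boxed{ijk}$ and the triangle $ik\ell$ carries $\theta=\boxed{ik\ell}$. With this dictionary, \Cref{eqn:super_ptolemy_lambda} reads
\[
  \lambda_{ik}\lambda_{j\ell} = \lambda_{i\ell}\lambda_{jk} + \lambda_{ij}\lambda_{k\ell} + \sqrt{\lambda_{ij}\,\lambda_{i\ell}\,\lambda_{jk}\,\lambda_{k\ell}}\;\sigma\theta,
\]
where the order $\sigma\theta$ is the one prescribed by the orientation $i\to k$ of the flipped edge.

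For part $(a)$ I would divide this equation by $\lambda_{ij}\lambda_{ik}\lambda_{i\ell}$. By \Cref{def:h_length} the left-hand side becomes $h^i_{j\ell}$, the first term on the right becomes $h^i_{jk}$, and the second becomes $h^i_{k\ell}$; it then remains only to check that the radical term equals $\Tu^i_{jk}\Tu^i_{k\ell}$, which follows by squaring (reducing to an identity among $\lambda$-lengths) together with the observation that the $\sqrt{h}$-factors of \Cref{def:norm_mu} are even, hence central, so the odd part of the product is exactly $\boxed{ijk}\,\boxed{ik\ell}=\sigma\theta$. Part $(b)$ is the identical computation with $\lambda_{jk}\lambda_{ik}\lambda_{k\ell}$ in place of $\lambda_{ij}\lambda_{ik}\lambda_{i\ell}$: the three rational terms now appear as $h^k_{j\ell}$, $h^k_{ji}$, $h^k_{i\ell}$, and the radical term as $\Tu^k_{ji}\Tu^k_{i\ell}$.

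A slightly cleaner route that avoids the radical bookkeeping is to start from the normalized Ptolemy relation $(1^\star)$ of \Cref{rmk:alternate_ptolemy}: dividing $(1^\star)$ by $\lambda_{ij}\lambda_{i\ell}$ (for $(a)$) or by $\lambda_{jk}\lambda_{k\ell}$ (for $(b)$) turns the two rational terms directly into the desired $h$-lengths, and the cross term $\Td^j_{ik}\Td^\ell_{ik}$ collapses after invoking \Cref{rmk:normalized_mu_relations}$(iv)$ in the two forms $\Td^j_{ik}=\lambda_{ij}\Tu^i_{jk}=\lambda_{jk}\Tu^k_{ij}$ and $\Td^\ell_{ik}=\lambda_{i\ell}\Tu^i_{k\ell}=\lambda_{k\ell}\Tu^k_{i\ell}$. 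Either way there is no genuine obstacle; the only point requiring care is the ordering of the two odd generators, which — because every coefficient involved is even and therefore commutes past the odd variables — reduces to confirming that the super Ptolemy convention places $\boxed{ijk}$ before $\boxed{ik\ell}$ under the orientation $i\to k$, precisely the order in which the normalized $\mu$-invariants are multiplied on the right-hand sides of $(a)$ and $(b)$.
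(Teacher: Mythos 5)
Your proposal is correct and its ``cleaner route'' is exactly the paper's argument: substitute the Ptolemy relation in the form $(1^\star)$ into the definition of $h^i_{j\ell}$, split off $h^i_{jk}$ and $h^i_{k\ell}$, and identify the cross term via \Cref{rmk:normalized_mu_relations}$(iv)$, with part $(b)$ handled analogously. The first route (dividing the raw relation \Cref{eqn:super_ptolemy_lambda} by $\lambda_{ij}\lambda_{ik}\lambda_{i\ell}$, resp.\ $\lambda_{jk}\lambda_{ik}\lambda_{k\ell}$) is the same computation in unnormalized form, and your identification of the ordering $\sigma\theta = \boxed{ijk}\,\boxed{ik\ell}$ dictated by the orientation $i\to k$ is the right point to flag.
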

\begin {proof}
    We will just prove part $(a)$. The calculation for $(b)$ is analogous. By definition, $h^i_{j\ell} = \frac{\lambda_{j\ell}}{\lambda_{ij}\lambda_{i\ell}}$.
    Using the super Ptolemy relation (\Cref{eqn:super_ptolemy_lambda}) and substituting for $\lambda_{j\ell}$, we get
    \[
        h^i_{j\ell} = \frac{\lambda_{ij}\lambda_{k\ell} + \lambda_{jk}\lambda_{i\ell}}{\lambda_{ij}\lambda_{i\ell}\lambda_{ik}} + \frac{\Td^j_{ik} \Td^\ell_{ik}}{\lambda_{ij}\lambda_{i\ell}} 
                    = \frac{\lambda_{k\ell}}{\lambda_{i\ell}\lambda_{ik}} + \frac{\lambda_{jk}}{\lambda_{ij}\lambda_{ik}} + \frac{\Td^j_{ik}}{\lambda_{ij}} \cdot \frac{\Td^\ell_{ik}}{\lambda_{i\ell}}. 
    \]
    By definition, the first two terms are $h^i_{k\ell}$ and $h^i_{jk}$. By \Cref{rmk:normalized_mu_relations}($iv$), the last term is equal to $\Tu^i_{jk} \Tu^i_{k\ell}$.
\end {proof}

\section{Super-Matrices and $\osp(1|2)$} \label{sec:osp}

An $m|n \times m|n$ (even) super-matrix $M$ over a super-algebra can be written as a block matrix of the form
\[
    M = \left( \begin{array}{c|c}
            A    & \Xi \\ \hline
            \Psi & B
        \end{array} \right),
\]
where $A,B$ are $m\times m$, $n\times n$ matrices with even entries, and $\Psi,\Xi$ are $m\times n,n\times m$ matrices with odd entries. 
We follow the convention that Greek letters denote odd variables.
The super-symmetric analogue of the determinant of a matrix, called \emph{Berezinian}, is defined as follows.
\[\ber(M):=\det(B)^{-1}\det(A+ \Xi B^{-1}\Psi)\]
when $B$ is invertible.
Let $A^{\tp}$ denote the transpose of a matrix, the super-transpose of a super-matrix is defined as:
\[
    M^{\st} := \left( \begin{array}{c|c}
                   A^{\tp} & \Psi^{\tp} \\ \hline
                   -\Xi^{\tp} & B^{\tp}
               \end{array} \right).
\]

Consider the set of $2|1\times 2|1$ super matrices over $\calA$. 
\[ M = \ospmatrix {a}{b}{\gamma} {c}{d}{\delta} {\alpha}{\beta}{e} \]

Its Berezinian is given by
$\ber(M)={1\over e}(ad-bc)+{\alpha\over e^2}(d\gamma - b\delta)+{\beta\over e^2}(c\gamma-a\delta)-{2\alpha\beta\gamma\delta\over e^3}.$
Let $J$ denote the following matrix
\[
    J = \left( \begin{array}{cc|c}
        0 & 1 & 0 \\
        -1 & 0 & 0 \\ \hline
        0 & 0 & 1
    \end{array} \right)
\]
The group $\osp(1|2)$ is defined as the set of $2|1\times 2|1$ super-matrices $g$ with $\ber(g)=1$, and satisfying $g^{\st} J g = J$.
These constraints can be written down explicitly in the following system of equations.
\begin{align}
    e      &= 1 + \alpha\beta   \label{eq:osp1} \\
    e^{-1} &= ad - bc           \label{eq:osp2} \\
    \alpha &= c\gamma - a\delta \label{eq:osp3} \\
    \beta  &= d\gamma - b\delta \label{eq:osp4} \\
    \gamma &= a\beta  - b\alpha \label{eq:osp5} \\
    \delta &= c\beta  - d\alpha \label{eq:osp6}
\end{align}
Notice that combining \cref{eq:osp1,eq:osp2} gives us that
\begin{equation}
    ad - bc = 1 - \alpha\beta.
\end{equation}
Cross multiplying \cref{eq:osp3,eq:osp5} or \cref{eq:osp4,eq:osp6} gives us that
\begin{equation} \label{eq:osp-cm}
    \alpha\beta = \gamma\delta.
\end{equation}

\begin {remark} \label{remark:inverse}
    Re-arranging the equation $g^{\mathrm{st}} J g = J$ gives $g^{-1} = J^{-1} g^{\mathrm{st}} J$. Thus if $\ber(g) = 1$,
    then $g \in \osp(1|2)$ if and only if the inverse is given by
    \[ 
        g^{-1} = \ospmatrix {a}{b}{\gamma} {c}{d}{\delta} {\alpha}{\beta}{e}^{-1}
               = \ospmatrix {d}{-b}{-\beta} {-c}{a}{\alpha} {\delta}{-\gamma}{e}
    \]
\end {remark}

Now we define special elements of $\osp(1|2)$ which will be the main ingredients in our matrix formulas in \Cref{sec:Mpaths}.

\begin {definition} \label{def:matrices}
    Let $x$ and $h$ be even variables (with $\sqrt{h}$ well-defined), and $\theta$ an odd variable. Then we define the following matrices:
    \[
            E(x) = \ospmatrix {0}{-x}{0} {1/x}{0}{0} {0}{0}{1}  \quad \quad 
            A(h|\theta) = \ospmatrix {1}{0}{0} {h}{1}{-\sqrt{h}\theta} {\sqrt{h}\theta}{0}{1} \quad \quad
            \rho = \ospmatrix {-1}{0}{0} {0}{-1}{0} {0}{0}{1}
    \]
    Their inverses are given by $\rho^{-1} = \rho$, $E(x)^{-1} = \rho E(x) = E(-x)$, and
    \[ A(h|\theta)^{-1} = \ospmatrix {1}{0}{0} {-h}{1}{\sqrt{h}\theta} {-\sqrt{h}\theta}{0}{1} \]
    If $i,j,k$ are three marked points (vertices of a polyon), then we will almost always use the following shorthand notations:
    \[
        E_{ij} := E(\lambda_{ij}) = \ospmatrix {0}{-\lambda_{ij}}{0} {\lambda_{ij}^{-1}}{0}{0} {0}{0}{1}  \quad \quad
        A^i_{jk} := A\left(h^{i}_{jk}\middle|\boxed{ijk}\right) = \ospmatrix {1}{0}{0} {h^i_{jk}}{1}{-\Tu^i_{jk}} {\Tu^i_{jk}}{0}{1}
    \]
\end{definition}

\begin {remark} \label{rmk:fermionic_reflection}
    The matrix $\rho$ was called ``\emph{fermionic reflection}'' in \cite{pz_19}. 
    Note that we have $\rho A(h|\theta)\rho=A(h|-\theta)$ (i.e. conjugation of $A$ by $\rho$ negates the fermionic variable $\theta$).
    This is easy to see, since left-multiplication by $\rho$ scales the first
    two rows by $-1$, and right-multiplication by $\rho$ scales the first two columns by $-1$. 
    \end {remark}

\begin{remark} \label{rem:osp}
    Observe that $\ber E(x) = \ber A(h|\theta) = \ber \rho = 1$, and that $A(h|\theta)^{-1}$, $E(x)^{-1}$, and $\rho^{-1}$ have the form of \Cref{remark:inverse},
    and so these matrices are in $\osp(1|2)$. 
\end{remark}

\section{A Flat $\osp(1|2)$-Connection} \label{sec:Mpaths}

Following \cite{fg_06, mw13}, from a triangulation $T$ of a marked surface with boundary, we will define a planar graph $\Gamma_T$
and associate certain matrices to the (oriented) edges of the graph, giving a flat $\osp(1|2)$-connection.

\begin {remark}
    Although our main results (\Cref{thm:12-entry} and \Cref{thm:generic}) are stated only for polygons,
    the constructions given below for $\Gamma_T$ and the connection make sense for any triangulated surface.
    For a surface with non-trivial topology, the monodromy of this connection should coincide (up to conjugation)
    with the representation $\pi_1(S) \to \osp(1|2)$ described in section 6 of \cite{pz_19}. The benefit of
    our approach is that we are able to get nontrivial information even in the case of a polygon (where the fundamental group is trivial).
\end {remark}

\begin {definition} \label{def:GammaT}
    Inside each triangle of $T$, there is a hexagonal face of $\Gamma_T$ with three sides parallel to the sides of the triangle.
    When two triangles share a side, the two vertices of $\Gamma_T$ on
    opposite sides of this edge are connected (see \Cref{fig:Gamma_T}).
\end {definition}

\begin {figure}[h]
\centering
\begin {tikzpicture}[scale=0.8]
    \draw[dashed] (-3,0) -- (0,-3) -- (3,0) -- (0,3) -- cycle;
    \draw[dashed] (0,-3) -- (0,3);

    \foreach \x in {-0.3, 0.3} {\foreach \y in {-1.5, 1.5} {
        \draw[fill=black] (\x, \y) circle (0.06);
    }}

    \foreach \x in {-0.7, 0.7} {\foreach \y in {-1.9, 1.9} {
        \draw[fill=black] (\x,\y) circle (0.06);
    }}

    \foreach \x in {-2.2, 2.2} {\foreach \y in {-0.4, 0.4} {
        \draw[fill=black] (\x,\y) circle (0.06);
    }}

    \draw (0.3,-1.5) -- (0.3,1.5) -- (-0.3,1.5) -- (-0.3,-1.5) -- cycle;
    \draw (-0.3,-1.5) -- (-0.7,-1.9) -- (-2.2,-0.4) -- (-2.2,0.4) -- (-0.7,1.9) -- (-0.3,1.5);
    \draw (0.3,-1.5) -- (0.7,-1.9) -- (2.2,-0.4) -- (2.2,0.4) -- (0.7,1.9) -- (0.3,1.5);
\end {tikzpicture}
\caption {The graph $\Gamma_T$, with $T$ in dashed lines.}
\label {fig:Gamma_T}
\end {figure}
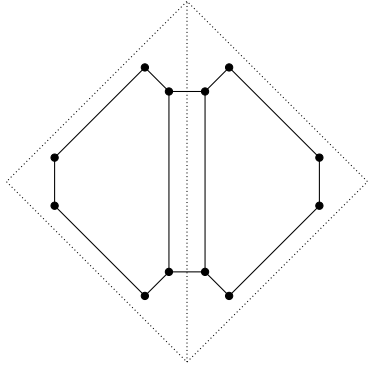

\begin {remark} \label{rmk:Gamma_T_edges_faces}
    The graph $\Gamma_T$ has 3 kinds of edges and 2 kinds of faces. The three types of edges of $\Gamma_T$ are:
    \begin {itemize}
        \item The edges parallel to arcs $\tau$ of the triangulation $T$. (If $\tau \in T$ is a boundary edge,
              then there is only one such edge of $\Gamma_T$, and if $\tau$ is an internal diagonal, then there are two such parallel edges in $\Gamma_T$.)
        \item The edges within a triangle that are \emph{not} prallel to arcs $\tau$ of $T$. (These naturally correspond to the angles
              of the triangles.)
        \item The edges which cross the arcs $\tau$ of $T$.
    \end {itemize}
  
    The two types of faces are as follows:
    \begin {itemize}
        \item Within each triangle of $T$, there is a hexagonal face of $\Gamma_T$.
        \item Surrounding each internal diagonal of $T$, there is a quadrilateral face of $\Gamma_T$.
    \end {itemize}
\end {remark}

\begin {definition}
    For a graph embedded on a surface, a \emph{graph connection} is an assignment of a matrix to each oriented edge,
    such that opposite orientations of the same edge are assigned inverse matrices. For a path in the graph,
    the \emph{holonomy} is the corresponding composition/product of matrices along the path. If the path is a loop,
    then the holonomy is also called \emph{monodromy}. A connection is called \emph{flat} if the monodromy around each contractible face
    is the identity matrix.
\end {definition}

We will now define a flat $\osp(1|2)$-connection on the graph $\Gamma_T$.  

\begin{definition} \label{def:holonomy_matrices}
    Given $(S,M)$ and $T$ with a given orientation, we define the following holonomy matrices for the edges described in \Cref{rmk:Gamma_T_edges_faces}.
    They are pictured in \Cref{fig:holonomy_matrices}.
    \begin {enumerate}
        \item Inside triangle $ijk$, the clockwise orientation of the edge at angle $i$ is assigned the matrix $A(h^i_{jk}|\theta)$.
        \item Inside triangle $ijk$, the clockwise orientation of the edge $ij$ is assigned the matrix $E(\lambda_{ij})$.
        \item For each internal diagonal $ij$, there are two edges of $\Gamma_T$ which cross $ij$. Supposing that the spin structure 
              has orientation $i \to j$, the edge closer to $i$ is assigned the identity matrix, and the edge closer to $j$ is assigned $\rho$ (the fermionic reflection).
    \end {enumerate}
\end{definition}

\begin{figure}[h]
\centering
\begin{tabular}{|c|cc|cc|}
    \hline &&&&\\[-1.2em]
    Type $(i)$ & \begin{tabular}{c} \tikzfig{angle-counter} \end{tabular} & ${A^i_{jk}}^{-1}$ & \begin{tabular}{c} \tikzfig{angle-clock} \end{tabular} & $A^{i}_{jk}$ \\
    & & & & \\[-1.2em] \hline
    & & & & \\[-1.2em]
    Type $(ii)$ & \begin{tabular}{c} \tikzfig{edge-counter} \end{tabular} & $E_{ij}^{-1}$ & \begin{tabular}{c} \tikzfig{edge-clock} \end{tabular} & $E_{ij}$ \\
    & & & & \\[-1.2em] \hline
    & & & & \\[-1.2em]
    Type $(iii)$ & \begin{tabular}{c} \tikzfig{cross-1} \end{tabular} & $\rho$ & \begin{tabular}{c} \tikzfig{cross-2} \end{tabular} & $\id$ \\[1em] \hline
\end{tabular}
\caption {The three types of holonomy matrices.}
\label{fig:holonomy_matrices}
\end{figure}

\begin {prop}
    The holonomy matrices from \Cref{def:holonomy_matrices} define a flat $\osp(1|2)$-connection on $\Gamma_T$.
\end {prop}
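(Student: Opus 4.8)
There are two things to verify: that the assignment takes values in $\osp(1|2)$ (with reversed orientations getting inverse matrices), and that it is flat. The first is immediate from what has already been set up: by \Cref{rem:osp} the matrices $E(\lambda_{ij})$, $A^i_{jk}$, $\rho$ \emph{and} their inverses all lie in $\osp(1|2)$, and $\id \in \osp(1|2)$ trivially; and by construction (see \Cref{fig:holonomy_matrices}) the reverse orientation of each edge of $\Gamma_T$ is assigned exactly the inverse of the matrix on the forward orientation --- $(A^i_{jk})^{-1}$ on a counterclockwise angle edge, $E(\lambda_{ij})^{-1}$ on a counterclockwise side edge, and $\rho^{-1}=\rho$, $\id^{-1}=\id$ on the crossing edges (whose orientation is therefore irrelevant). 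So the connection is $\osp(1|2)$-valued.

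For flatness, by \Cref{rmk:Gamma_T_edges_faces} every contractible face of $\Gamma_T$ is either a hexagon inside a triangle of $T$ or a quadrilateral surrounding an internal diagonal, so there are exactly two monodromy identities to establish. \emph{Hexagon of a triangle $ijk$.} Traversing its boundary clockwise, each of the six edges is crossed in its clockwise orientation, so, alternating the three angle edges with the three side-parallel edges, the monodromy is (up to the cyclic orientation of $i,j,k$)
\[
    A^i_{jk}\; E(\lambda_{ik})\; A^k_{ij}\; E(\lambda_{jk})\; A^j_{ik}\; E(\lambda_{ij}).
\]
The plan is to multiply these $2|1\times 2|1$ matrices out directly using \Cref{def:matrices} and show the product is $\id$. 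Setting the odd variable $\theta=\boxed{ijk}$ to zero, the even $2\times 2$ block telescopes to the identity using only the identities $h^i_{jk}\lambda_{ik}=\lambda_{jk}/\lambda_{ij}$, $\ h^k_{ij}\lambda_{ik}=\lambda_{ij}/\lambda_{jk}$, $\ h^j_{ik}\lambda_{ij}=\lambda_{ik}/\lambda_{jk}$, which are immediate from \Cref{def:h_length}. What remains --- and this is the one genuine computation in the proof --- is to check that the odd entries of the product cancel; here one substitutes the relations among the three normalized $\mu$-invariants $\Tu^i_{jk},\Tu^j_{ik},\Tu^k_{ij}$ recorded in \Cref{rmk:normalized_mu_relations}(ii)--(iv) (they are pairwise proportional, with the \emph{same} $\lambda$-ratios that appeared in the even computation), together with the relation $\Td^j_{ik}=\lambda_{ij}\Tu^i_{jk}$ to convert between the $\Tu$ and $\Td$ normalizations. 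I expect the bookkeeping of signs and of which vertex's $\mu$-invariant appears in which factor to be the main obstacle; conceptually, this hexagon identity is the same information as parts $(a)$,$(b)$ of \Cref{prop:h-lengths_are_additive} together with the super Ptolemy relations, so no new input is needed.

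\emph{Quadrilateral around an internal diagonal $ij$ with spin orientation $i\to j$.} Its four boundary edges are the two copies of the $ij$-parallel edge, one in each adjacent triangle, and the two crossing edges near $i$ and near $j$. A check of orientations shows the two copies of the $ij$-parallel edge, though clockwise with respect to their own triangles, are traversed with \emph{the same} orientation sign when one goes once around the quadrilateral, so together they contribute $E(\lambda_{ij})^{\varepsilon}\cdots E(\lambda_{ij})^{\varepsilon}$ for a common $\varepsilon=\pm 1$; the crossing edges contribute $\rho$ and $\id$ in either orientation. Hence the monodromy is $E(\lambda_{ij})^{\varepsilon}\,\rho\,E(\lambda_{ij})^{\varepsilon}$ (up to a cyclic rotation and an insertion of $\id$), and since one checks directly from \Cref{def:matrices} that $E(x)^2=\rho$ and $\rho^2=\id$, this is $E(\lambda_{ij})^{\varepsilon}\,E(\lambda_{ij})^{2}\,E(\lambda_{ij})^{\varepsilon}=\id$. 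No odd variables enter this case, so it is pure $2\times 2$ bookkeeping.

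Combining the two monodromy computations gives flatness, and with the $\osp(1|2)$-membership already noted, the proposition follows.
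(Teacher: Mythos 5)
Your proposal is correct and follows essentially the same route as the paper: flatness is reduced to the two face types of $\Gamma_T$, the quadrilateral case is handled by the relation $\rho=E(x)^2$ (equivalently $\rho E_{ij}=E_{ij}^{-1}$, which is what the paper uses), and the hexagon case is an alternating product $A\,E\,A\,E\,A\,E$ verified by direct multiplication. The paper likewise leaves that hexagon multiplication as a "straightforward check," so your deferral of the odd-entry bookkeeping matches the level of detail of the original argument.
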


\begin {proof}
    As was mentioned in \Cref{rmk:Gamma_T_edges_faces}, there are only two types of faces in $\Gamma_T$. So we only need to check that these two
    types of monodromy give the identity matrix. Also note that changing the starting point of a cycle changes the monodromy only by conjugation.
    So if we verify that a particular monodromy around a face is the identity, then the same follows for any starting point.

    First let us consider a rectangular face corresponding to a diagonal $ij$ of the triangulation. The (counter-clockwise) monodromy around this face is
    \[ \mathrm{id} \cdot E_{ij} \cdot \rho \cdot E_{ij} \]
    But since $\rho \cdot E_{ij} = E_{ij}^{-1}$, this gives the result.

    Second, we must consider a hexagonal face inside a triangle $ijk$. The (clockwise) monodromy around this face, starting near vertex $i$, is given by
    \[ A^i_{jk} \, E_{ji} \, A^j_{ik} \, E_{kj} \, A^k_{ij} \, E_{ik} \]
    It is straightforward to check that this product is the identity matrix.
\end {proof}

\begin {remark}
    Since the connection is flat, the holonomy between two vertices of $\Gamma_T$ does not depend on the choice of path, since the graph is planar
    and any two paths are homotopic (thought of as paths on the ambient surface).
\end {remark}

\begin {remark}
    Note that the additional data of the spin structure on the triangulation $T$ allows an additional
    elementary step corresponding to $\rho$ that was not present in \cite{fg_06} nor \cite{mw13}.
    However, its inclusion ensures that all monodromies yield the identity matrix (rather than the identity
    matrix up to sign).
\end {remark}

\begin {definition} \label{def:near}
    If vertex $i$ of a polygon is incident to $m$ triangles in $T$, then there are $2m$ vertices of $\Gamma_T$ corresponding to the angles
    of these triangles at $m$. We will say that any of these $2m$ vertices of $\Gamma_T$ are ``\emph{near}'' the vertex $m$.
\end {definition}

\begin {theorem} \label{thm:12-entry}
    Suppose we have a triangulation $T$ of a polygon endowed with an orientation.  Let $i$ and $j$ be two vertices of the polygon, 
    and $i'$ and $j'$ any vertices of $\Gamma_T$ that are near $i$ and $j$, and let $H$
    be the holonomy from $i'$ to $j'$. Then the $(1,2)$-entry of $H$ is equal to $\pm \lambda_{ij}$.
\end {theorem}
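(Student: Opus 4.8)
The plan is to reduce the statement to the computation of a single ``straight-line'' holonomy, and then to evaluate that holonomy by induction on the number of arcs of $T$ that the segment from $i$ to $j$ crosses. For the reduction, first note that since the connection is flat and $\Gamma_T$ is planar in the disc, $H$ is independent of the chosen path, so we may pick a convenient one. By \Cref{def:near}, the $2m$ vertices of $\Gamma_T$ near a boundary vertex $v$ form a single path whose edges alternate between the angle-edges at $v$ and the crossing-edges over the arcs at $v$; hence the holonomy between any two vertices near $v$ is a product of matrices from $\{A(h|\theta)^{\pm1},\rho,\id\}$. Every such matrix has first row $\pm(1,0,0)$ and satisfies $M(0,1,0)^{\tp}=\pm(0,1,0)^{\tp}$, so any product $P$ of them again has first row $\pm(1,0,0)$ and second column $\pm(0,1,0)^{\tp}$. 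Writing $H=P_i\,H_{\mathrm{mid}}\,P_j$ with $P_i,P_j$ such products and $H_{\mathrm{mid}}$ the holonomy between conveniently chosen near-$i$ and near-$j$ vertices, one reads off at once that the $(1,2)$-entry of $H$ equals $\pm$ the $(1,2)$-entry of $H_{\mathrm{mid}}$. It therefore suffices to take $H_{\mathrm{mid}}$ to be the holonomy along the canonical path that runs alongside the straight segment from $i$ to $j$, built from $E$-factors at the sides of $T$ that the segment meets, $A$-factors at the vertices where it turns, and reflections $\rho$ (or $\id$) at the crossings.

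If $i$ and $j$ are joined by an edge of $T$, the segment crosses no arc and the canonical path is the single $\Gamma_T$-edge parallel to $ij$, so $H_{\mathrm{mid}}=E_{ij}^{\pm1}$ and its $(1,2)$-entry is $\mp\lambda_{ij}$; this is the base case. Otherwise, let $\tau$ be the first arc of $T$ crossed, with endpoints $k,\ell$, and let $ik\ell$ and $k\ell s$ be the two triangles containing $\tau$. Flipping $\tau$ yields a triangulation $T'$ in which the segment from $i$ to $j$ crosses one fewer arc, while $\Gamma_T$ and $\Gamma_{T'}$ (and their connections) differ only inside the quadrilateral region around $\tau$. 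Substituting the super Ptolemy expressions for $\lambda_{is}$ and for the two new $\mu$-invariants (in the normalized form of \Cref{rmk:alternate_ptolemy}) and rewriting the corresponding block of the matrix product by means of the identities of \Cref{sec:osp}, notably $\rho A(h|\theta)\rho=A(h|-\theta)$, together with the $h$-length additivity of \Cref{prop:h-lengths_are_additive}, expresses $H_{\mathrm{mid}}$ for $T$ in terms of $H_{\mathrm{mid}}$ for $T'$ and explicit $E$- and $A$-factors whose net effect on the $(1,2)$-entry is precisely the multiplication by $\lambda$-lengths prescribed by \Cref{eqn:super_ptolemy_lambda}. By the inductive hypothesis the $(1,2)$-entry of $H_{\mathrm{mid}}$ for $T'$ is $\pm\lambda_{ij}$, so the same holds for $T$.

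To make this induction close, one cannot track the $(1,2)$-entry alone: the inductive step has to be carried out as an identity of $2|1\times 2|1$ super-matrices, which forces one to determine \emph{all} entries of $H_{\mathrm{mid}}$ as explicit expressions in the $\lambda$-lengths and the normalized $\mu$-invariants $\Tu,\Td$. This stronger statement is exactly \Cref{thm:generic}, and the content of \Cref{sec:Hproof} is to find the correct closed form (one stable under the inductive step) and verify it. I expect the main obstacle to be not the strategy but the bookkeeping of signs --- those introduced by the orientations of the edges of $T$, by the fermionic reflections $\rho$, and by the ambiguous square roots built into $\Tu$ and $\Td$ --- which is also the reason the conclusion can only be asserted up to an overall sign.
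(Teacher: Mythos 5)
Your outline is sound and, at bottom, it is the same proof as the paper's: both reductions hinge on the observation that moving the endpoints among the "near" vertices only multiplies $H$ by matrices of the form $A^{\pm1}$, $\rho$, $\id$ (this is exactly \Cref{lem:ij_independence}), and both recognize that the statement cannot be closed on the $(1,2)$-entry alone but must be strengthened to the full matrix identity of \Cref{thm:generic}, whose inductive verification is a sequence of super Ptolemy relations. Where you differ is in how the induction is organized: you fix the arc $(i,j)$ and induct on the number of crossings by flipping the first crossed arc, whereas the paper fixes the triangulation and builds the holonomy triangle by triangle (two separate inductions, left-multiplication for the first two columns and right-multiplication for the first two rows, after collapsing each fan segment with \Cref{cor:product_of_A_matrices} to reduce to the zig-zag case). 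Your flip-based formulation is conceptually cleaner, but the step you describe as "rewriting the corresponding block of the matrix product" is a genuine lemma --- that a flip changes the connection only by a local identity of $\osp(1|2)$ products compatible with holonomy --- and it is never isolated or proved in your sketch; in the paper its role is played by \Cref{lem:product_of_A_matrices} together with the explicit entry-by-entry Ptolemy verification. The other point you compress into "bookkeeping of signs" is in fact where most of the paper's labor goes: each flip reverses the orientation of one side of the quadrilateral, so tracking the spin structure through your sequence of flips requires either the manual arrow-reversal of \Cref{sec:zig-zag} (negating half the $\mu$-invariants) or the gauge-transformation-by-$\rho$ argument used for non-default orientations. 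So: right strategy, same mechanism as the paper, but the two load-bearing ingredients (the local flip identity and the orientation tracking) are asserted rather than established.
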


We will prove this theorem in the next section. The first step in partially proving this theorem is the following.

\begin {lemma} \label{lem:ij_independence}
    The result of \Cref{thm:12-entry} does not depend on the particular choices of $i'$ and $j'$.
\end {lemma}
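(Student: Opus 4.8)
The plan is to reduce the independence statement to understanding the holonomy matrices that join two vertices of $\Gamma_T$ lying near the \emph{same} polygon vertex. Since the connection is flat and $\Gamma_T$ sits inside the disk $P$, the holonomy $H$ from $i'$ to $j'$ depends only on the endpoints; hence if $i''$ is another vertex near $i$ and $j''$ another near $j$, the holonomy from $i''$ to $j''$ factors as $M_1\,H\,M_2$, where $M_1$ and $M_2$ are, in one order or the other depending on the composition convention, the holonomy from $i''$ to $i'$ along a path staying near $i$ and the holonomy from $j'$ to $j''$ along a path staying near $j$. So it suffices to control the shape of these two ``local'' holonomies.

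The key combinatorial point is that a path in $\Gamma_T$ between two vertices near $i$ can be chosen to use only edges of type $(i)$ (the angle edges at $i$, carrying $A^i_{jk}$ or its inverse) and type $(iii)$ (the edges crossing the diagonals emanating from $i$, carrying $\id$ or $\rho$), never type $(ii)$ edges. I would justify this using \Cref{rmk:Gamma_T_edges_faces}: the $2m$ vertices of $\Gamma_T$ near $i$ form a connected subgraph, with consecutive vertices joined either by the angle-$i$ edge inside a triangle of the fan at $i$, or by a crossing edge in the quadrilateral face surrounding a diagonal at $i$; since in a polygon the triangles incident to $i$ form a linear fan, this subgraph is a path, so any two of its vertices are joined within it. Consequently $M_1$ and $M_2$ are each a product of the matrices $A^i_{jk}$, $(A^i_{jk})^{-1}$, $\id$, $\rho$ from \Cref{def:matrices} (for the appropriate vertex).

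Now I would observe, directly from \Cref{def:matrices}, that each of these elementary $2|1\times 2|1$ matrices has first row $(\pm 1,0,0)$ and second column $(0,\pm 1,0)^{\tp}$ (explicitly: $A(h|\theta)$ and its inverse have first row $(1,0,0)$ and second column $(0,1,0)^{\tp}$; $\rho$ has first row $(-1,0,0)$ and second column $(0,-1,0)^{\tp}$; $\id$ has both $(1,0,0)$ and $(0,1,0)^{\tp}$). Both properties are stable under multiplication: if $X$ has first row $(\varepsilon,0,0)$ then $(XY)_{1k}=\varepsilon\,Y_{1k}$, and dually for second columns. Hence $M_1$ and $M_2$ each have first row $(\pm 1,0,0)$ and second column $(0,\pm 1,0)^{\tp}$, and the lemma follows from the one-line computation
\[
    (M_1\,H\,M_2)_{12} = \sum_{k,l}(M_1)_{1k}\,H_{kl}\,(M_2)_{l2} = (M_1)_{11}\,H_{12}\,(M_2)_{22} = \pm\, H_{12},
\]
so the $(1,2)$-entry is well-defined up to a sign, which is exactly the freedom allowed in \Cref{thm:12-entry}.

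The step I expect to require the most care is the combinatorial claim that a path between two near-$i$ vertices can be taken to stay near $i$ (avoiding type-$(ii)$ edges); this depends on the planar structure of $\Gamma_T$ and on the linear-fan structure of the triangles at a boundary vertex. Everything else is routine bookkeeping with the three elementary matrices — notably, the full $\osp(1|2)$ relations are not needed here, only the explicit entries from \Cref{def:matrices}.
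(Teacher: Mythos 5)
Your proposal is correct and follows essentially the same route as the paper: changing the choice of near vertex amounts to multiplying $H$ on the left or right by a product of the matrices $A^i_{jk}$, $(A^i_{jk})^{-1}$, $\rho$, $\id$, and each such multiplication preserves the $(1,2)$-entry up to sign. Your packaging of the final check via the observation that all these matrices have first row $(\pm 1,0,0)$ and second column $(0,\pm 1,0)^{\tp}$ — a property closed under products — is a clean, systematic version of the case-by-case matrix multiplications the paper performs, and your explicit justification that the near-$i$ vertices are joined by a path of type-$(i)$ and type-$(iii)$ edges fills in a step the paper leaves to a figure.
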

\begin {proof}
    Choosing different $i'$ or $j'$ near the same $i$ and $j$ corresponds to multiplying $H$ (on the right for $i$ and the left for $j$) by 
    a product of matrices of the following types: $A^i_{jk}$, ${A^i_{jk}}^{-1}$, $\rho$, or $\id$.
    Note that we do not need a separate case for $\rho^{-1}$ since $\rho^{-1} = \rho$.
    See \Cref{fig:change_near_vertex} for an illustration of the different cases. In the figure, adding the red edge to the beginning of the blue path
    corresponds to prepending (i.e. right-multiplying) the holonomy by the indicated matrix.

    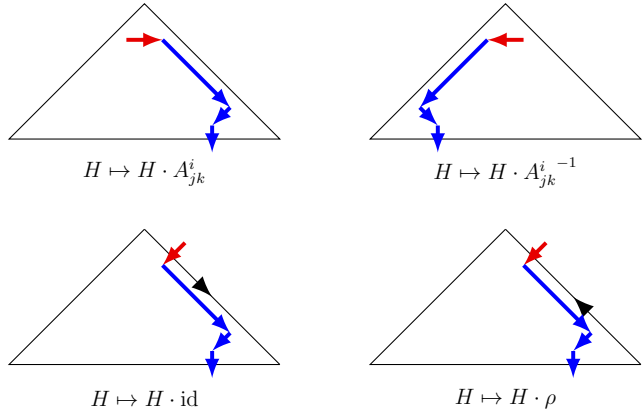
\begin {figure}[h]
    \centering
    \begin {tikzpicture}[scale=0.6, every node/.style={scale = 0.7}]

        \draw (-3,0) -- (3,0) -- (0,3) -- cycle;

        \draw[-latex, red!90!black, line width = 1.5] (-0.4,2.2) -- (0.4,2.2);
        \draw[-latex, blue, line width = 1.5] (0.4,2.2) -- (1.9,0.7);
        \draw[-latex, blue, line width = 1.5] (1.9,0.7) -- (1.5,0.3);
        \draw[-latex, blue, line width = 1.5] (1.5,0.3) -- (1.5,-0.3);

        \draw (0,-0.75) node {$H \mapsto H \cdot A^i_{jk}$};

        \begin {scope}[shift={(8,0)}]
            \draw (-3,0) -- (3,0) -- (0,3) -- cycle;

            \draw[-latex, red!90!black, line width = 1.5] (0.4,2.2) -- (-0.4,2.2);
            \draw[-latex, blue, line width = 1.5] (-0.4,2.2) -- (-1.9,0.7);
            \draw[-latex, blue, line width = 1.5] (-1.9,0.7) -- (-1.5,0.3);
            \draw[-latex, blue, line width = 1.5] (-1.5,0.3) -- (-1.5,-0.3);

            \draw (0,-0.75) node {$H \mapsto H \cdot {A^i_{jk}}^{-1}$};
        \end {scope}

        \begin {scope}[shift={(0,-5)}]
            \draw (0,3) -- (-3,0) -- (3,0);
            \draw[-->-] (0,3) -- (3,0);

            \draw[-latex, red!90!black, line width = 1.5] (0.9,2.7) -- (0.4,2.2);
            \draw[-latex, blue, line width = 1.5] (0.4,2.2) -- (1.9,0.7);
            \draw[-latex, blue, line width = 1.5] (1.9,0.7) -- (1.5,0.3);
            \draw[-latex, blue, line width = 1.5] (1.5,0.3) -- (1.5,-0.3);

            \draw (0,-0.75) node {$H \mapsto H \cdot \id$};
        \end {scope}

        \begin {scope}[shift={(8,-5)}]
            \draw (0,3) -- (-3,0) -- (3,0);
            \draw[-->-] (3,0) -- (0,3);

            \draw[-latex, red!90!black, line width = 1.5] (0.9,2.7) -- (0.4,2.2);
            \draw[-latex, blue, line width = 1.5] (0.4,2.2) -- (1.9,0.7);
            \draw[-latex, blue, line width = 1.5] (1.9,0.7) -- (1.5,0.3);
            \draw[-latex, blue, line width = 1.5] (1.5,0.3) -- (1.5,-0.3);

            \draw (0,-0.75) node {$H \mapsto H \cdot \rho$};
        \end {scope}
    \end {tikzpicture}
    \caption {The different ways to move to another ``near'' vertex.}
    \label {fig:change_near_vertex}
    \end {figure}

    \medskip

    In the third case, multiplying by $\rho$ (on either the left or right) will negate the $(1,2)$-entry.
    For the first and second case, it is easy to see (simply by matrix multiplication) that multiplying on the left or right by $A^i_{jk}$ or ${A^i_{jk}}^{-1}$
    will not change the $(1,2)$-entry.
\end {proof}

\section{Proof of \Cref{thm:12-entry}} \label{sec:Hproof}

This section is devoted to a proof of our main theorem, \Cref{thm:12-entry}. We first state a more detailed version of the main theorem.
Let $T$ be a generic triangulation with default orientation, and with fan centers labelled as $c_i$ for $1\leq i\leq N$. 
Let $(a,b)$ be the longest diagonal in $T$ and denote $a=c_0$ and $b=c_{N+1}$.

\begin{definition} \label{def:H_ab}
    Let $H_{a,b}$ denote the holonomy following a path from a vertex near $a=c_0$ (on the side closer to $c_1$) to a vertex near $b=c_{N+1}$ 
    (on the side closer to $c_N$). We say that the holonomy is of type $\epsilon_a \epsilon_b$ where 
    \begin{align*}
        \epsilon_a &= \begin{cases}
                          0 & \text{ if } (c_0,c_1,c_2) \text{ are oriented clockwise,} \\
                          1 & \text{ otherwise.}
                      \end{cases} \\
        \epsilon_b &= \begin{cases}
                          0 & \text{ if } (c_{N-1},c_N,c_{N+1}) \text{ are oriented clockwise,} \\
                          1 & \text{ otherwise.}
        \end{cases}
    \end{align*}
\end{definition}

\begin{remark}
    Note that given $\epsilon_a$, $\epsilon_b$ is determined by the number of fans $N$ via the relation $\epsilon_a+\epsilon_b= N+1\mod 2$.
\end{remark}

\begin{theorem} \label{thm:generic}
    Let $T$ be a generic triangulation endowed with an arbitrary orientation (based on its spin structure), 
    and with fan centers labelled as $c_i$ for $1\leq i\leq N$ and $a=c_0,b=c_{N+1}$. 
    The holonomy matrix $H_{a,b}$ of type $\epsilon_a \epsilon_b$ is given by 
    \[ 
        H_{a,b} = \ospmatrix
        {-{\lambda_{c_1,c_{N+1}}\over \lambda_{c_0,c_1}}}
        { (-1)^{\epsilon_a} \lambda_{c_0,c_{N+1}} }
        {\Td^{c_{N+1}}_{c_0,c_1} }
        {(-1)^{\epsilon_b}{\l{c_1}{c_N}\over \lambda_{c_0,c_1}\l {c_N}{c_{N+1}}}}
        {(-1)^{\epsilon_a+\epsilon_b -1}{\l{c_0}{c_N}\over\l{c_N}{c_{N+1}}}}
        {(-1)^{\epsilon_b-1}{1\over \l{c_N}{c_{N+1}}}\Td^{c_N}_{c_0,c_1}}
        {{1\over\lambda_{c_0,c_1}}\td {c_1} {c_N}{c_{N+1}}}
        {(-1)^{\epsilon_a-1}\td {c_0}{c_N}{c_{N+1}}} 
        {1+\star}
    \]

    Here the formula for the $(3,3)$-entry (i.e. $1+\star$) can be given two equivalent ways, which (due to \Cref{rem:osp}) 
    follows from applications of both \Cref{eq:osp1} and \Cref{eq:osp-cm}:
    \[ 
        1 + \star = 1 + (-1)^{\epsilon_a-1}{1\over\lambda_{c_0,c_1}}\td {c_1}{c_N}{c_{N+1}}\td {c_0} {c_N}{c_{N+1}}
                  = 1 + (-1)^{\epsilon_b-1}{1\over \l{c_N}{c_{N+1}}}\Td^{c_{N+1}}_{c_0,c_1} \Td^{c_{N}}_{c_0,c_1}.
    \]
\end{theorem}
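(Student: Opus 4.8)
\emph{Strategy and fan collapse.} The plan is to prove the displayed matrix first for the default orientation, then deduce the general case, and to use throughout that $H_{a,b}\in\osp(1|2)$ (it is a product of matrices $E_{ij}^{\pm1}$, $(A^i_{jk})^{\pm1}$, $\rho$, all lying in $\osp(1|2)$ by \Cref{rem:osp}). The first move is to reduce to a zig-zag triangulation: inside a fan segment centered at a vertex $v$ with successive outer vertices $u_0,u_1,\dots,u_k$, the portion of the canonical path contributes $E(\lambda_{v,u_0})^{\pm1}$, then one matrix $(A^v_{u_{i-1}u_i})^{\pm1}$ for each triangle (the angle at $v$) separated by the $\id/\rho$ crossing matrices of \Cref{def:holonomy_matrices} at $(v,u_1),\dots,(v,u_{k-1})$, then $E(\lambda_{v,u_k})^{\pm1}$. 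The consecutive $A^v$-matrices telescope: the $(2,1)$-entry of a product of two of them is controlled by \Cref{prop:h-lengths_are_additive}, and the $(3,1)$- and $(2,3)$-entries by the additivity of normalized $\mu$-invariants in the starred super Ptolemy relations of \Cref{rmk:alternate_ptolemy}. So the whole fan contributes (up to one factor $\rho$, pushed to an end and absorbed into the sign conventions via \Cref{rmk:fermionic_reflection}) a product of $E(\lambda_{v,u_0})^{\pm1}$, $(A^v_{u_0u_k})^{\pm1}$, $E(\lambda_{v,u_k})^{\pm1}$, where $\lambda_{u_0,u_k}$ and $\boxed{vu_0u_k}$ are the $\lambda$-length and $\mu$-invariant of the chord $(u_0,u_k)$ produced by iterated super Ptolemy --- genuine elements of $\calA$. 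Hence the contribution of each fan coincides with that of its collapsed single triangle, and we may assume $T$ is the zig-zag triangulation whose triangles are $\Delta_m = c_{m-1}c_mc_{m+1}$ $(1\le m\le N)$, glued along the internal diagonals $(c_m,c_{m+1})$.

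\emph{Induction on $N$.} I would then run a one-triangle-at-a-time induction, carrying a closed form for the partial holonomy $G_m$ from the starting vertex near $c_0$ to the vertex of $\Gamma_T$ just past the angle at $c_m$ in $\Delta_m$; it satisfies a recursion of the form $G_m = (A^{c_m}_{c_{m-1}c_{m+1}})^{\pm1}\,E(\lambda_{c_{m-1},c_m})^{\pm1}\,C_{m-1}\,G_{m-1}$, where $C_{m-1}$ is the $\id$ or $\rho$ crossing matrix at $(c_{m-1},c_m)$, together with $G_1 = (A^{c_1}_{c_0c_2})^{\pm1}E(\lambda_{c_0,c_1})^{\pm1}$ and $H_{a,b} = E(\lambda_{c_N,c_{N+1}})^{\pm1}G_N$. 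The base case is a direct $2|1\times 2|1$ matrix computation; in it several entries of the claimed matrix involve formally degenerate symbols such as $\lambda_{c_1,c_1}$ and $\Td$'s of degenerate triangles, to be read as $0$, and the explicit product indeed returns $0$ in those positions. For the inductive step one inserts the closed form for $G_{m-1}$, multiplies the super-matrices out, tracks the sign incurred each time two odd variables are transposed, and then applies the super Ptolemy relation \Cref{eqn:super_ptolemy_lambda} for the newly created chord together with the normalized-$\mu$ identities of \Cref{rmk:normalized_mu_relations,rmk:alternate_ptolemy}: all dependence on $c_{m-1}$ cancels and the closed form for $G_m$ (hence, for $m=N$ composed with the final $E$, for $H_{a,b}$) results, with the $\id$-versus-$\rho$ choice at each crossing being precisely what flips the parity $\epsilon_a+\epsilon_b\bmod 2$ by one and supplies the signs $(-1)^{\epsilon_a}$, $(-1)^{\epsilon_b}$.

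\emph{The $(3,3)$-entry and arbitrary orientation.} Once the eight entries of the first two rows and columns are in hand, nothing is left to compute: since $H_{a,b}\in\osp(1|2)$, \Cref{eq:osp1} forces the $(3,3)$-entry to be $1$ plus the product of the $(3,1)$- and $(3,2)$-entries, which is the first form of $1+\star$ in the statement, and \Cref{eq:osp-cm} ($\alpha\beta=\gamma\delta$) rewrites this as $1$ plus the product of the $(1,3)$- and $(2,3)$-entries, which is the second form. Finally, passing from the default orientation to an arbitrary one reverses the orientation of some internal diagonals, which by \Cref{def:holonomy_matrices} only swaps the $\id$ and $\rho$ labels on the two edges of $\Gamma_T$ crossing such a diagonal; along the path this inserts or conjugates by $\rho$'s, which by \Cref{rmk:fermionic_reflection} changes exactly the signs recorded by $\epsilon_a$ and $\epsilon_b$ --- so the general statement follows from the default case.

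\emph{The hard part.} The genuine work is the inductive step: expanding the super-matrix product $E(\lambda_{c_N,c_{N+1}})^{\pm1}(A^{c_N}_{c_{N-1}c_{N+1}})^{\pm1}E(\lambda_{c_{N-1},c_N})^{\pm1}\,C_{N-1}\,G_{N-1}$ entry by entry, matching the cancellation of the $c_{N-1}$-dependence against the exact super Ptolemy and normalized-$\mu$ identities, and --- the most error-prone point --- keeping the two independent sources of signs (reorderings of odd variables, and the $(-1)^{\epsilon}$'s carried by the crossing matrices) consistent for every parity of $N$. Making the fan-collapse reduction precise, and choosing the base case so as to sidestep the degenerate symbols $\lambda_{c_i,c_i}$, are the secondary difficulties.
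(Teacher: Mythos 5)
Your overall strategy --- collapse fans via the telescoping of $A$-matrices, reduce to a zig-zag triangulation, induct one triangle at a time using the super Ptolemy relations to recognize the entries, obtain the $(3,3)$-entry for free from the $\osp(1|2)$ relations, and handle non-default orientations by inserting $\rho$'s --- is the same as the paper's. The one structural difference is that the paper runs \emph{two} complementary inductions in the zig-zag case rather than your single one: an induction by left multiplication (appending matrices at the $c_{N+1}$ end, along the late path) that establishes only the first two \emph{columns}, and a separate induction by right multiplication (appending at the $c_0$ end, along the early path) that establishes only the first two \emph{rows}; together these give eight entries and \Cref{eq:osp1} gives the ninth. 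This split is not cosmetic. In your single recursion $G_m=(A^{c_m}_{c_{m-1}c_{m+1}})^{\pm1}E^{\pm1}C_{m-1}G_{m-1}$, each new column depends only on the corresponding old column, so to close the induction on the third column you must push the \emph{entire} old third column --- including the $(3,3)$-entry $1+\star$ --- through the product. Since $(A^{c_m}_{c_{m-1}c_{m+1}})_{2,3}=-\Tu^{c_m}_{c_{m-1}c_{m+1}}\neq 0$, the new $(2,3)$-entry acquires a term $\Tu\cdot(1+\star)$ involving a product of three odd variables, and verifying that everything collapses to $\pm\frac{1}{\lambda_{c_m,c_{m+1}}}\Td^{c_m}_{c_0,c_1}$ requires $\mu$-Ptolemy identities for the ``virtual'' triangles $(c_0,c_1,c_m)$ in addition to the relations you list. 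The paper's design exists precisely to avoid this: each of its two inductions only ever manipulates entries that transform by the clean starred relations of \Cref{rmk:alternate_ptolemy}. Your route can be completed, but the third-column bookkeeping is substantially harder than your sketch suggests.

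Two further points where the paper is more careful than your sketch. First, in the left-multiplication induction the flips implicitly performed run from the $c_{N+1}$ end toward $c_0$, and with the default orientation such flips \emph{do} re-orient not-yet-flipped diagonals; the paper first applies the spin-structure equivalence (reversing all arrows of every even-numbered triangle and negating its $\mu$-invariant) so that the top-to-bottom flip sequence leaves the remaining arrows alone. Without this normalization the sign-tracking in your inductive step will go wrong. Second, your claim that passing to an arbitrary orientation ``changes exactly the signs recorded by $\epsilon_a$ and $\epsilon_b$'' is too coarse: reversing diagonals in the \emph{interior} of the path does not touch $\epsilon_a,\epsilon_b$ at all. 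The paper handles this with gauge transformations by $\rho$ at the six $\Gamma_T$-vertices of the affected hexagonal face, showing that a triangle-reversal amounts to $\theta\mapsto-\theta$ in the corresponding $A$-matrix (i.e., the formula holds after renaming the odd variables according to the spin-structure equivalence), with an extra left or right factor of $\rho$ only when the reversed triangle contains the start or end of the path.
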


We begin the proof of Theorem \ref{thm:generic} by considering the special case of a fan triangulation with default orientation.  
Without loss of generality, we will assume the fan has vertices labeled by $1, 2, \dots, n$ in cyclic order.
In particular, there is one non-trivial fan center, but including the endpoints of the longest arc, we have $a=c_0=2$, $c_1=1$, and $b=c_2=c_{N+1}=n$.  
We recover that the holonomy $H_{2n}$ can only be type $00$ or $11$, see \Cref{fig:single-fan}.

\subsection{Fan Triangulation}

The next two results (\Cref{lem:product_of_A_matrices} and \Cref{cor:product_of_A_matrices}) compute
the holonomy of a path which stays near a fan center, and traverses over all the angles in a fan segement.

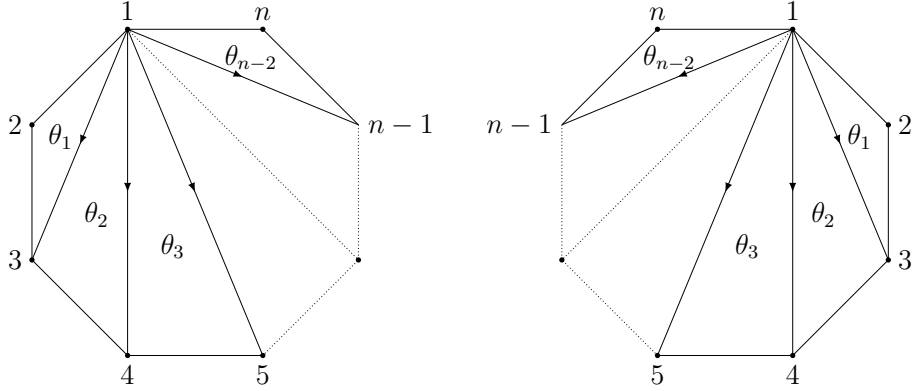
\begin{figure}[h]
    \scalebox{0.9}{\begin{tikzpicture}[decoration={
    markings,
    mark=at position 0.5 with {\arrow{latex}}}] 

\tikzstyle{every path}=[draw] 
		\path
    node[
      regular polygon,
      regular polygon sides=8,
      draw=none,
      inner sep=1.7cm,
    ] (T) {}
    %
    (T.corner 1) node[above] {$n$}
    (T.corner 2) node[above] {$1$}
    (T.corner 3) node[left]  {$2$}
    (T.corner 4) node[left]  {$3$}
    (T.corner 5) node[below] {$4$}
    (T.corner 6) node[below] {$5$}
    (T.corner 7) node[right] {}
    (T.corner 8) node[right] {$n-1$}
;

\draw [-] (T.corner 6) to (T.corner 5) to (T.corner 4) to (T.corner 3) to (T.corner 2) to (T.corner 1) to (T.corner 8);
\draw [dashed] (T.corner 8) to (T.corner 7) to (T.corner 6);
\foreach \i in {4,5,6,8}{
    \draw [postaction={decorate}]  (T.corner 2) to (T.corner \i);
}
\draw [dashed] (T.corner 2) to (T.corner 7);

\foreach \x in {1,2,...,7}{
\draw (T.corner \x) node [fill,circle,scale=0.2] {};}

\coordinate (m1) at ($0.3*(T.corner 2)+0.3*(T.corner 4)+0.4*(T.corner 3)$);
\coordinate (m2) at ($0.33*(T.corner 5)+0.33*(T.corner 4)+0.33*(T.corner 2)$);
\coordinate (m3) at ($0.33*(T.corner 5)+0.33*(T.corner 2)+0.33*(T.corner 6)$);
\coordinate (m4) at ($0.33*(T.corner 7)+0.33*(T.corner 2)+0.33*(T.corner 6)$);
\coordinate (m5) at ($0.33*(T.corner 7)+0.33*(T.corner 2)+0.33*(T.corner 8)$);
\coordinate (m6) at ($0.4*(T.corner 1)+0.3*(T.corner 2)+0.3*(T.corner 8)$);

\node at (m1) {$\theta_1$};
\node at (m2) {$\theta_2$};
\node at (m3) {$\theta_3$};
\node at (m6) {$\theta_{n-2}$};

\end{tikzpicture}
\;\;
\begin{tikzpicture}[decoration={
    markings,
    mark=at position 0.5 with {\arrow{latex}}}
    ] 

\tikzstyle{every path}=[draw] 
		\path
    node[
      regular polygon,
      regular polygon sides=8,
      draw=none,
      inner sep=1.7cm,
    ] (T) {}
    %
    (T.corner 1) node[above] {$1$}
    (T.corner 2) node[above] {$n$}
    (T.corner 3) node[left]  {$n-1$}
    (T.corner 4) node[left]  {}
    (T.corner 5) node[below] {$5$}
    (T.corner 6) node[below] {$4$}
    (T.corner 7) node[right] {$3$}
    (T.corner 8) node[right] {$2$}
;

\coordinate (p1) at (T.corner 2);
\coordinate (p2) at (T.corner 1);
\coordinate (p3) at (T.corner 8);
\coordinate (p4) at (T.corner 7);
\coordinate (p5) at (T.corner 6);
\coordinate (p6) at (T.corner 5);
\coordinate (p7) at (T.corner 4);
\coordinate (p8) at (T.corner 3);

\draw [-] (p6) to (p5) to (p4) to (p3) to (p2) to (p1) to (p8);
\draw [dashed] (p8) to (p7) to (p6);
\foreach \i in {4,5,6,8}{
    \draw [postaction={decorate}]  (p2) to (p\i);
}
\draw [dashed] (p2) to (p7);

\foreach \x in {1,2,...,7}{
\draw (p\x) node [fill,circle,scale=0.2] {};}

\coordinate (m1) at ($0.3*(p2)+0.3*(p4)+0.4*(p3)$);
\coordinate (m2) at ($0.33*(p5)+0.33*(p4)+0.33*(p2)$);
\coordinate (m3) at ($0.33*(p5)+0.33*(p2)+0.33*(p6)$);
\coordinate (m4) at ($0.33*(p7)+0.33*(p2)+0.33*(p6)$);
\coordinate (m5) at ($0.33*(p7)+0.33*(p2)+0.33*(p8)$);
\coordinate (m6) at ($0.4*(p1)+0.3*(p2)+0.3*(p8)$);

\node at (m1) {$\theta_1$};
\node at (m2) {$\theta_2$};
\node at (m3) {$\theta_3$};
\node at (m6) {$\theta_{n-2}$};

\end{tikzpicture}\quad\quad}
    \caption{Single fan triangulation. Left: type 00. Right: type 11.}
    \label{fig:single-fan}
\end{figure}

\begin {lemma} \label{lem:product_of_A_matrices}
    Suppose $i,j,k,\ell$ are vertices of a quadrilateral in counter-clockwise order, 
    and the (oriented) triangulation contains the edge $i \to k$ (as in \Cref{fig:super_ptolemy}).
    Then the product of $A$-matrices is 
    \begin {itemize}
        \item[$(a)$] $A^i_{k\ell} \, A^i_{jk} = A^i_{j\ell}$
        \item[$(b)$] $A^k_{ij} \, \rho \, A^k_{i\ell} = A^k_{j\ell} \, \rho$
    \end {itemize}
\end {lemma}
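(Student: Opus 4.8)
The plan is to verify both identities by direct matrix multiplication, using the explicit forms of the $A$-matrices and $\rho$ from \Cref{def:matrices}, and then simplifying the resulting entries using the relations among $h$-lengths and normalized $\mu$-invariants from \Cref{rmk:normalized_mu_relations} together with the super Ptolemy relations in the form of \Cref{rmk:alternate_ptolemy} and \Cref{prop:h-lengths_are_additive}.

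For part $(a)$, I would write out $A^i_{k\ell} A^i_{jk}$ where $A^i_{jk} = A(h^i_{jk} | \boxed{ijk})$ has the lower-triangular-ish shape $\ospmatrix{1}{0}{0}{h^i_{jk}}{1}{-\Tu^i_{jk}}{\Tu^i_{jk}}{0}{1}$, and similarly for $A^i_{k\ell}$. Because both matrices fix the first row and are supported in a compatible way, the product should again have the form $A(H | \Theta)$ for some even $H$ and odd $\Theta$; matching the $(2,1)$-entry gives $H = h^i_{k\ell} + h^i_{jk} + (\text{odd}\cdot\text{odd})$, and matching the $(3,1)$-entry gives $\Theta = \Tu^i_{k\ell} + \Tu^i_{jk}$ (possibly up to checking a sign coming from the odd--odd product in the $(2,3)$-entry). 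The key inputs are then: \Cref{prop:h-lengths_are_additive}$(a)$, which says exactly $h^i_{j\ell} = h^i_{jk} + h^i_{k\ell} + \Tu^i_{jk}\Tu^i_{k\ell}$, so the $(2,1)$-entry becomes $h^i_{j\ell}$; and \Cref{eqn:super_ptolemy_mu_left_star}, which says $\Tu^i_{j\ell} = \Tu^i_{jk} + \Tu^i_{k\ell}$, so $\Theta = \Tu^i_{j\ell}$. One must also confirm that the remaining entries (the $(3,3)$-entry, which picks up a product of two odd variables, and the various zero entries) match those of $A^i_{j\ell}$; the $(3,3)$-entry identity $1 + \Tu^i_{jk}\Tu^i_{k\ell} = 1 + \Tu^i_{j\ell}(\ldots)$-type relation should follow from \Cref{eq:osp1}/\Cref{eq:osp-cm} since all matrices involved lie in $\osp(1|2)$.

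For part $(b)$, I would similarly compute $A^k_{ij}\,\rho\,A^k_{i\ell}$ and $A^k_{j\ell}\,\rho$. By \Cref{rmk:fermionic_reflection}, conjugating by $\rho$ negates the fermionic variable, so $\rho\,A^k_{i\ell} = A(h^k_{i\ell}|-\boxed{ik\ell})\,\rho$, which lets me move the $\rho$ to the right and reduce to a product of two $A$-matrices (with one sign flipped) times $\rho$. Then the same bookkeeping as in part $(a)$ applies, now invoking \Cref{prop:h-lengths_are_additive}$(b)$ for the even entry and \Cref{eqn:super_ptolemy_mu_right_star} (which has a minus sign: $\Tu^k_{j\ell} = \Tu^k_{ij} - \Tu^k_{i\ell}$) for the odd entry — the minus sign here should be precisely accounted for by the sign flip introduced when commuting $\rho$ past $A^k_{i\ell}$.

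The main obstacle is purely the careful sign/ordering bookkeeping for the odd (fermionic) variables: one must track the order in which the $\mu$-invariants $\boxed{ijk}$, $\boxed{ik\ell}$, $\boxed{ij\ell}$, $\boxed{jk\ell}$ appear (since they anticommute), make sure the orientation conventions of the flipped edge in \Cref{fig:super_ptolemy} match the sign conventions in \Cref{def:matrices} for which orientation gets $A$ versus $A^{-1}$, and confirm that the $\sqrt{h}$ factors hidden inside $\Tu$ combine correctly via \Cref{rmk:normalized_mu_relations}. None of these steps is deep, but getting every sign right is where the real work lies; everything else is a short computation that bottoms out in \Cref{prop:h-lengths_are_additive} and \Cref{rmk:alternate_ptolemy}.
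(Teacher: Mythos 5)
Your proposal is correct and follows essentially the same route as the paper: direct multiplication of the explicit $A$-matrices, identifying the $(2,1)$-entry via \Cref{prop:h-lengths_are_additive} and the odd entries via \Cref{eqn:super_ptolemy_mu_left_star} (resp. \Cref{eqn:super_ptolemy_mu_right_star}), and for part $(b)$ using $\rho A(h|\theta)\rho = A(h|-\theta)$ to reduce to a product of two $A$-matrices. The only minor remark is that your worry about the $(3,3)$-entry is unnecessary — it is exactly $1$, with no odd--odd contribution, since the $(3,2)$- and $(2,3)$-positions of the factors never pair up.
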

\begin {proof}
    $(a)$ Let $\theta = \boxed{ik\ell}$ and $\sigma = \boxed{ijk}$. 
    The matrix product gives
    \[ 
        \ospmatrix 
        {1}{0}{0} 
        {h^i_{k\ell}+h^i_{jk}-\Tu^i_{k\ell} \Tu^i_{jk}}{1}{- \Tu^i_{jk} - \Tu^i_{k\ell} } 
        {\Tu^i_{k\ell} + \Tu^i_{jk} }{0}{1} 
        =
        \ospmatrix 
        {1}{0}{0} 
        {h^i_{jk}+h^i_{k\ell} + \Tu^i_{jk} \Tu^i_{k\ell}}{1}{-\left( \Tu^i_{jk} + \Tu^i_{k\ell} \right)} 
        {\Tu^i_{jk} + \Tu^i_{k\ell}}{0}{1}.
    \]
    By \Cref{prop:h-lengths_are_additive}(a), the $(2,1)$-entry is equal to $h^i_{j\ell}$. By 
    \Cref{eqn:super_ptolemy_mu_left_star},
    the $(2,3)$- and $(3,1)$-entries are $-\Tu^i_{j\ell}$ and $\Tu^i_{j\ell}$, respectively.

    $(b)$ Recall from \Cref{rmk:fermionic_reflection} that $\rho A(h|\theta) \rho = A(h|-\theta)$. 
    By right-multiplying the equation in part $(b)$ by $\rho$,
    the claim is equivalent to 
    \[ A^k_{ij} \, A(h^k_{i\ell} | -\theta) = A^k_{j\ell} .\]
    This matrix product is equal to
    \[ 
        \ospmatrix 
        {1}{0}{0} 
        {h^k_{ij}+h^k_{i\ell} + \Tu^k_{ij} \Tu^k_{i\ell}}{1}{\Tu^k_{i\ell} - \Tu^k_{ij}} 
        {\Tu^k_{ij} - \Tu^k_{i\ell}}{0}{1}.
    \]
    By \Cref{prop:h-lengths_are_additive}(b) and \Cref{eqn:super_ptolemy_mu_right_star},
    the $(2,1)$-entry is $h^k_{j\ell}$ and the $(2,3)$- and $(3,1)$\nobreakdash-entries
    are $- \Tu^k_{j\ell}$ and $\Tu^k_{j\ell}$, respectively.
\end {proof}

\begin {corollary} \label{cor:product_of_A_matrices}
    Consider a single fan triangulation with default orientation, as depicted in \Cref{fig:single-fan}. 
    The ordered product of all $A$-matrices is
    \begin{equation} \label{eq:fan-angle-formula} 
        A^1_{n-1,n} \cdots A^1_{34} \, A^1_{23} = A^1_{2n}
    \end{equation}
    if the holonomy is type 00, and
    \begin{equation} \label{eq:fan-angle-formula} 
        {A^1_{n-1,n}}^{-1} \cdots {A^1_{34}}^{-1} \, {A^1_{23}}^{-1} = {A^1_{2n}}^{-1}
    \end{equation}
    if the holonomy is type 11.
\end {corollary}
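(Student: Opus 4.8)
The plan is to prove both identities by induction on $n$, the number of marked points of the fan, with \Cref{lem:product_of_A_matrices}(a) supplying the inductive step. It is convenient to regard $\lambda_{2n}$, $h^1_{2n}=\lambda_{2n}/(\lambda_{12}\lambda_{1n})$ and $\boxed{12n}$ as the (formal) quantities produced from the super Ptolemy relations by collapsing the fan, so that \Cref{prop:h-lengths_are_additive} and \Cref{lem:product_of_A_matrices} apply to the resulting ``virtual'' triangles verbatim.

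For the type $00$ case, the base case $n=3$ is immediate: the fan then has a single center angle, so the product is $A^1_{23}=A^1_{2n}$. For the inductive step, observe that the product of $A$-matrices for the $(n{+}1)$-vertex fan is $A^1_{n,n+1}$ times the product for the $n$-vertex fan, which by the inductive hypothesis equals $A^1_{n,n+1}A^1_{2n}$. Now apply \Cref{lem:product_of_A_matrices}(a) with $(i,j,k,\ell)=(1,2,n,n+1)$: these four vertices occur in counter-clockwise cyclic order on the polygon, and the default orientation directs the edge $1\to n$ (it points away from the fan center), so the edge $1$-$n$ separates the virtual triangle $1$-$2$-$n$ from $1$-$n$-$(n{+}1)$ exactly as in \Cref{fig:super_ptolemy}. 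The lemma gives $A^1_{n,n+1}A^1_{2n}=A^1_{2,n+1}$, closing the induction. (Equivalently, one collapses adjacent pairs one at a time, telescoping $A^1_{n-1,n}\cdots A^1_{34}A^1_{23}$ down to $A^1_{2n}$ by $n-3$ applications of the lemma.)

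For the type $11$ case, the triangulation carries the same $\lambda$-lengths and $\mu$-invariants, but the ambient orientation is reversed (now $(c_0,c_1,c_2)$ reads counter-clockwise), so by \Cref{def:holonomy_matrices} the holonomy crosses each center angle-edge in the opposite sense. Running the same induction --- now with the version of \Cref{lem:product_of_A_matrices}(a) adapted to the reflected quadrilateral, whose proof is the same brief $2|1\times 2|1$ matrix computation via \Cref{prop:h-lengths_are_additive} and, for tracking signs of $\mu$-invariants, \Cref{rmk:fermionic_reflection} --- yields ${A^1_{n-1,n}}^{-1}\cdots{A^1_{23}}^{-1}={A^1_{2n}}^{-1}$.

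I expect the algebra to be routine: the relevant super-matrix products are short and already packaged in \Cref{lem:product_of_A_matrices}, so the genuine obstacle is the orientation and sign bookkeeping. One must check against \Cref{def:holonomy_matrices} and \Cref{fig:single-fan} that in each type the holonomy path crosses the center angle-edges in the order $\boxed{123},\boxed{134},\dots,\boxed{1,n-1,n}$ and in the stated (clockwise, resp. counter-clockwise) sense, and that the virtual triangle $1$-$2$-$n$ obtained by collapsing the fan is precisely the one indexing the right-hand side; pinning down these conventions consistently with the super Ptolemy sign rule is where the care lies.
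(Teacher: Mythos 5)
Your proof is correct and takes essentially the same route as the paper: a telescoping induction driven by \Cref{lem:product_of_A_matrices}(a), with the ``virtual'' triangle $(1,2,k)$ made precise by noting that the intermediate flips of the arcs $(1,3),(1,4),\dots$ produce a smaller fan still carrying the default orientation (so the arc $1\to k$ keeps the orientation the lemma requires). The only cosmetic differences are that the paper combines the two right-most factors at each step rather than inducting on $n$ from the outside, and that your ``reflected'' lemma for the type-$11$ case needs no new matrix computation --- it is just the inverse of \Cref{lem:product_of_A_matrices}(a) after relabelling the (mirror-image) quadrilateral and using the symmetry $A^i_{jk}=A^i_{kj}$.
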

\begin {proof}
    This follows from \Cref{lem:product_of_A_matrices} by induction. The base case of two triangles is simply \Cref{lem:product_of_A_matrices}.
    In general, if we first multiply the two right-most factors, they combine to give $A^1_{24}$. After performing
    the associated flip on the arc $(1,3)$, we now have a smaller polygon (on the vertices $1,2,4,5,\dots, n-1,n$ in counter-clockwise order), 
    again with a fan triangulation and the default orientation. 
    For $k\geq 4$, after $(k-3)$ such steps, we have multiplied together the $(k-2)$ right-most factors into $A^1_{2k}$ 
    and have flipped the arcs $(1,3)$, $(1,4)$, $\dots$, $(1,k-1)$ in order, resulting again in a smaller polygon, this time on the vertices $1,2,k,k+1,\dots, n-1,n$, with a 
    fan triangulation and the default orientation.  
    So the result follows by induction.
\end{proof}

The special case of \Cref{thm:generic} (and hence of \Cref{thm:12-entry}) for a fan triangulation is the following.  

\begin {theorem} \label{thm:single-fan}
    Consider a fan triangulation with default orientation (as in \Cref{fig:single-fan}). 
    The holonomy $H_{2n}$, of type $\epsilon \epsilon$, is given by
    \[ H_{2n} = \ospmatrix {-\frac{\lambda_{1n}}{\lambda_{12}}}{(-1)^{\epsilon}\lambda_{2n}}{\Td^n_{12}} {0}{-\frac{\lambda_{12}}{\lambda_{1n}}}{0} {0}{-\Td^2_{1n}}{1}\]
    In particular, the $(1,2)$-entry is equal to $\pm \lambda_{2n}$.
\end {theorem}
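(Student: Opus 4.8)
The plan is to identify the holonomy $H_{2n}$ with an explicit short word in the elementary matrices of \Cref{def:matrices} and then multiply it out. First, by \Cref{lem:ij_independence} we may choose the vertices $i'$ near $2$ and $j'$ near $n$ however is most convenient. Since $2$ lies only in the triangle $(1,2,3)$ and $n$ only in $(1,n-1,n)$, we take $i'$ to be the endpoint, on the side of $2$, of the hexagon edge of $\Gamma_T$ parallel to the arc $(1,2)$, and similarly $j'$ the endpoint on the side of $n$ of the hexagon edge parallel to $(1,n)$. With these choices the holonomy can be computed along the path in $\Gamma_T$ that ``hugs'' the fan center $1$.

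Next I would trace that path and record the matrix of each edge it crosses, using \Cref{def:holonomy_matrices}. The path starts by crossing the edge parallel to $(1,2)$, contributing $E_{12}$ if the holonomy has type $00$ and $E_{12}^{-1}$ if it has type $11$ (reflecting the fan, as in the two pictures of \Cref{fig:single-fan}, interchanges the two planar orientations, hence the two choices in convention (2) of \Cref{def:holonomy_matrices}). It then runs through the triangles $(1,2,3),(1,3,4),\dots,(1,n-1,n)$ in order: inside $(1,k,k+1)$ it traverses the angle-$1$ edge, contributing $A^1_{k,k+1}$ (type $00$) or its inverse (type $11$) by convention (1), and between consecutive triangles it crosses the internal diagonal $(1,k+1)$ along the crossing edge nearest the center $1$; since the default orientation points $1\to k+1$ away from the center, convention (3) assigns that edge the identity. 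Finally it crosses the edge parallel to $(1,n)$, contributing $E_{1n}$ or $E_{1n}^{-1}$. Composing these along the path yields
\[
    H_{2n}=E_{1n}\,\bigl(A^1_{n-1,n}\cdots A^1_{34}\,A^1_{23}\bigr)\,E_{12}
\]
for type $00$, and the same product with every factor replaced by its inverse for type $11$.

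Now \Cref{cor:product_of_A_matrices} collapses the middle bracket to $A^1_{2n}$ (respectively ${A^1_{2n}}^{-1}$), so the computation reduces to
\[
    H_{2n}=E_{1n}\,A^1_{2n}\,E_{12}\ \ (\text{type }00), \qquad H_{2n}=E_{1n}^{-1}\,{A^1_{2n}}^{-1}\,E_{12}^{-1}\ \ (\text{type }11).
\]
Substituting the explicit matrices from \Cref{def:matrices} with $h^1_{2n}=\lambda_{2n}/(\lambda_{12}\lambda_{1n})$ and multiplying out (a two-line calculation in $\osp(1|2)$), the even part of $H_{2n}$ comes out with diagonal $-\lambda_{1n}/\lambda_{12}$, $-\lambda_{12}/\lambda_{1n}$, $1$ and upper entry $(-1)^{\epsilon}\lambda_{2n}$; for the odd entries one rewrites $\lambda_{1n}\,\Tu^1_{2n}=\Td^n_{12}$ and $\lambda_{12}\,\Tu^1_{2n}=\Td^2_{1n}$ (both immediate from \Cref{def:norm_mu}) to get the $(1,3)$-entry $\Td^n_{12}$ and the $(3,2)$-entry $-\Td^2_{1n}$; the $(3,1)$- and $(2,3)$-entries vanish, and the $(3,3)$-entry is $1$ because $\boxed{12n}^2=0$. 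This is precisely the claimed matrix, and in particular its $(1,2)$-entry is $\pm\lambda_{2n}$.

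I expect the combinatorial bookkeeping of the second step to be the only real obstacle: carefully pinning down which vertices of $\Gamma_T$ count as ``near'' $2$ and $n$, verifying that the hug-the-center path meets each diagonal crossing on its identity side (rather than its $\rho$ side), and checking how the two spin-structure types $00$ and $11$ translate into inversions of the $E$- and $A$-factors, and hence into the signs $(-1)^{\epsilon}$. Once the path has been described precisely, everything else is a routine matrix multiplication, simplified by \Cref{cor:product_of_A_matrices}.
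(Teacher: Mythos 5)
Your proposal is correct and follows essentially the same route as the paper: reduce the holonomy to the three-matrix product $E_{1n}\,A^1_{2n}\,E_{12}$ (or its inverse form for type $11$) via \Cref{cor:product_of_A_matrices}, multiply out, and simplify the odd entries with \Cref{rmk:normalized_mu_relations}. The only difference is that you spell out the path-tracing through $\Gamma_T$ (including why the diagonal crossings contribute the identity rather than $\rho$), which the paper leaves implicit.
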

\begin{proof}
    By \Cref{cor:product_of_A_matrices}, the holonomy is simply the product of three matrices:
    \[ E_{1n} A^1_{2n} E_{12} \quad \text{ or } \quad E_{1n}^{-1} {A^1_{2n}}^{-1} E_{12}^{-1} \]
    After multiplying the matrices, use \Cref{rmk:normalized_mu_relations} to simplify (in particular 
    $\lambda_{1n} \Tu^1_{2n} = \Td^n_{12}$, and $-\lambda_{12} \Tu^1_{2n} = - \Td^2_{1n}$). 
\end{proof}

\begin {remark}
    By \Cref{lem:ij_independence}, if we choose different starting and ending vertices near $2$ and $n$, the resulting holonomy matrix
    will still have $(1,2)$-entry equal to $\pm \lambda_{2n}$.
\end {remark}

\subsection{Canonical paths} 

Given a generic triangulation $T$, we identify its fan centers as in Section \ref{sec:background}, letting $a=c_0$ and $b=c_{N+1}$ so that $(a,b)$ is the longest arc of $T$.  
We define two canonical paths along the corresponding graph $\Gamma_T$
in order to compute the holonomy $H_{a,b}$ as follows.

The first one, called the \emph{early-crossing canonical path} (or \emph{early path} for short), is defined as follows:

\begin {enumerate}
    \item For each $0\leq k\leq N-1$, follow the $E$-edge parallel to $(c_k,c_{k+1})$ and then continue along a series of $A$-edges 
          until reaching a point near $(c_{k+1},c_{k+2})$. Immediately cross the diagonal $(c_{k+1},c_{k+2})$.

    \item Continue step (1) $N-1$ times until reaching the last fan segment. 
          After crossing the diagonal $(c_{N-1},c_{N})$ from a point near $c_{N-1}$, we follow by an $E$-edge parallel to $(c_{N-1},c_{N})$ 
          and continue along a series of $A$-edges until reaching a point near $c_N$ as well as the arc $(c_N,c_{N+1})$. We then end with the $E$-edge parallel to $(c_N,c_{N+1})$.
\end {enumerate}

We also define the \emph{late-crossing canonical path}  (or \emph{late path} for short). 

\begin {enumerate}
    \item Follow the $E$-edge parallel to $(c_0,c_1)$ and then, as long as $N\geq 1$, continue along a series of $A$-edges until reaching 
          a point near $c_1$ as well as the arc $(c_1,c_2)$.  Immediately follow the $E$-edge parallel to $(c_1,c_2)$.

    \item For $1 \leq k \leq N-1$, cross the arc $(c_k,c_{k+1})$, followed by $A$-edges until reaching a point near $c_{k+1}$ 
          as well as the arc $(c_{k+1},c_{k+2})$.  Immediately follow the $E$-edge parallel to $(c_{k+1},c_{k+2})$.

    \item After traversing along $N-1$ such subpaths, we have arrived at a point near $b=c_{N+1}$.
\end {enumerate}

Note that flipping the triangulation upside down turns an early path into a late path, and vice-versa.

\begin{figure}
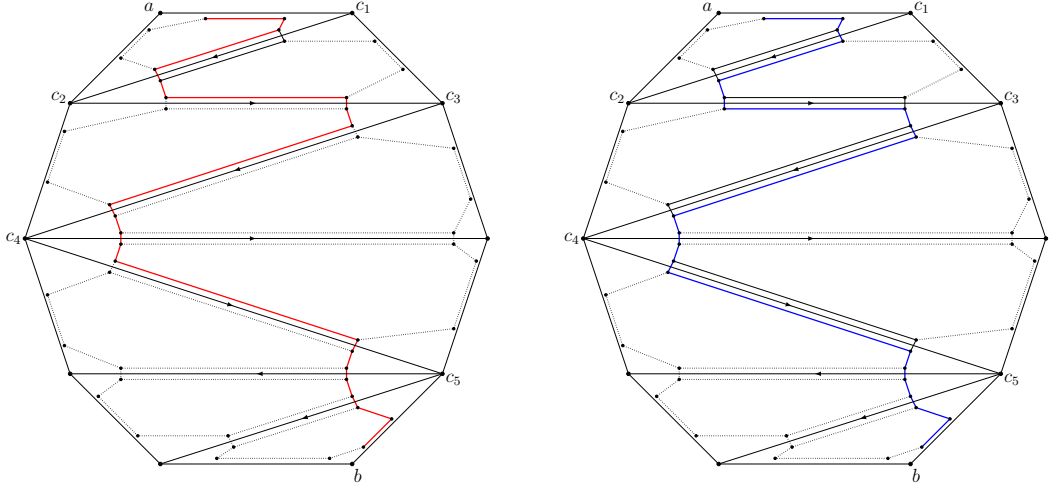

\centering
\scalebox{0.6}{\tikzfig{M-path-example}}\;\;\;\;\;\;
\scalebox{0.6}{\tikzfig{M-path-example-back}}
\caption{Left: Example of a late-crossing canonical path. Right: Example of an early-crossing canonical path (with default orientation illustrated).}
\label{fig:M-path-example}
\end{figure}

\subsection{Zig-Zag Triangulation} \label{sec:zig-zag}

Let $T$ be a zig-zag triangulation with default orientation, and with fan centers labelled as $c_i=i$, 
as depicted in \Cref{fig:arrow-reverse}. Let $H_{a,b}$ denote the holonomy following one of the canonical 
paths from a vertex near $a=c_0$ (on the side closer to $c_1$) to a vertex near $b=c_{N+1}$ (on the side closer to $c_N$), as in \Cref{fig:M-path-example}.

\begin {remark} \label{rmk:early-late-path-products}
    The holonomy matrix obtained from the late path will be a product of $E$, $A$, and $\rho$ matrices.
    In particular, if we define
    $X_i := E_{i,i+1} A^i_{i-1,i+1} \rho$ and $Y_i := E_{i,i+1}^{-1} {A^i_{i-1,i+1}}^{-1} \rho$, then we will have the following forms for $H_{ab}$
    depending on the type\footnote{
        If $\epsilon_a=0$, the late path actualy starts $\cdots Y_2 E_{12}A^1_{02}E_{01}$. But since $\rho^2 = \mathrm{id}$ and $\rho E_{01} = E_{01}^{-1}$,
        this is equal to the more concise expression given in the table. Similarly when $\epsilon_a=1$, the path starts with
        $\cdots X_2 E_{12}^{-1} {A^1_{02}}^{-1} E_{01}^{-1}$, but this is equal to the product shown.
    }:
    \[
        \begin {array}{c|c|c} 
            H_{ab}         & \epsilon_b = 0                      & \epsilon_b = 1                      \\ \hline
            \epsilon_a = 0 & X_N \cdots Y_4X_3Y_2X_1 E_{01}^{-1} & Y_N \cdots Y_4X_3Y_2X_1 E_{01}^{-1} \\
            \epsilon_a = 1 & X_N \cdots X_4Y_3X_2Y_1 E_{01}      & Y_N \cdots X_4Y_3X_2Y_1 E_{01}
        \end {array}
    \]
    To describe the early path, define $\mathcal{X}_i := A^i_{i-1,i+1} E_{i-1,i} $ and $\mathcal{Y}_i := {A^i_{i-1,i+1}}^{-1} E_{i-1,i}^{-1} $. Then
    the early path will have the form
    \[
        \begin {array}{c|c|c} 
            H_{ab}         & \epsilon_b = 0                      & \epsilon_b = 1                      \\ \hline
            \epsilon_a = 0 & E_{N,N+1} \mathcal{X}_N \cdots \mathcal{Y}_4\mathcal{X}_3\mathcal{Y}_2\mathcal{X}_1 & E_{N,N+1}^{-1} \mathcal{Y}_N \cdots \mathcal{Y}_4\mathcal{X}_3\mathcal{Y}_2\mathcal{X}_1 \\
            \epsilon_a = 1 & E_{N,N+1} \mathcal{X}_N \cdots \mathcal{X}_4\mathcal{Y}_3\mathcal{X}_2\mathcal{Y}_1 & E_{N,N+1}^{-1} \mathcal{Y}_N \cdots \mathcal{X}_4\mathcal{Y}_3\mathcal{X}_2\mathcal{Y}_1
        \end {array}
    \]
\end {remark}

Our proof has two parts:
the first is an induction via left matrix multiplication, proving the first two columns of the holonomy matrix formula, 
and the second is an induction via right matrix multiplication, proving the formula for the first two rows.
As mentioned earlier, the expression for the $(3,3)$-entry follows immediately from \Cref{eq:osp1}.

\subsubsection{Proof for the first two columns} \label{sec:1st-2-columns}
We first induct by left-multiplication, which corresponds to flipping certain diagonals from top to bottom. 
Recall that performing a quadrilateral flip will alter the orientation of another edge, so it is important that we 
keep track of the arrows as we perform a sequence of flips. For a zig-zag triangulation with default orientation $1\to 2\to\cdots\to N$, 
the natural flip sequence is from bottom to top\footnote{This is called the \emph{default flip sequence} in \cite{moz21}.}, 
because every flip in this sequence will not alter the arrows of other un-flipped edges. 
On the other hand, if we flip from top to bottom, certain steps in this sequence will change the orientation of other edges 
that are not yet flipped, which makes it difficult to keep track of the orientations. Therefore, we need to ``manually'' 
reverse all the arrows so that the top-to-bottom flip has the desired property.

It is explained in \cite{pz_19} that reversing the arrows of a triangle and negating the $\mu$-invariants is an equivalence of spin structure, 
so we start by applying this equivalence move on every even numbered triangles, i.e. $c_{i-1},c_i,c_{i+1}$ for even $i$. 
This will negate all the $\theta_i$'s for even $i$ and turn the orientation in to the reversed default orientation as desired. See \Cref{fig:arrow-reverse} for illustration.

\begin{figure}[h]
\centering
\scalebox{0.85}{
	\tikzfig{zig-zag-example}
	\begin{tikzpicture}[baseline,scale=1]
	    \draw[->, thick](0,0)--(1,0);
	    \node[above]  at (0.5,0) {};
	\end{tikzpicture}
	\tikzfig{zig-zag-example-reversed}
}
\caption{Reversing the default orientation of a zig-zag triangulation.}
\label{fig:arrow-reverse}
\end{figure}
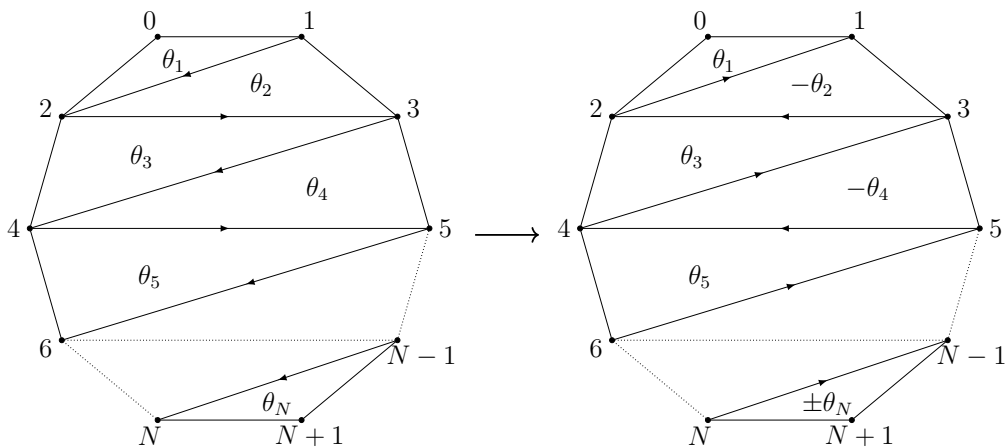

\begin{proof}
We show the proof of \Cref{thm:generic} only in the case when
$c_0,c_1,c_2$ are oriented clockwise (i.e. $\epsilon_a = 0$),
noting that the argument can be checked in a similar manner when $\epsilon_a = 1$.
We will induct on $N$, the number of triangles.

\paragraph{\textbf{Base Case.}} For the base case, we have a single triangle. In the notation from earlier in \Cref{sec:zig-zag}, we are
computing $H_{02}$. Using \Cref{thm:single-fan} with the specialization $N=1$
and using the labelling of \Cref{fig:single-fan} (so that  $c_0=2$, $c_1=1$, and $c_2=3$) yields the desired matrix $H_{a,b}$ after the proper substitutions, 
noting e.g. that $\lambda_{ij}$ and $\boxed{ijk}$ equals zero when $i=j$.

Next, when $k > 1$, we assume that the formula holds for $H_{0,k}$ and prove it for $H_{0,k+1}$. The induction step will be slightly different when $k$ is even or odd.
Note that if $k$ is even (resp. odd), then the holonomy $H_{0,k}$ is of type 00 (resp. type 01) and $ H_{0,k+1}$ is of type 01 (resp. type 00).

\textbf{Inductive step for $k$ even.}
By the induction hypothesis, we have
\[H_{0,k}  =\ospmatrix {-{\lambda_{1  k}\over \lambda_{0 1}}} {\lambda_{0k}} {\star}
	{{\l{1}{ {k-1}}\over \lambda_{ 0 1}\l { {k-1}}{k}}} {-{\l 0 {k-1}\over\l{k-1}{k}}} {\star}
	{{1\over\lambda_{01}}\td {1} { {k-1}}{ k}} {-\td { 0}{ {k-1}}{ {k}}} {\star}
\]
Next we compute $H_{0,k+1} = Y_k H_{0,k} = E_{k,k+1}^{-1} {A^k_{k-1,k+1}}^{-1} \rho H_{0,k}$:
\begin{equation}\label{eqn:two_col}
    \ospmatrix
    { 
        - \left( \frac{\l{1}{k}\l{k-1}{k+1}+\l{1}{k-1}\l{k}{k+1}} {\l{0}{1}\l{k-1}{k}}
        + \frac{\td{1}{k-1}{k} \td{k+1}{k-1}{k}} {\l{0}{1}} \right)
    }
    { 
        \frac{\l{0}{k}\l{k-1}{k+1} + \l{0}{k-1}\l{k}{k+1}} {\l{k-1}{k}}
        - \td{k+1}{k-1}{k}\td{0}{k-1}{k}
    }
    {\star} 
    { 
        -{\l{ 1}{ k} \over \lambda_{{01}} \l{{k}}{{k+1}} } 
    } 
    { 
        \lambda_{{0,k}}\over \lambda_{{k,k+1} } 
    }
    {\star} 
    { 
        {\lambda_{1k}\over\lambda_{01}} \left(\tu{ k }{ 1 } { {k-1} }-\tu{ {k} }{ {k-1} }{ {k+1} }  \right) 
    } 
    { 
        \l{0}{k} \left( \tu{ {k} }{ {k-1} }{ {k+1} } - \tu{ {k} }{ {0} }{ {k-1} } \right) 
    }
    {\star}
\end{equation}

Note that the expressions in the first column are the Ptolemy relations (c.f. \Cref{rmk:alternate_ptolemy}) 
on the quadrilateral $({1},{k},{k-1},{k+1})$. So matrix multiplication in the first column corresponds to flipping the edges 
$({2},{3}),({3},{4}), \cdots, (k-1,k)$. Recall that the flips in this sequence will not alter
the orientation of other (un-flipped) edges. So for every even $k$, 
the quadrilateral flip is depicted as follows, where the $\mu$-invariants associated to the triangle ${k-1},{k},{k+1}$ are negated.
\[\begin{tikzpicture}[scale=0.45, baseline, thick]
    \draw (0,0)--(3,0)--(60:3)--cycle;
    \draw (0,0)--(3,0)--(-60:3)--cycle;
    \draw (0,0)--node {\midarrow} (3,0);
    \draw node[left] at (0,0) {${k}$};
    \draw node[above] at (60:3) {${1}$};
    \draw node[right] at (3,0) {${k-1}$};
    \draw node[below] at (-60:3) {${k+1}$};
    \end{tikzpicture}
    \begin{tikzpicture}[baseline,scale=0.6]
    \draw[->, thick](0,0)--(1,0);
    \node[above]  at (0.5,0) {};
    \end{tikzpicture}
    \begin{tikzpicture}[scale=0.45, baseline, thick,every node/.style={sloped,allow upside down}]
    \draw (0,0)--(60:3)--(-60:3)--cycle;
    \draw (3,0)--(60:3)--(-60:3)--cycle;
    \draw (1.5,-2) --node {\midarrow} (1.5,2);
    \draw node[left] at (0,0) {${k}$};
    \draw node[above] at (60:3) {${1}$};
    \draw node[right] at (3,0) {${k-1}$};
    \draw node[below] at (-60:3) {${k+1}$};
\end{tikzpicture}
\]
Thus by \Cref{eqn:super_ptolemy_lambda_star,eqn:super_ptolemy_mu_right_star}, we have
\begin{align*}
    \l{1}{k+1}     &= \frac{\l{1}{k} \l{k-1}{k+1} + \l{1}{k-1} \l{k}{k+1}} {\l{k-1}{k}} + (-\td{k+1}{k-1}{k}) \td{1}{k-1}{k} \\
    \tu{k}{1}{k+1} &= (-\tu{k}{k-1}{k+1}) + \tu{k}{1}{k-1}
\end{align*}
Now for the second column, the matrix multiplication corresponds to flipping the edges $(1,2),(2,3),\cdots,(k-1,k)$. 
Similar to the previous case, the the quadrilateral flip is depicted as follows
\[
\begin{tikzpicture}[scale=0.45, baseline, thick]
    \draw (0,0)--(3,0)--(60:3)--cycle;
    \draw (0,0)--(3,0)--(-60:3)--cycle;
    \draw (0,0)--node {\midarrow} (3,0);
    \draw node[left] at (0,0) {${k}$};
    \draw node[above] at (60:3) {${k+1}$};
    \draw node[right] at (3,0) {${k-1}$};
    \draw node[below] at (-60:3) {${0}$};
    \end{tikzpicture}
    \begin{tikzpicture}[baseline,scale=0.6]
    \draw[->, thick](0,0)--(1,0);
    \node[above]  at (0.5,0) {};
    \end{tikzpicture}
    \begin{tikzpicture}[scale=0.45, baseline, thick,every node/.style={sloped,allow upside down}]
    \draw (0,0)--(60:3)--(-60:3)--cycle;
    \draw (3,0)--(60:3)--(-60:3)--cycle;
    \draw (1.5,-2) --node {\midarrow} (1.5,2);
    \draw node[left] at (0,0) {${k}$};
    \draw node[above] at (60:3) {${k+1}$};
    \draw node[right] at (3,0) {${k-1}$};
    \draw node[below] at (-60:3) {${0}$};
\end{tikzpicture}
\]
and the Ptolemy relations are
\begin{align*}
    \l{0}{k+1}      &= {\l{0}{k}\l{k-1}{k+1} +\l{0}{k-1}\l{k}{k+1} \over \l{k-1}{k}} + (-\td{k}{k-1}{k+1})\td{0}{k-1}{k} \\
    \tu{k}{ 0}{k+1} &= (-\tu{k}{k-1}{k+1})+\tu{k}{0}{k-1}
\end{align*}

Now plugging these back into Matrix (\ref{eqn:two_col}), and using \Cref{rmk:normalized_mu_relations} (iv) twice, we get
\[
    H_{ 0, {k+1}} =
    \ospmatrix {-{\lambda_{1, { k+1}}\over \lambda_{ 0  1}}} {\lambda_{ 0,{k+1}}} {\star}
            {-{\l{ 1}{ {k}}\over \lambda_{ 0 1}\l { {k}}{{k+1}}}} {{\l{ 0}{{k}}\over\l{{k}}{ {k+1}}}} {\star}
            {{\lambda_{1k}\over\lambda_{ 0 1}}\tu {k} { {1}}{ k+1}} {-\lambda_{0k}\tu {k}{0}{ {k+1}}} {\star}
     = \ospmatrix {-{\lambda_{1, { k+1}}\over \lambda_{ 0  1}}} {\lambda_{ 0,{k+1}}} {\star}
            {-{\l{ 1}{ {k}}\over \lambda_{ 0 1}\l { {k}}{{k+1}}}} {{\l{ 0}{{k}}\over\l{{k}}{ {k+1}}}} {\star}
            {{1\over\lambda_{ 0 1}}\td { 1} { {k}}{ k+1}} {-\td { 0}{ {k}}{ {k+1}}} {\star}.
\]
This agrees with the formula of type 01 holonomy matrix.

\textbf{Induction step for $k$ odd.}
By the induction hypothesis, we have
\[
    H_{0,k} = \ospmatrix 
              {-{\lambda_{1, { k}}\over \lambda_{ 0  1}}} 
              {\lambda_{0,k}} 
              {\star}
	      {-{\l{ 1}{ {k-1}}\over \lambda_{ 0 1}\l { {k-1}}{{k}}}} 
              {{\l{ 0}{{k-1}}\over\l{{k-1}}{ {k}}}} 
              {\star}
	      {{1\over\lambda_{ 0 1}}\td { 1} { {k-1}}{ k}} 
              {-\td { 0}{ {k-1}}{ {k}}} 
              {\star}
\]
Then we compute $H_{0,k+1} = X_k H_{0k} = E_{k,k+1} A^k_{k-1,k+1} \rho H_{0,k}$:
\begin{equation}\label{eqn:two_col_1}
 \ospmatrix
 {-\left({\l{{1}}{{k}}\l{{k-1}}{{k+1}}+\l{{1}}{{k-1}}\l{{k}}{{k+1}}
  \over \lambda_{{01}}\lambda_{{k-1,k}}} + {\td{ 1}{{k-1}}{{k}}
 \td{{k}}{ k-1}{{k+1}}\over \lambda_{01}}\right) }
 {
 {\l{{0}}{{k}}\l{{k-1}}{{k+1}}+\l{{0}}{{k-1}}\l{{k}}{{k+1}}\over \lambda_{k-1,k}} +\td{ 0}{{k-1}}{{k}}
 \td{{k}}{ k-1}{{k+1}}
 }
 {\star}
 {{\l{ 1}{ k} \over \lambda_{{01}} \l{{k}}{{k+1}} } } {-\lambda_{{0,k}}\over \lambda_{{k,k+1} } }{\star}
 { { \lambda_{1k}\over\lambda_{01}} \left(\tu{ k }{ 1 } { {k-1} }+\tu{ {k} }{ {k-1} }{ {k+1} }  \right) } {
 {-{\lambda_{0k}}} \left( \tu{ {k} }{ {k-1} }{ {k+1} } + \tu{ {k} }{ {0} }{ {k-1} } \right) 
 }{\star}\end{equation} 
Similar to the $k$ even case, the first column are Ptolemy relations corresponding to flipping the edges $(2,3),(3,4),\cdots,(k-1,k)$. 
The last flip is depicted as follows. Note that in this case, the $\mu$-invariant associated to the triangle $(k-1,k,k+1)$ is not negated.
\[
		\begin{tikzpicture}[scale=0.45, baseline, thick]
		\draw (0,0)--(3,0)--(60:3)--cycle;
		\draw (0,0)--(3,0)--(-60:3)--cycle;
		\draw (0,0)--node {\midrevarrow} (3,0);
		\draw node[left] at (0,0) {${k-1}$};
		\draw node[above] at (60:3) {${1}$};
		\draw node[right] at (3,0) {${k}$};
		\draw node[below] at (-60:3) {${k+1}$};
		\end{tikzpicture}
		\begin{tikzpicture}[baseline,scale=0.6]
		\draw[->, thick](0,0)--(1,0);
		\node[above]  at (0.5,0) {};
		\end{tikzpicture}
		\begin{tikzpicture}[scale=0.45, baseline, thick,every node/.style={sloped,allow upside down}]
		\draw (0,0)--(60:3)--(-60:3)--cycle;
		\draw (3,0)--(60:3)--(-60:3)--cycle;
		\draw (1.5,-2) --node {\midrevarrow} (1.5,2);
		\draw node[left] at (0,0) {${k-1}$};
		\draw node[above] at (60:3) {${1}$};
		\draw node[right] at (3,0) {${k}$};
		\draw node[below] at (-60:3) {${k+1}$};
		\end{tikzpicture}
\]
Thus by \Cref{eqn:super_ptolemy_lambda_star,eqn:super_ptolemy_mu_left_star}, we have
\begin{align*}
    \l{1}{k+1}     &= \frac{\l{1}{k}\l{k-1}{k+1} + \l{1}{k-1}\l{k}{k+1}}{\l{k-1}{k}} + \td{1}{k-1}{k}\td{k+1}{k-1}{k} \\
    \tu{k}{1}{k+1} &= \tu{k}{k-1}{k+1} + \tu{k}{1}{k-1}
\end{align*}
The matrix multiplication for the second column corresponds to flipping the edges, in order, $(1,2),(2,3),\cdots,(k-1,k)$, where the the quadrilateral flip is depicted as follows
\[
		\begin{tikzpicture}[scale=0.45, baseline, thick]
		\draw (0,0)--(3,0)--(60:3)--cycle;
		\draw (0,0)--(3,0)--(-60:3)--cycle;
		\draw (0,0)--node {\midrevarrow} (3,0);
		\draw node[left] at (0,0) {${k}$};
		\draw node[above] at (60:3) {${0}$};
		\draw node[right] at (3,0) {${k-1}$};
		\draw node[below] at (-60:3) {${k+1}$};
		\end{tikzpicture}
		\begin{tikzpicture}[baseline,scale=0.6]
		\draw[->, thick](0,0)--(1,0);
		\node[above]  at (0.5,0) {};
		\end{tikzpicture}
		\begin{tikzpicture}[scale=0.45, baseline, thick,every node/.style={sloped,allow upside down}]
		\draw (0,0)--(60:3)--(-60:3)--cycle;
		\draw (3,0)--(60:3)--(-60:3)--cycle;
		\draw (1.5,-2) --node {\midrevarrow} (1.5,2);
		\draw node[left] at (0,0) {${k}$};
		\draw node[above] at (60:3) {${0}$};
		\draw node[right] at (3,0) {${k-1}$};
		\draw node[below] at (-60:3) {${k+1}$};
		\end{tikzpicture}
\]
and the Ptolemy relations are
\begin{align*}
    \l{0}{k+1}     &= \frac{\l{0}{k}\l{k-1}{k+1} + \l{0}{k-1}\l{k}{k+1}}{\l{k-1}{k}} + \td{0}{k-1}{k}\td{k+1}{k-1}{k} \\
    \tu{k}{0}{k+1} &= \tu{k}{k-1}{k+1} + \tu{k}{0}{k-1}
\end{align*}
Now plugging these Ptolemy relations into Matrix (\ref{eqn:two_col_1}) we get
\[
    H_{ 0, {k+1}} =
    \ospmatrix {-{\lambda_{1, { k+1}}\over \lambda_{ 0  1}}} {\lambda_{ 0,{k+1}}} {\star}
            {{\l{ 1}{ {k}}\over \lambda_{ 0 1}\l { {k}}{{k+1}}}} {-{\l{ 0}{{k}}\over\l{{k}}{ {k+1}}}} {\star}
            {{\lambda_{1k}\over\lambda_{ 0 1}}\tu { k} { 1}{ k+1}} {-\lambda_{0k}\tu { k}{0}{ {k+1}}} {\star}
    = \ospmatrix {-{\lambda_{1, { k+1}}\over \lambda_{ 0  1}}} {\lambda_{ 0,{k+1}}} {\star}
            {{\l{ 1}{ {k}}\over \lambda_{ 0 1}\l { {k}}{{k+1}}}} {-{\l{ 0}{{k}}\over\l{{k}}{ {k+1}}}} {\star}
            {{1\over\lambda_{ 0 1}}\td { 1} { {k}}{ k+1}} {-\td { 0}{ {k}}{ {k+1}}} {\star}
\]
This agrees with the formula of type 00 holonomy matrix.
\end{proof}

\subsubsection{Proof for the first two rows.} \label{sec:1st-2-rows}

Next we turn to the induction for the first two rows via right multiplication. In this case we will use the early path. 

It turns out that induction by right multiplication corresponds to flipping the diagonals from bottom to top,  
as opposed to the previous case. This already has the property that each flip does not alter the orientation of other unflipped edges. 
Therefore here we do not need the extra step of reversing all the arrows.

\begin{proof}
We illustrate the proof in the case that
$c_{N-1}, c_N, c_{N+1}$ are oriented counterclockwise, i.e. the path is of type $01$ or $11$. In this case the holonomy matrix looks like
\[
    H_{0,N+1} = E_{N,N+1}^{-1} \mathcal{Y}_N \cdots \mathcal{Y}_2 \mathcal{X}_1 \quad \text{or} \quad 
                E_{N,N+1}^{-1} \mathcal{Y}_N \cdots \mathcal{X}_2 \mathcal{Y}_1,
\]
the former for type $01$ and the latter for type $11$
(recall that we are using the early path).

The base case is a triangle $H_{N-1,N+1} = E_{N,N+1}^{-1}A^N_{N-1,N+1}E_{N-1,N}$, which can be verified using \Cref{thm:single-fan} with the specialization $N=1$.

We now assume by induction that the formula holds for $H_{k,N+1}$ for some $k<N$. 

\textbf{Induction step for $N-k$ even.}
If $N-k$ is even, i.e. there are even number of triangles 
in the sub-triangulation spanned by the diagonal $(k,N+1)$, the holonomy $H_{k,N+1}$ is of type $01$. Thus by induction hypothesis we have
\[
    H_{k,N+1} = \ospmatrix
    {-\frac{\l{k+1}{N+1}}{\l{k}{k+1}}}
    {\l{k}{N+1}}
    {\td{N+1}{k}{k+1}}
    {-{\l{k+1}{N} \over \l{k}{k+1}\l{N}{N+1}}}
    {{\l{k}{N} \over \l{N}{N+1}}}
    {{1 \over \l{N}{N+1}}\td{N}{k}{k+1}}
    {\star}
    {\star}
    {\star}.
\]
Then we compute $H_{k-1,N+1} = H_{k,N+1} \mathcal{Y}_k = H_{k,N+1} {A^k_{k-1,k+1}}^{-1} E_{k-1,k}^{-1}$:
\begin{equation}\label{eqn:two-row}
    \ospmatrix
    {-{\l{k}{N+1} \over \l{k-1}{k}} }
    {-\left({ \l{k-1}{k} \l{k+1}{N+1} + \l{k-1}{k+1}\l{k}{N+1} \over \l{k}{k+1} } + \td{N+1}{k}{k+1}\td{k-1}{k}{k+1} \right) }
    {\l{k}{N+1}\left(\tu{k}{k+1}{N+1}+\tu{k}{k-1}{k+1}\right)}%
    { -{\l{k}{N} \over \l{k-1}{k}\l{N}{N+1}}  }
    { -\left({ \l{k-1}{k} \l{k+1}{N} + \l{k-1}{k+1}\l{k}{N} \over \l{k}{k+1} \l{N}{N+1} } + {\td{N}{k}{k+1}\td{k-1}{k}{k+1} \over \l{N}{N+1} } \right)  }
    { {\l{k}{N} \over \l{N}{N+1} } \left(\tu{k}{k+1}{N} + \tu{k}{k-1}{k+1}\right) }
    {\star}{\star}{\star}
\end{equation}

The expression on the first row are Ptolemy relations from flipping, starting from the bottom, the diagonals $(N,N+1),(N-1,N),\cdots,(k,k+1)$. 
The final flip in the sequence is depicted as follows:
\begin{center}
\begin{tikzpicture}[scale=0.45, baseline, thick]
    \draw (0,0)--(3,0)--(60:3)--cycle;
    \draw (0,0)--(3,0)--(-60:3)--cycle;
    \draw (0,0)--node {\midarrow} (3,0);
    \draw node[left] at (0,0) {${k}$};
    \draw node[above] at (60:3) {${k-1}$};
    \draw node[right] at (3,0) {${k+1}$};
    \draw node[below] at (-60:3) {${N+1}$};
    \end{tikzpicture}
    \begin{tikzpicture}[baseline,scale=0.6]
    \draw[->, thick](0,0)--(1,0);
    \node[above]  at (0.5,0) {};
    \end{tikzpicture}
    \begin{tikzpicture}[scale=0.45, baseline, thick,every node/.style={sloped,allow upside down}]
    \draw (0,0)--(60:3)--(-60:3)--cycle;
    \draw (3,0)--(60:3)--(-60:3)--cycle;
    \draw (1.5,-2) --node {\midarrow} (1.5,2);
    \draw node[left] at (0,0) {${k}$};
    \draw node[above] at (60:3) {${k-1}$};
    \draw node[right] at (3,0) {${k+1}$};
    \draw node[below] at (-60:3) {${N+1}$};
\end{tikzpicture}
\end{center}
\Cref{eqn:super_ptolemy_lambda_star,eqn:super_ptolemy_mu_left_star} gives us
\begin{align*}
    \l{k-1}{N+1}     &= { \l{k-1}{k} \l{k+1}{N+1} + \l{k-1}{k+1}\l{k}{N+1} \over \l{k}{k+1} } + \td{N+1}{k}{k+1}\td{k-1}{k}{k+1} \\
    \tu{k}{k-1}{N+1} &= \tu{k}{k-1}{k+1} + \tu{k}{k+1}{N+1}
\end{align*}

The expressions on the second row come analogously from flipping the diagonals \\ $(N-1,N),(N-2,N-1),\cdots, (k,k+1)$, where the final flip is depicted as follows.
\begin{center}
\begin{tikzpicture}[scale=0.45, baseline, thick]
    \draw (0,0)--(3,0)--(60:3)--cycle;
    \draw (0,0)--(3,0)--(-60:3)--cycle;
    \draw (0,0)--node {\midarrow} (3,0);
    \draw node[left] at (0,0) {${k}$};
    \draw node[above] at (60:3) {${k-1}$};
    \draw node[right] at (3,0) {${k+1}$};
    \draw node[below] at (-60:3) {${N}$};
    \end{tikzpicture}
    \begin{tikzpicture}[baseline,scale=0.6]
    \draw[->, thick](0,0)--(1,0);
    \node[above]  at (0.5,0) {};
    \end{tikzpicture}
    \begin{tikzpicture}[scale=0.45, baseline, thick,every node/.style={sloped,allow upside down}]
    \draw (0,0)--(60:3)--(-60:3)--cycle;
    \draw (3,0)--(60:3)--(-60:3)--cycle;
    \draw (1.5,-2) --node {\midarrow} (1.5,2);
    \draw node[left] at (0,0) {${k}$};
    \draw node[above] at (60:3) {${k-1}$};
    \draw node[right] at (3,0) {${k+1}$};
    \draw node[below] at (-60:3) {${N}$};
\end{tikzpicture}
\end{center}
Here the Ptolemy relations are
\begin{align*}
    \l{k-1}{N}     &= { \l{k-1}{k} \l{k+1}{N} + \l{k-1}{k+1}\l{k}{N} \over \l{k}{k+1} } + \td{N}{k}{k+1}\td{k-1}{k}{k+1} \\
    \tu{k}{k-1}{N} &= \tu{k}{k-1}{k+1} + \tu{k}{k+1}{N}
\end{align*}
Now Plugging these relations into \Cref{eqn:two-row}, we get
\[
    H_{k-1,N+1}= \ospmatrix
    {-{\l k {N+1}\over\l {k-1}k} }%
    {-{\l{k-1}{N+1}}}%
    { \td{N+1}{k-1}{k} }%
    { -{\lambda_{k,N}\over\l {k-1}k\l{N}{N+1} }  }%
    { -{\l{k-1}{N} \over\l{N}{N+1} }  }%
    { {1\over \l {N}{N+1} }\td{N}{k-1}{k} }%
    {\star}{\star}{\star}
\] 
which matches the formula for type 11 holonomy matrix.

\textbf{Induction step for $N-k$ odd.}
Next we turn to the case when $N-k$ is odd, where $H_{k,N+1}$ is of type $11$ and $H_{k-1,N+1}$ is of type $01$.
By the induction hypothesis we have
\[
    H_{k,N+1} =  \ospmatrix
    {-{\l {k+1} {N+1}\over\l {k}{k+1} } }%
    {-{\l{k}{N+1}}}%
    { \td{N+1}{k}{k+1} }%
    { -{\lambda_{k+1,N}\over\l {k}{k+1}\l{N}{N+1} }  }%
    { -{\l{k}{N} \over\l{N}{N+1} }  }%
    { {1\over \l {N}{N+1} }\td{N}{k}{k+1} }%
    {\star}{\star}{\star}
\]
Then we calculate $H_{k-1,N+1} = H_{k,N+1} X_k = H_{k,N+1} A^k_{k-1,k+1} E_{k-1,k}$:
\begin{equation} \label{eqn:two-row-1}
    \ospmatrix
    {-{\l k {N+1}\over\l {k-1}k} }%
    {{ \l{k-1}k \l{k+1}{N+1}+\l{k-1}{k+1}\l k{N+1} \over \l k{k+1} }+\td{k-1}{k}{k+1}\td{N+1}{k}{k+1}  }%
    {\l{k}{N+1}\left(\tu{k}{k+1}{N+1}+\tu{k}{k-1}{k+1}\right)}%
    { -{\lambda_{k,N}\over\l {k-1}k\l{N}{N+1} }  }
    { { \l{k-1}k \l{k+1}{N}+\l{k-1}{k+1}\l k{N} \over \l k{k+1}\l N{N+1} }+{ \td{k-1}{k}{k+1}\td{N}{k}{k+1}\over\l{N}{N+1} }  }
    { {\l{k}{N}\over \l N{N+1} }\left(\tu{k}{k+1}{N}+\tu{k}{k-1}{k+1}\right) }
    {\star}{\star}{\star}
\end{equation}
Similar to the previous case, the first row corresponds to the flip sequence given by the following $(N,N+1),(N-1,N),\cdots,(k,k+1)$ 
and the second row corresponds to the flip sequence $(N-1,N),(N-2,N-1),\cdots,(k,k+1) $, which are given by the following two diagrams respectively.
\begin{center}
\begin{tikzpicture}[scale=0.42, baseline, thick]
    \draw (0,0)--(3,0)--(60:3)--cycle;
    \draw (0,0)--(3,0)--(-60:3)--cycle;
    \draw (0,0)--node {\midarrow} (3,0);
    \draw node[left] at (0,0) {$k$};
    \draw node[above] at (60:3) {$N+1$};
    \draw node[right] at (3,0) {${k+1}$};
    \draw node[below] at (-60:3) {$k-1$};
\end{tikzpicture}
\begin{tikzpicture}[baseline,scale=0.5]
    \draw[->, thick](0,0)--(1,0);
    \node[above]  at (0.5,0) {};
\end{tikzpicture}
\begin{tikzpicture}[scale=0.42, baseline, thick,every node/.style={sloped,allow upside down}]
    \draw (0,0)--(60:3)--(-60:3)--cycle;
    \draw (3,0)--(60:3)--(-60:3)--cycle;
    \draw (1.5,-2) --node {\midarrow} (1.5,2);
    \draw node[left] at (0,0) {$k$};
    \draw node[above] at (60:3) {$N+1$};
    \draw node[right] at (3,0) {${k+1}$};
    \draw node[below] at (-60:3) {$k-1$};
\end{tikzpicture}
\quad\;\;
\begin{tikzpicture}[scale=0.42, baseline, thick]
    \draw (0,0)--(3,0)--(60:3)--cycle;
    \draw (0,0)--(3,0)--(-60:3)--cycle;
    \draw (0,0)--node {\midarrow} (3,0);
    \draw node[left] at (0,0) {$k$};
    \draw node[above] at (60:3) {$N$};
    \draw node[right] at (3,0) {${k+1}$};
    \draw node[below] at (-60:3) {$k-1$};
\end{tikzpicture}
\begin{tikzpicture}[baseline,scale=0.5]
    \draw[->, thick](0,0)--(1,0);
    \node[above]  at (0.5,0) {};
\end{tikzpicture}
\begin{tikzpicture}[scale=0.42, baseline, thick,every node/.style={sloped,allow upside down}]
    \draw (0,0)--(60:3)--(-60:3)--cycle;
    \draw (3,0)--(60:3)--(-60:3)--cycle;
    \draw (1.5,-2) --node {\midarrow} (1.5,2);
    \draw node[left] at (0,0) {$k$};
    \draw node[above] at (60:3) {$N$};
    \draw node[right] at (3,0) {${k+1}$};
    \draw node[below] at (-60:3) {$k-1$};
\end{tikzpicture}
\end{center}
The Ptolemy relations are
\begin{align*}
    \lambda_{k-1,N+1} &= { \l{k-1}{k} \l{k+1}{N+1}+\l{k-1}{k+1}\l k{N+1} \over \l k{k+1} }+\td{k-1}{k}{k+1}\td{N+1}{k}{k+1}	\\
    \tu k{k-1}{N+1} & = \tu k{k-1}{k+1}+\tu k{k+1}{N+1}\\
    \lambda_{k-1,N} &= { \l{k-1}k \l{k+1}{N}+\l{k-1}{k+1}\l k{N} \over \l k{k+1} }+\td{k-1}{k}{k+1}\td{N}{k}{k+1}	\\
    \tu k{k-1}N & = \tu k{k-1}{k+1}+\tu k{k+1}{N}
\end{align*}
Plugging into \Cref{eqn:two-row-1} we get
\[
    H_{k-1,N+1} =
    \ospmatrix
    {-{\l k {N+1}\over\l {k-1}k} }%
    {{\l{k-1}{N+1}}}%
    { \td{N+1}{k-1}{k} }%
    { -{\lambda_{k,n}\over\l {k-1}k\l{N}{N+1} }  }%
    { {\l{k-1}N \over\l{N}{N+1} }  }%
    { {1\over \l N{N+1} }\td{N}{k-1}{k} }%
    {\star}{\star}{\star}
\] 
which matches the formula for type 01 holonomy matrix.

We omit the proof of the other case when $c_{N-1},c_N,c_{N+1}$ are oriented clockwise.
\end{proof}

\begin {remark} \label{rmk:local_signs}
    In \Cref{sec:1st-2-rows}, it was noted that the matrix product computes the odd entries in the third column ($\td{N+1}{0}{1}$ and $\frac{1}{\lambda_{N,N+1}}\td{N}{0}{1}$)
    using a particular sequence of Ptolemy relations, which always uses  
    \Cref{eqn:super_ptolemy_mu_left_star}, rather than \Cref{eqn:super_ptolemy_mu_right_star}, from \Cref{rmk:alternate_ptolemy}.
    Therefore by Theorem 6.2(b) from \cite{moz22}, these odd elements, 
    when expressed as polynomials in the variables from the original triangulation, have all positive terms.
    Similarly, in \Cref{sec:1st-2-columns}, the matrix product computes the odd elements 
    from the third row ($\frac{1}{\lambda_{01}}\td{1}{N}{N+1}$ and $\td{0}{N}{N+1}$) using Ptolemy relation 
    \Cref{eqn:super_ptolemy_mu_right_star}, which can be 
    seen as an instance of \Cref{eqn:super_ptolemy_mu_left_star} after negating half of the odd variables, and reversing the orientations on all the diagonals.
    So although the polynomial expressions of these odd elements have some signs, 
    Theorem 6.2(b) from \cite{moz22} says that these expressions have all positive terms when expressed instead in the new variables $\theta_i' = (-1)^{i+1} \theta_i$.
\end {remark}

We have now completed the proof of \Cref{thm:generic} (and hence of \Cref{thm:12-entry}) for zig-zag triangulations with default orientation.

\subsection{Generic Triangulation}

The theorem for generic triangulations (with default orientation) is a direct consequence of the zig-zag case and \Cref{cor:product_of_A_matrices}. 
Write the holonomy as a product of matrices following one of the canonical paths. As we traverse trough the $i$-th fan segment, 
we can use \Cref{cor:product_of_A_matrices} to write the product of $A$-matrices in a fan as the single matrix ${A^{c_i}_{c_{i-1},c_{i+1}}}^{\pm 1}$. 
This is the same as flipping the diagonals inside each fan segment, which turns a generic triangulation into a zig-zag triangulation whose vertices 
are the original fan centers. See \cite[Figure 15]{moz21}.

What remains is to consider the case of an orientation $\tau$ of $T$ that is \emph{not} the default one.
In this case, it is possible to define the holonomy matrix $H_{a,b}$ as a product of matrices 
just as we did in \Cref{def:holonomy_matrices}, but relative to orientation $\tau$.  
The only difference will be that some instances of the matrix $\rho$ will instead be an identity matrix, and vice-versa.
The effect in either case is that the holonomy matrices crossing the edges whose orientation has changed are multiplied by $\rho$.

It is explained in \cite{pz_19} (and again in \cite{moz21, moz22} using our notations and conventions) that reversing the
orientations around all three edges of a triangle corresponds to negating the associated odd variable. Also, it is possible to go from
any orientation to the default one (the boundary edges may differ, but the interior diagonals can be made to agree with the default orientation)
by a sequence of such orientation-reversals around triangles. So we may reduce the general case to examining what happens when we
do this orientation-reversal in a single triangle.

When reversing the orientation of all three edges around a triangle,
all six vertices of the hexagonal face of $\Gamma_T$ in this triangle will be incident to an edge whose holonomy has been multiplied by $\rho$.
If we perform a gauge transformation by $\rho$ at each of these six vertices
\footnote{By a ``\emph{gauge transformation by $\rho$ at a vertex}'', we mean left-multiplying all outgoing edge holonomies by $\rho$
and right-multiplying all incoming edge holonomies by $\rho$.}, 
we can restore those edges to their previous weights (before we changed
their orientations). Since each edge has two endpoints which are gauged, the effect on the three $A$ matrices and the three $E$ matrices will be that they are all conjugated by $\rho$.
Since $E$ commutes with $\rho$, and $\rho^2 = \mathrm{id}$, this leaves the $E$-matrices unaffected. But as was pointed out in
\Cref{rmk:fermionic_reflection}, we have $\rho A(h|\theta) \rho = A(h| -\theta)$. So, in agreement with the remark in the preceding paragraph, 
the effect that this orientation-reversal has on the connection is simply to change $\theta \mapsto -\theta$ in the $A$-matrices.

It is clear that if a path passes through a vertex $v$ of $\Gamma_T$, then a gauge transformation at $v$ will not affect the holonomy
along this path (the contributions of the incoming and outgoing edges will cancel). The conclusion here is that the holonomy formula 
from \Cref{thm:generic} still holds for arbitrary orientation, provided we negate the corresponding odd variables every time we do such an orientation-reversal
around a triangle. 

However, if the vertex $v$ where we perform a gauge transformation is either the beginning or ending point of the path, then the holonomy \emph{will}
change. Specifically, if we reverse the orientations around the first or last triangle (or both), the effect on the holonomy is $H_{ab} \mapsto H_{ab} \rho$,
or $H_{ab} \mapsto \rho H_{ab}$, or $H_{ab} \mapsto \rho H_{ab} \rho$.

\section{Double Dimer Interpretation of Matrix Formulae}
\label{sec:DD}

Motivated by the methods of Sections 4 and 5 of \cite{mw13}, we now provide a combinatorial interpretation of the holonomy matrices, 
which were defined in Section \ref{sec:Mpaths} and described explicitly for generic triangulations with the default orientation in \Cref{thm:generic}.   
In the case considered in \cite{mw13}, the construction involved matrices in $\PSL_2(\mathbb{C})$ whose entries were given interpretations in terms of 
perfect matchings of snake graphs.  In the present work, we instead consider $2|1$-by-$2|1$ matrices in the group $\osp(1|2)$, and 
obtain combinatorial interpretations of the entries in terms of double dimer covers of snake graphs, using results from \cite{moz22}.

Let $T$ be an arbitrary acyclic triangulation of a polygon such that the arc $(a,b)$ is the longest arc in $T$, i.e. it cuts through all internal arcs of $T$.  
Assume further that $T$ is equipped with the default orientation with fan centers labeled as $c_i$ for $1 \leq i \leq N$.  
Like in \Cref{sec:Hproof}, we let $a=c_0$, $b= c_{N+1}$,  and let $H_{ab}$ be the holonomy as defined in \Cref{def:H_ab}.
The main result of this section is to reinterpret the entries of $H_{a,b}$ as combinatorial generating functions as follows.

First, let $\widetilde{T}$ denote the triangulation that extends triangulation $T$ by defining two new marked points, $\widetilde{a}$ and $\widetilde{b}$, 
and adjoining the triangles $(\widetilde{a},c_0,c_1)$ and $(c_N, c_{N+1},\widetilde{b})$, respectively about the edges $(c_0,c_1)$ and $(c_N, c_{N+1})$.  
We will use $\theta_{\widetilde{a}}$ and $\theta_{\widetilde{b}}$ to denote the $\mu$-invariants associated to these two new triangles, repsectively.  See \Cref{fig:extT}.
We let $\widetilde{G} = G_{\widetilde{T}}$ be the snake graph corresponding to the longest arc $(\widetilde{a}, \widetilde{b})$ in $\widetilde{T}$, 
as defined initially in \cite{ms10} and extended to the case of decorated super Teichm\"uller space in \cite[Sec. 3]{moz22}.

\begin{figure}[h]
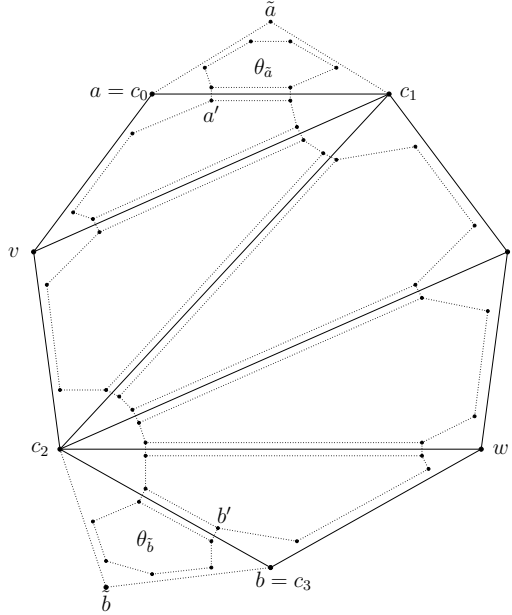

    \scalebox{0.7}{
        \tikzfig{extended-triangulation}
    }
\caption{Extended triangulation $\tilde T$.  We let $v$ (resp. $w$) denote the fourth corner of the quadrilateral 
defined by the marked points $\widetilde{a}$, $a=c_0$, and $c_1$ (resp. $c_N$, $b=c_{N+1}$, and $\widetilde{b}$).}
\label{fig:extT}
\end{figure}

In $\widetilde{T}$, we let $i_1,i_2,\dots, i_d$ denote the internal arcs crossed in order by the longest arc $(\widetilde{a},\widetilde{b})$.  
In particular, $i_1 = (c_0,c_1)$ and $i_d = (c_N, c_{N+1})$.  

Given our earlier definitions of $\epsilon_a$ and $\epsilon_b$, and noting that in the quadrilateral on $\widetilde{a},c_0,c_1,c_2$, 
the triangles $(c_0,c_1,c_2)$ and $(\widetilde{a}, c_0, c_1)$ are of opposite orientations (and we have an analogous statement for the quadrilateral on 
$c_{N-1}, c_N, c_{N+1}, \widetilde{b}$ we get the following equivalent usage of the values $\epsilon_a$ and $\epsilon_b$:
\begin{align*}
    \epsilon_a &= \begin{cases}
                      0&\text{ if }(\widetilde{a},c_0,c_1)\text{ are oriented {\bf counter-}clockwise,}\\
                      1&\text{ otherwise.}
                  \end{cases}\\
    \epsilon_b &= \begin{cases}
                      0&\text{ if }(c_N,c_{N+1},\widetilde{b})\text{ are oriented {\bf counter-}clockwise,}\\
                      1&\text{ otherwise.}
                  \end{cases}
\end{align*}

When building the snake graph $G_{\widetilde{T}}$, we note that as we progress from the bottom-left to the top right, 
the second tile is to the east (resp. north) of the first tile if $\epsilon_a = 0$ (resp. $1$).  
We use the following notation as shorthand for the weights that appear on the bottom and left edges of the first tile as well as the top and right edges of the last tile, 
in some order: $e_0 = \lambda_{\widetilde{a},c_0}$, $e_1 = \lambda_{\widetilde{a},c_1}$, $e_N = \lambda_{c_N,\widetilde{b}}$, and $e_{N+1} = \lambda_{c_{N+1},\widetilde{b}}$.
For example, $e_0$ is the weight of the  bottom edge (resp. left edge) of the first tile if $\epsilon_a = 0$ (resp. $\epsilon_a = 1$), 
and $e_1$ is the weight of left edge (or bottom edge) respectively.  See \Cref{fig:different_snake}. 
We will sometimes abuse notation and let $e_0$, $e_1$, $e_N$, and $e_{N+1}$ denote the corresponding arcs themselves.

\begin{figure}[h]
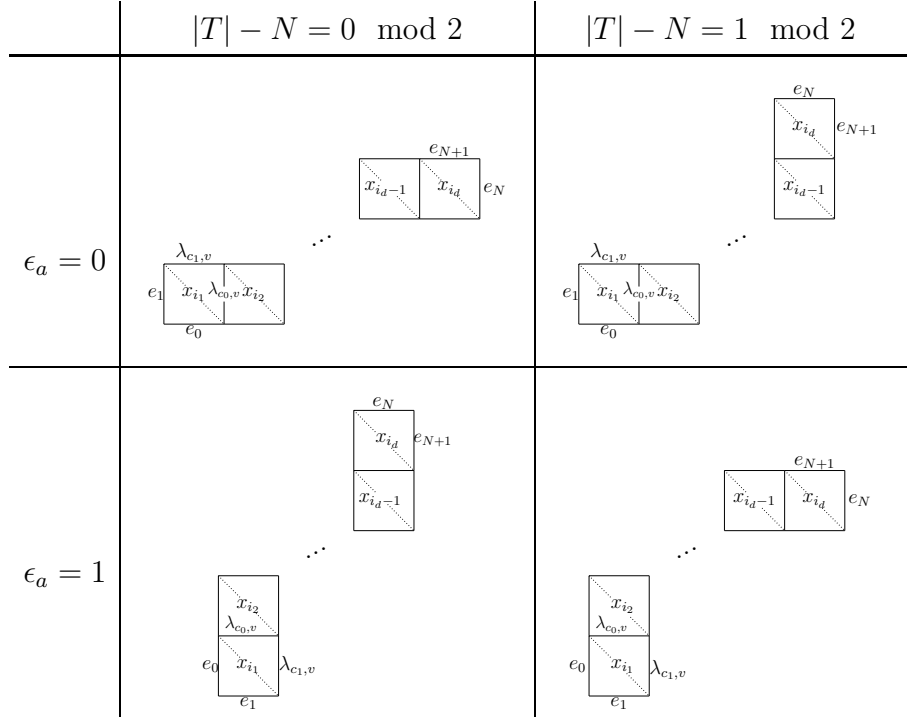

\begin{tabular}{c|c|c}
                     & $|T| - N = 0 \mod 2$               & $|T|-N = 1 \mod 2$                 \\ \hline
                     &                                    &                                    \\[-1em]
    $\epsilon_a = 0$ & \scalebox{0.8}{\tikzfig{snake-00}} & \scalebox{0.8}{\tikzfig{snake-01}} \\
                     &                                    &                                    \\[-1em] \hline
                     &                                    &                                    \\[-1em]
    $\epsilon_a = 1$ & \scalebox{0.8}{\tikzfig{snake-11}} & \scalebox{0.8}{\tikzfig{snake-10}}
\end{tabular}	
\caption{Different Snake Graphs as $\epsilon_a$ and $(|T| - N)$ vary, where $|T|$ is the number of triangles in $T$.}
\label{fig:different_snake}
\end{figure}

\begin {definition}
    For a snake graph $G$, let $D(G)$ denote the set of double dimer covers of $G$. 
    Also, let $D_{ab}(\widetilde{G})$, $D^{cd}(\widetilde{G})$, and $D_{ab}^{cd}(\widetilde{G})$
    denote the subsets of double dimer covers which includes (as sub-multisets) $\{e_a,e_b\}$, $\{e_c,e_d\}$, or $\{e_a,e_b,e_c,e_d\}$ respectively. 
    Here, $a$ and $b$ will always be $0$ or $1$ (the bottom/left edges of the first tile), and $c,d$ will always $N$ or $N+1$ (the top/right edges
    of the last tile).
\end {definition}

\begin{theorem} \label{thm:combo}
    The entries of $H_{a,b}$ each have combinatorial interpretations as weighted generating functions of double dimer covers of $G_{\widetilde{T}}$, 
    where each is subject to a restriction on the bottom-left and top-right tiles of $G_{\widetilde{T}}$.
    More precisely $H_{a,b}$ is given by the following matrix:
    \[ 
        \frac{1}{x_{i_2} \cdots x_{i_{d-1}}}
        \ospmatrix {\frac{1}{e_N}}{0}{0} {0}{\frac{(-1)^{\epsilon_b-1}}{x_{i_d}e_{N+1}}}{0} {0}{0}{\frac{1}{\sqrt{x_{i_d}e_Ne_{N+1}}}}
        \ospmatrix {-\widetilde{A}}     {-\widetilde{B}}     {\widetilde{\gamma}}
                   {-\widetilde{C}}     {-\widetilde{D}}     {\widetilde{\delta}}
                   {\widetilde{\alpha}} {\widetilde{\beta}} {\widetilde{E}}
        \ospmatrix {\frac{1}{e_0x_{i_1}}}{0}{0} {0}{\frac{(-1)^{\epsilon_a-1}}{e_1}}{0} {0}{0}{\frac{1}{\sqrt{e_0e_1x_{i_1}}}}
    \]
    such that
    \[
    \begin{array}{ccc}\displaystyle
    	\widetilde{A} = 
        \sum_{M \in D_{00}^{NN}(\widetilde{G})} \wt(M),\;\; & \displaystyle
        \widetilde{B} = 
        \sum_{M \in D_{11}^{NN}(\widetilde{G})} \wt(M),\;\; & \displaystyle
        \widetilde{\gamma} = 
        \sum_{M \in D_{01}^{NN}(\widetilde{G})} \wt(M)^*
    \end {array}
    \]
    \[
    \begin{array}{ccc}
        \displaystyle\widetilde{C} = 
        \sum_{M \in D_{00}^{N+1,N+1}(\widetilde{G})} \wt(M),\;\; &
        \displaystyle\widetilde{D} = 
        \sum_{M \in D_{11}^{N+1,N+1}(\widetilde{G})} \wt(M),\;\; &
        \displaystyle\widetilde{\delta} = 
        \sum_{M \in D_{01}^{N+1,N+1}(\widetilde{G})} \wt(M)^*
    \end {array}
    \]
    \[
    \begin{array}{ccc}
       \displaystyle  \widetilde{\alpha} = 
        \sum_{M \in D_{00}^{N,N+1}(\widetilde{G})} \pm \wt(M)^\dagger,
        & \displaystyle        
        \widetilde{\beta} = 
        \sum_{M \in D_{11}^{N,N+1}(\widetilde{G})} \pm \wt(M)^\dagger, &
        \displaystyle \widetilde{E} = \sum_{M \in D_{01}^{N,N+1}(\widetilde{G})} \pm  (\wt(M)^*)^\dagger 
    \end{array}
    \]

    where $\wt(M)$ denotes the weight of the double dimer cover (see \cite[Def. 4.4]{moz22}),
    and $(*)$ denotes the toggle operation on $\theta_{\widetilde{a}}$ while $(\dagger)$ indicates the toggle operation on $\theta_{\widetilde{b}}$.  
    In our cases, the toggle operation $(*)$ (resp.  $(\dagger)$) removes 
    $\theta_{\widetilde{a}}$ (resp. $\theta_{\widetilde{b}}$) from the corresponding term.  
    See \cite[Def. 5.6]{moz22} for the more general definition.
    
    The signs on the terms in $\widetilde{\alpha}$ and $\widetilde{\beta}$ are determined as in \Cref{rmk:local_signs}. That is, $\mathrm{wt}(M)$ is written
    in the positive order, followed by a substitution $\theta \mapsto -\theta$ for an appropriate subset of the odd variables (i.e. all those in the
    even-numbered fan segments). The $\widetilde{E}$-entry also potentially
    contains terms of both signs, but it is more complicated to specify.
\end{theorem}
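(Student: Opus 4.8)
The plan is to deduce \Cref{thm:combo} by combining two results already in hand: the closed matrix formula of \Cref{thm:generic}, and the double-dimer expansion of super $\lambda$-lengths and of their associated odd elements from \cite{moz22} (the super analogue of the snake-graph formula of \cite{ms10}), following the strategy of Sections~4--5 of \cite{mw13}. First I would write $H_{a,b}$ out explicitly using \Cref{thm:generic}, and then pull out, on the left and right, the two diagonal matrices in the statement --- whose entries are monomials in the boundary weights $e_0,e_1,e_N,e_{N+1}$ and the crossed-arc weights $x_{i_1},x_{i_d}$ (and their square roots in the third row and column) --- together with the overall scalar $1/(x_{i_2}\cdots x_{i_{d-1}})$. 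What is left is a ``core'' matrix of nine polynomials in the $\lambda$-lengths and $\mu$-invariants of $T$. Using \Cref{def:h_length} and \Cref{rmk:normalized_mu_relations} to clear the $h$-lengths and normalized $\mu$-invariants, a direct comparison identifies each core entry, up to the extracted monomial, as the numerator (in the sense of the snake-graph formula) of a super $\lambda$-length, or of one of the odd elements $\Td$, of an arc of the extended triangulation $\widetilde T$ joining a vertex of $\{\widetilde a, c_0, c_1\}$ to a vertex of $\{c_N,c_{N+1},\widetilde b\}$; for instance the $(1,2)$-entry $(-1)^{\epsilon_a}\lambda_{c_0,c_{N+1}}$ of \Cref{thm:generic} forces $-\widetilde B$ to be a fixed monomial multiple of $\lambda_{a,b}$.

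The main step is then to recognize these (monomial multiples of) $\lambda$-lengths and odd elements of $\widetilde T$ as the \emph{restricted} double-dimer generating functions $\widetilde A,\dots,\widetilde E$ over $\widetilde G = G_{\widetilde T}$. The mechanism is that in $\widetilde G$ the first tile corresponds to crossing $i_1 = (c_0,c_1)$, whose two incident triangles in $\widetilde T$ are the new triangle $(\widetilde a,c_0,c_1)$ and $(c_0,c_1,c_2)$; prescribing which of the pairs $\{e_0,e_0\}$, $\{e_1,e_1\}$, $\{e_0,e_1\}$ a double-dimer cover uses on the edges of $(\widetilde a,c_0,c_1)$ rigidifies and ``cuts off'' that triangle, so that the restricted covers are in weight-preserving bijection with the double-dimer covers of the snake graph in $T$ of the appropriate shorter arc --- $(c_0,\cdot)$, $(c_1,\cdot)$, or the diagonal ``crossing'' configuration --- after multiplication by the indicated boundary monomial. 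Running the identical analysis at the last tile with the triangle $(c_N,c_{N+1},\widetilde b)$ and the pairs among $\{e_N,e_{N+1}\}$, and invoking the expansion formula of \cite[Sec.~3--5]{moz22} for the shorter arc, produces exactly $\widetilde A,\dots,\widetilde E$; the arcs $i_2,\dots,i_{d-1}$ crossed in $T$ by that shorter arc supply the scalar denominator $1/(x_{i_2}\cdots x_{i_{d-1}})$. The toggle operations $(*)$ and $(\dagger)$ in the odd entries are precisely the restriction, in this cutting-off situation, of the general toggle operation \cite[Def.~5.6]{moz22} to the $\mu$-invariants $\theta_{\widetilde a}$ and $\theta_{\widetilde b}$ of the cut-off triangles, which (as the statement records) simply deletes $\theta_{\widetilde a}$ or $\theta_{\widetilde b}$ from the relevant term.

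It then remains to settle the signs. The even entries $\widetilde A,\widetilde B,\widetilde C,\widetilde D$ are numerators of even $\lambda$-lengths and their terms are manifestly nonnegative. For the third-column odd entries $\widetilde\gamma,\widetilde\delta$, the matrix-product computation of the corresponding odd elements in the proof of \Cref{thm:generic} uses only \Cref{eqn:super_ptolemy_mu_left_star} at each step, so by \cite[Thm.~6.2(b)]{moz22} all terms are positive, consistent with the $\wt(M)^*$ appearing there; for the third-row odd entries $\widetilde\alpha,\widetilde\beta$, the analogous computation uses \Cref{eqn:super_ptolemy_mu_right_star}, which becomes an instance of \Cref{eqn:super_ptolemy_mu_left_star} only after reversing all interior orientations and negating the odd variables in the even-numbered fan segments --- this is exactly the sign prescription of \Cref{rmk:local_signs}, which we adopt. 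The $(3,3)$-entry $\widetilde E$ interleaves steps of both kinds, so its term signs are not described by a single global substitution; we only pin down the value of $\widetilde E$ as a whole, which follows from the other eight entries together with \Cref{eq:osp1} (equivalently \Cref{eq:osp-cm}), exactly as in the discussion of the $(3,3)$-entry after \Cref{thm:generic}.

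The step I expect to be the real obstacle is the bookkeeping in the second paragraph: the argument must be carried out in each of the four cases indexed by $\epsilon_a\in\{0,1\}$ and $|T|-N\bmod 2\in\{0,1\}$ (which govern whether the second and the second-to-last tiles of $\widetilde G$ lie to the north or to the east, cf.\ \Cref{fig:different_snake}), and in each case one has to check that rigidifying the two extremal triangles produces \emph{exactly} the diagonal prefactor and postfactor matrices written in the statement --- in particular the half-integer powers $1/\sqrt{x_{i_d}e_Ne_{N+1}}$ and $1/\sqrt{e_0e_1x_{i_1}}$, which trace back to the $\sqrt{\phantom{x}}$ in the definition of $\Td$ --- and that the cut-off snake graphs really are the snake graphs, in $T$, of the arcs $(c_0,c_{N+1})$, $(c_1,c_{N+1})$, $(c_1,c_N)$, $(c_0,c_N)$ and their odd companions that appear in \Cref{thm:generic}.
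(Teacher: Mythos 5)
Your overall strategy coincides with the paper's for eight of the nine entries: start from the explicit formula of \Cref{thm:generic}, peel off the boundary monomials $e_0,e_1,e_N,e_{N+1},x_{i_1},x_{i_d}$, and identify each core entry with a restricted double-dimer generating function on $\widetilde G$ via the snake-graph expansion of \cite{moz22} (Theorem 6.2(a) for the even entries, Theorem 6.2(b) together with Lemma 5.8 for the odd entries in the third row and column), with the ``cascade of forced doubled edges'' bijections you describe and the sign bookkeeping of \Cref{rmk:local_signs}. That part of your plan is sound and matches the paper.

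The genuine gap is the $(3,3)$-entry. You claim the value of $\widetilde E$ ``follows from the other eight entries together with \Cref{eq:osp1}.'' What \Cref{eq:osp1} gives you is the \emph{algebraic} identity that the $(3,3)$-entry of $H_{a,b}$ equals $1+\alpha\beta$, where $\alpha,\beta$ are the $(3,1)$- and $(3,2)$-entries. The theorem, however, asserts that this quantity equals a specific signed sum over $D_{01}^{N,N+1}(\widetilde G)$ of toggled weights. Bridging these requires proving the combinatorial identity
\[
(-1)^{\epsilon_a-1}\Bigl(\sum_{M\in D_{00}^{N,N+1}}\wt(M)^\dagger\Bigr)\Bigl(\sum_{M\in D_{11}^{N,N+1}}\wt(M)^\dagger\Bigr)
=\wt(M_0)\sum_{M\in D_{01}^{N,N+1}\setminus\{M_0\}}(\wt(M)^\ast)^\dagger,
\]
where $M_0$ is the all-boundary double dimer cover with weight $\partial$. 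This is not automatic: it is established in the paper by a multi-case sign-reversing involution on pairs $(M_1,M_2)$ of double dimer covers, locating the first internal edge $i_t$ used, cancelling pairs when $i_t$ is doubled or when cycles can be swapped past an internal edge $i_s$, and splicing the surviving pairs into single elements of $D_{01}^{N,N+1}(\widetilde G)$ after deleting one copy of each boundary edge. This involution (and its inverse) is the most substantial piece of the proof and is entirely absent from your proposal; without it you have only shown that the $(3,3)$-entry equals $1+\alpha\beta$, not that it admits the stated double-dimer interpretation.
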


Note that even though the expression for $H_{ab}$ in \Cref{thm:combo} involves the quantities $e_0$, $e_1$, $e_N$, and $e_{N+1}$, 
after reducing each of the nine matrix entries to lowest terms, such factors will always cancel.  
This is consistent with the fact that $H_{ab}$ is defined by the arc $(a,b)$ that is contained in the original triangulation $T$, 
where the triangles containing $\widetilde{a}$ and $\widetilde{b}$ do not appear.   

\begin{figure}[h]
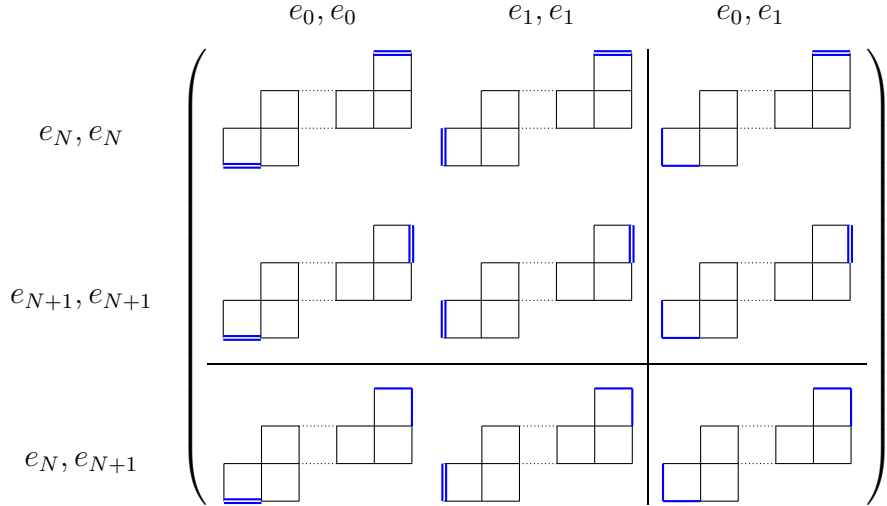

    \begin{tabular}{cc}
    &$e_0,e_0$ \hspace{4.2em} $e_1,e_1$\hspace{4.2em} $e_0,e_1$\\
    &\\[-1.3em]
    & \multirow{3}{*}{$\left(
            \begin{array}{cc|c}   
                    \tikzfig{DD-A}&\tikzfig{DD-B}&\tikzfig{DD-gamma}\\
                    &&\\
                     \tikzfig{DD-C}&\tikzfig{DD-D}&\tikzfig{DD-delta}\\
                    &&\\[-1em]
                    \hline
                    &&\\[-1em]
                    \tikzfig{DD-alpha}&\tikzfig{DD-beta}&\tikzfig{DD-E}
            \end{array}
    \right)$}\\
    $e_N,e_N$&\\
    &\\[1.8em]
    $e_{N+1},e_{N+1}$&\\
    &\\[1.8em]
    $e_N,e_{N+1}$&
    \end{tabular}
    \caption{Graphical interpretations of entries of $H_{ab}$. Here the case of $\epsilon_a = 0$, $|T|-N=1\mod 2$ is illustrated.}
    \label{fig:dd-entries}
\end{figure}

\begin{remark}
    Comparing the entries of the top-left $2$-by-$2$ submatrix  
    with the entries in the matrix appearing in Proposition 5.5 of \cite{mw13},
    we see that our new result matches the expected formulas when we reduce to the classical case, up to 
    using the identifications $x_a = e_0$, $x_b = e_1$, $x_w = e_N$ and $x_z = e_{N+1}$. 
\end{remark}

We now prove \Cref{thm:combo}.

\begin{proof}
We begin with the $(1,2)$-entry of $H_{ab}$, namely  $(-1)^{\epsilon_a} \lambda_{c_0,c_{N+1}}$.  
By Theorem 6.2(a) of \cite{moz22}, $\lambda_{c_0,c_{N+1}}$ can be expressed as the generating function counting double dimer covers in the
snake graph $G$ associated with the arc $(c_0,c_{N+1})$:
\[ \lambda_{c_0,c_{N+1}} = \frac{\sum_{M \in D(G)} \wt(M)} {x_{i_2}\cdots x_{i_{d-1}}}, \] 
where $i_1, i_2, \dots i_{d-1}, i_d$ label the arcs crossed by the arc $(\widetilde{a},\widetilde{b})$ in order.  
In particular, arc $(c_0,c_{N+1})$ crosses the same list of arcs in order, except for $i_1 = (c_0,c_1)$ and $i_d = (c_N, c_{N+1})$.  
We have a bijection between $D(G)$ and $D_{11}^{NN}(\widetilde{G})$ by appending a tile on either side of $G$ (corresponding to arcs $i_1$ and $i_d$, respectively), 
and adjoining the doubled edges $e_1$ and $e_N$.
See Figure \ref{fig:5.1a}.  After dividing through by $e_1 e_N$, accounting for the weight of the doubled edges on the first and last tiles, 
it follows that $\lambda_{c_0,c_{N+1}} = \frac{\sum_{M \in D_{11}^{NN}(\widetilde{G})} \wt(M)} { (x_{i_2}\cdots x_{i_{d-1}})e_1 e_N}$ as desired.

\begin{figure}
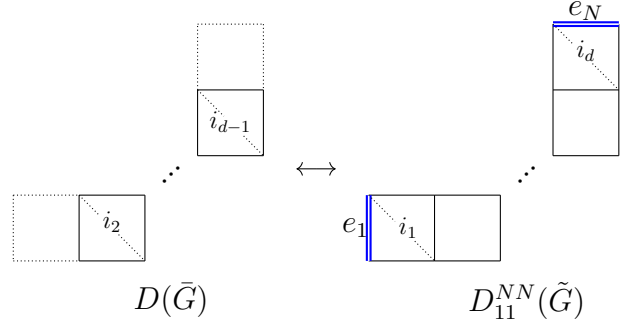

\tikzfig{thm5-1-pf-a}
\caption{Illustrating part of the proof of \Cref{thm:combo} for the $(1,2)$-entry.}
\label{fig:5.1a}
\end{figure}

We next consider the $(2,1)$-entry of $H_{a,b}$, namely $(-1)^{\epsilon_b}{\l{c_1}{c_N}\over \lambda_{c_0,c_1}\l {c_N}{c_{N+1}}}$.  
Assume that inside of the extended triangluation $\widetilde{T}$, the fan center $c_1$ has $k \geq 2$ internal arcs incident to it, 
including the arcs $i_1 = (c_0,c_1)$ and $i_k = (c_1,c_2)$, while the fan center $c_N$ has $\ell \geq 2$ internal arcs incident to it, including 
$i_{d-\ell+1} = (c_{N-1},c_N)$ and $i_d = (c_N, c_{N+1})$.  We let $\overline{G}$ denote the snake graph associated to the arc $(c_1,c_N)$, 
noting that $\overline{G}$ is a connected subgraph in the middle of $G$.  Then, as above, Theorem 6.2(a) of \cite{moz22} implies that 
$\lambda_{c_1,c_{N}}$ equals $\frac{\sum_{M \in D(\overline{G})} \wt(M)} { x_{i_{k+1}}\cdots x_{i_{d-\ell}}}$.

We have a bijection between $D(\overline{G})$ and $D_{00}^{N+1,N+1}(\widetilde{G})$ by appending tiles on both sides of $\overline{G}$ 
(corresponding to the zig-zag of tiles for arcs $i_1, i_2, \dots, i_k$ on the one hand, and the zig-zag of tiles 
for arcs $i_{d-\ell+1},\dots, i_{d-1}, i_d$ on the other), and adjoin the doubled edges $e_0$ and $e_N$ on tiles $i_1$ and $i_d$, respectively.  
This leads to a cascade of doubled edges from both ends of $\widetilde{G}$, giving a unqiue way to extend a given double dimer cover of $\overline{G}$.  
See \Cref{fig:5.1b}.  We also divide through by $e_0 e_{N+1}$, as well as by $x_{i_2} x_{i_3} \cdots x_{i_k}$ and $x_{i_{d-\ell+1}} \cdots x_{i_{d-2}} x_{i_{d-1}}$ 
the latter of which account for the two cascades of doubled edges.  
Noting the equalities $(c_0,c_1) = i_1$ and $(c_N, c_{N+1}) = i_d$, it follows that 
$(-1)^{\epsilon_b}{\l{c_1}{c_N}\over \lambda_{c_0,c_1}\l {c_N}{c_{N+1}}}
= (-1)^{\epsilon_b}\frac{\sum_{M \in D_{00}^{N+1,N+1}(\widetilde{G})} \wt(M)} { (x_{i_1}\cdots x_{i_{d}})e_0 e_{N+1}}$.

\begin{figure}
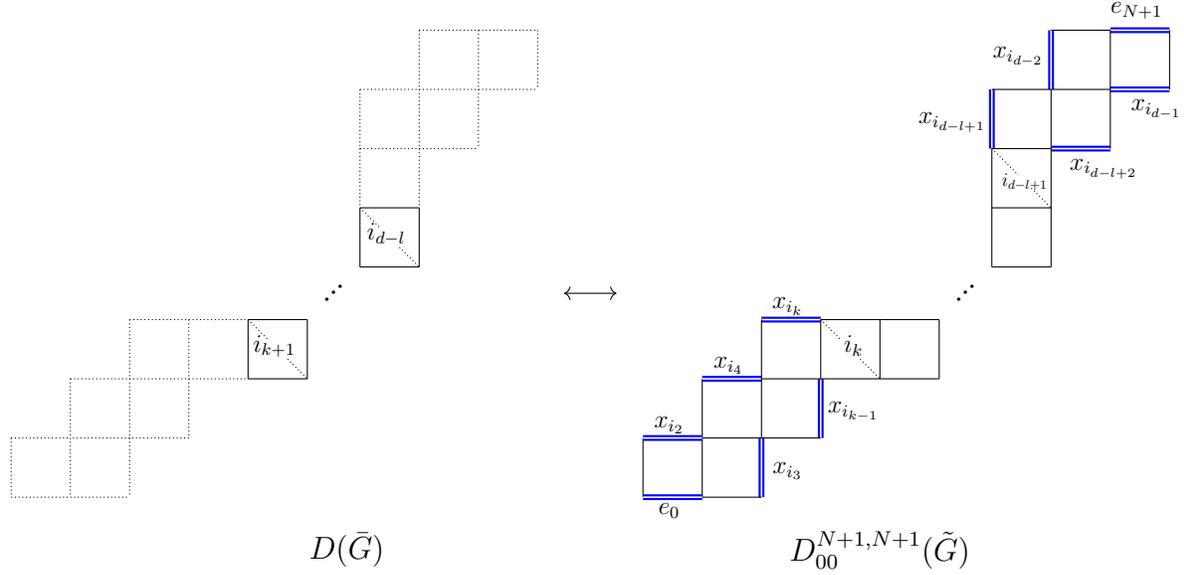

\tikzfig{thm5-1-pf-b}
\caption{Illustrating part of the proof of \Cref{thm:combo} for the $(2,1)$-entry.}
\label{fig:5.1b}
\end{figure}

The proofs for the validity of the $(1,1)$- and $(2,2)$-entries are analogous and involve combinations of the previous two cases.

To prove the result for the $(1,3)$-, $(2,3)$-, $(3,1)$-, $(3,2)$-, and $(3,3)$-entries 
takes further work, and combining together Theorem 6.2(b) and Lemma 5.8, both of \cite{moz22}, as we now show:

Consider the $(1,3)$-entry of $H_{ab}$, namely 
$\Td^{c_{N+1}}_{c_0,c_1}  = \sqrt{ \frac{\lambda_{c_0, c_{N+1}} \lambda_{c_1,c_{N+1}}}{\lambda_{c_0,c_1}}} \boxed{c_0, c_1, c_{N+1}}$.

We wish to apply Theorem 6.2(b) of \cite{moz22} here to simplify this expression, but before we can do so we need to redraw the extended 
triangulation $\widetilde{T}$ so that it matches the illustration in Figure 13(a) of \cite{moz22} so that $\boxed{c_0, c_1, c_{N+1}} = \varphi = \boxed{ijk}$.  
In particular, let $v$ denote the endpoint of arc $i_2 = (c_1, v)$ so that the first two triangles of $\widetilde{T}$ are
$(\widetilde{a},c_0,c_1)$ and $(c_0,c_1, v)$ respectively.  Then in the notation of Figure 13(a) of \cite{moz22}, we have 
$a = (c_0,c_1)$, $b = (c_0,v)$, $d = (c_1,c_{N+1})$, $e=(c_1,v)$, and $f = (c_0,c_{N+1})$. 
If the first diagonal $e$ is oriented incorrectly, we can reverse it and replace $\varphi \mapsto -\varphi$.
As discussed in Remark 2.6 of \cite{moz22}, this does not change the positive ordering.

Theorem 6.2(b) of \cite{moz22} then yields 
\[ 
    \sqrt{\lambda_{c_1,c_{N+1}}\lambda_{c_0, c_{N+1}}} \boxed{c_0, c_1, c_{N+1}} =
    \frac{1}{x_{i_2}\cdots x_{i_{d-1}}} \sqrt{ \frac{\lambda_{c_1,v}}{\lambda_{c_0,v}}} \sum_{M \in D_0(G)} \wt(M)^{(*2)} 
\]
where $G$ is the snake graph associated to the arc $(c_0, c_{N+1})$ (just as above) 
and $D_0(G) = D_{00}(G) \cup D_{01}(G)$ denotes the subset of double dimer covers that uses edge $(c_0,v)$ as a single or doubled edge on the first tile of $G$.
\footnote{In \cite{moz22}, Theorem 6.2(b) uses the notations $D_t(G)$ and $D_r(G)$. The new notations used here allow a more uniform treatment.} 

The notation $\wt(M)^{(*2)}$ also signifies that the weight of the double dimer cover $M \in D_0(G)$ is altered by toggling $\boxed{c_0,c_1,v}$, 
the $\mu$-invariant corresponding to the lower left triangle of the first tile of $G$ (second tile of $\widetilde{G}$). 

By Lemma 5.8 of \cite{moz22}, there is a bijection between $D_0(G)$ and in $D_{01}(G^+)$ where $G^+$ is the subgraph of $\widetilde{G}$ 
that contains tiles $i_1,i_2,\dots, i_{d-1}$ (i.e. it contains subgraph $G$ plus tile $i_1$), where the weights are related by
$\sum_{M \in D_0(G)} \wt(M)^{(*2)} = \sqrt{\frac{\lambda_{c_0,v}} {e_0 e_1 \lambda_{c_1,v}} } \sum_{M \in D_{01}(G^+)} \wt(M)^*$
where $(*2)$ toggles the weight by $\boxed{c_0,c_1,v}$ and 
$(*)$ toggles the weight by $\theta_{\widetilde{a}} = \boxed{\widetilde{a},c_0,c_1}$.

The quantity  $\sqrt{ \frac{ \lambda_{c_0,v}} {e_0 e_1 \lambda_{c_1,v}} }$ is based on the edge weights on the first tile, and recalling that arc $i_2 = (c_1,v)$.

Putting this altogether, and remembering that $\lambda_{c_0,c_1} = x_{i_1}$, we get
$$\sqrt{ \frac{\lambda_{c_1,c_{N+1}}\lambda_{c_0, c_{N+1}}}{\lambda_{c_0,c_1}}}
\boxed{c_0, c_1, c_{N+1}} = 
\frac{1}{\sqrt{x_{i_1}} (x_{i_2}\cdots x_{i_{d-1}})}
\sqrt{ \frac{\lambda_{c_1,v}}{\lambda_{c_0,v}}} \sqrt{ \frac{ \lambda_{c_0,v}} {e_0 e_1 \lambda_{c_1,v}} } \sum_{M \in D_{01}(G^+)} \wt(M)^*.$$
Furthermore, there is another straightforward bijection between $D_{01}(G^+)$ and $D_{01}^{NN}(\widetilde{G})$
by adjoining the last tile $i_d$, and utilizing the edge $e_N$ as a double edge.  See \Cref{fig:5.1c}.
Dividing through by this contribution, and noting that the weight of the single forced edges on the first tile is $\sqrt{e_0e_1}$, 
we thus conclude that the $(1,3)$-entry of $H_{a,b}$ is 
\[ \frac{1}{ \sqrt{e_0~e_1} \sqrt{x_{i_1}}(x_{i_2}\cdots x_{i_{d-1}})e_N}\sum_{M \in D_{01}^{NN}(\widetilde{G})} \wt(M)^* \] 
as desired.

\begin{figure}
\tikzfig{thm5-1-pf-c}
\caption{Illustrating part of the proof of \Cref{thm:combo} for the $(1,3)$-entry.}
\label{fig:5.1c}
\end{figure}

We use an analogous argument to verify the formulas for the $(2,3)$-, $(3,1)$-, and $(3,2)$-entries,
noting that we must sometimes divide by $\lambda_{c_0,c_1} = x_{i_1}$ or $\lambda_{c_N,c_{N+1}} = x_{i_d}$ to get the formula.
Also, as mentioned in \Cref{rmk:local_signs}, the terms in $\widetilde{\alpha}$ and $\widetilde{\beta}$ will sometimes have signs.
This is because Theorem 6.2(b) from \cite{moz22} implicitly assumes one of two possible choices of positive ordering
of the odd variables, and we need to use the opposite choice when applying the theorem to the $(3,1)$- and $(3,2)$-entries.

For the case of the $(3,3)$-entry of $H_{a,b}$, we need to show that 
$$1 + (-1)^{\epsilon_a-1}{1\over\lambda_{c_0,c_1}}\td {c_1}{c_N}{c_{N+1}}\td {c_0} {c_N}{c_{N+1}}
= 1 + (-1)^{\epsilon_b-1}{1\over \l{c_N}{c_{N+1}}}\Td^{c_{N+1}}_{c_0,c_1} \Td^{c_{N}}_{c_0,c_1}$$
equals  
$$\frac{1}{  (x_{i_2}\cdots x_{i_{d-1}})\sqrt{x_{i_1} x_{i_d}} \sqrt{e_0 e_1 e_N e_{N+1}}}{\sum\limits_{M \in D_{0,1}^{N,N+1}(\widetilde{G})} (\wt(M)^*)^\dagger} .$$

Since we have already shown that the $(3,1)$- and $(3,2)$-entries each have the desired combinatorial interpretations, then by \Cref{eq:osp1}, it remains to show that
\[ \widetilde{E} - \partial = \frac{(-1)^{\epsilon_a - 1}}{\partial} \widetilde{\alpha} \widetilde{\beta}  \]
where $\partial := \sqrt{ e_0 e_1 x_{i_1} (x_{i_2}^2\cdots x_{i_{d-1}}^2) x_{i_d} e_N e_{N+1}}$ is the product of the (square roots of the) edge weights of
all the outer boundary sides of $\widetilde{G}$.

Note that among the double dimer covers in $D_{0,1}^{N,N+1}(\widetilde{G})$, there is the unique one that consists of a single cycle comprised of the entire boundary of $\widetilde{G}$.  
Based on the definition of edge weights on $\widetilde{G}$, this will contribute to $\widetilde{E}$ a weight of $((\partial \, \theta_{\tilde{a}} \theta_{\tilde{b}})^\ast)^\dagger = \partial$.
This means the left-hand side, $\widetilde{E} - \partial$, is simply the sum over $D_{01}^{N,N+1}$ minus this one special element. 

Therefore, we need to show that
\[ 
    (-1)^{\epsilon_a-1}\left( \sum_{M \in D_{00}^{N,N+1}} \wt(M)^\dagger \right) \left( \sum_{M \in D_{11}^{N,N+1}} \wt(M)^\dagger \right) 
    = \wt(M_0) \sum_{M \in D_{01}^{N,N+1} \setminus \{M_0\}} (\wt(M)^\ast)^\dagger 
\]
where $M_0$ the special double dimer cover mentioned above (with weight $\wt(M_0) = \partial$).
We will do this by seeing that some pairs of terms in the product on the left-hand side cancel, and that the remaining terms are in bijection
with the terms on the right-hand side, with weights differing by a factor of $\partial$.

Let $(M_1,M_2)$ be a pair of double dimer covers, where
$M_1$ is in the $\widetilde{\alpha}$ sum (over $D_{00}^{N,N+1}$) and $M_2$ is from the $\widetilde{\beta}$ sum (over $D_{11}^{N,N+1}$).
Let $i_t$ denote the first internal (non-boundary) edge of $\widetilde{G}$ which is used in either $M_1$ or $M_2$. 

First consider the case when $i_t$ is used as a double edge in either $M_1$ or $M_2$. 
This is illustrated in \Cref{fig:i_t-doubled}.
Note that it cannot be used as a double edge in both
because it is the \emph{first} occurrence of an internal edge, and so at least one of the $M_i$'s uses an adjacent boundary edge twice.
Let us assume (for simplicity of the following exposition) that $i_t$ belongs to $M_1$.
In this case, the next two boundary edges adjacent to $i_t$ (those immediately to the right or above) can be used by neither $M_1$ nor $M_2$. 
Thus we can swap the portions of $M_1$ and $M_2$ to the right of $i_t$, and obtain a new pair $(M_1',M_2')$ using the same edges. However, since
we swapped the portions at the \emph{end}, the odd variables corresponding to the cycles ending on the last tile now are multiplied in the opposite order.
Therefore $\wt(M_1)^\dagger \wt(M_2)^\dagger + \wt(M_1')^\dagger \wt(M_2')^\dagger = 0$.

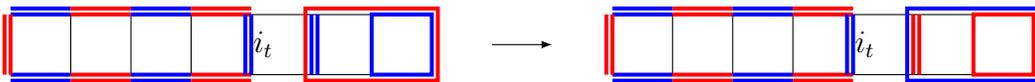
\begin {figure}[h!]
\centering
\begin {tikzpicture} [scale=0.8]
    \draw (0,0) grid (7,1);

    \foreach \x in {0,2} {
        \foreach \y in {-0.1, 0, 1, 1.1} {
            \draw[blue, line width=1.5]      (\x,\y)   -- (\x+1,\y);
            \draw[color=red, line width=1.5] (\x+1,\y) -- (\x+2,\y);
        }
    }

    \foreach \x in {-0.1, 0} {
        \draw[color=red, line width=1.5] (\x,0) -- (\x,1);
    }

    \foreach \x in {3.9, 4, 5, 5.1} {
        \draw[blue, line width=1.5] (\x,0) -- (\x,1);
    }

    \draw[blue, line width=1.5] (7,1) -- (6,1) -- (6,0) -- (7,0) -- cycle;
    \draw[color=red, line width=1.5] (7.1,1.1) -- (4.9,1.1) -- (4.9,-0.1) -- (7.1,-0.1) -- cycle;

    \draw (4.2, 0.5) node {$i_t$};

    \draw[-latex] (8,0.5) -- (9,0.5);

    \begin {scope}[shift={(10,0)}]
        \draw (0,0) grid (7,1);

        \foreach \x in {0,2} {
            \foreach \y in {-0.1, 0, 1, 1.1} {
                \draw[blue, line width=1.5]      (\x,\y)   -- (\x+1,\y);
                \draw[color=red, line width=1.5] (\x+1,\y) -- (\x+2,\y);
            }
        }

        \foreach \x in {-0.1, 0, 5, 5.1} {
            \draw[color=red, line width=1.5] (\x,0) -- (\x,1);
        }

        \foreach \x in {3.9, 4} {
            \draw[blue, line width=1.5] (\x,0) -- (\x,1);
        }

        \draw[color=red, line width=1.5] (7,1) -- (6,1) -- (6,0) -- (7,0) -- cycle;
        \draw[blue, line width=1.5] (7.1,1.1) -- (4.9,1.1) -- (4.9,-0.1) -- (7.1,-0.1) -- cycle;

        \draw (4.2, 0.5) node {$i_t$};
    \end {scope}
\end {tikzpicture}
\caption {The case when $i_t$ is a double edge. $M_1$ is in blue, and $M_2$ is in red.}
\label {fig:i_t-doubled}
\end {figure}

In all the remining terms, $i_t$ is not used as a doubled edge, so it is used only once in either $M_1$ or $M_2$. Note that if it is used as a single edge in
both $M_1$ and $M_2$ then we would have two cycles (one from $M_1$ and one from $M_2$) which contribute the same odd variable. But since $\theta^2 = 0$
for all odd variables, such terms would contribute a weight of zero. So we do not need to consider such configurations. So we only consider the case that $i_t$
is used once in exactly one of the $M_i$'s (and is used either twice or not at all in the other). Again, assume for the sake of exposition that $i_t$
is used only once in $M_1$.

At this point we further divide into two cases: either the cycle of $M_1$ beginning at $i_t$ continues until the last tile of $\widetilde{G}$, or not.

Consider first the former case, which is pictured in \Cref{fig:i_t-cycle-to-the-end}. 
Note that in the union $M_1 \cup M_2$, all boundary edges are used at least once. Indeed, all boundary edges before $i_t$
are used twice, and all boundary edges after $i_t$ are used in the cycle beginning at $i_t$. Therefore the product of the weights of $M_1$ and $M_2$
is divisible by $\partial$, and what remains after deleting one instance of each boundary edge is a double dimer cover with at least two cycles: 
one starting at the first tile and ending at $i_t$, and one ending at the last tile of $\widetilde{G}$ (coming from $M_2$).
This is precisely the type of configurations counted by $\widetilde{E} - \partial$.

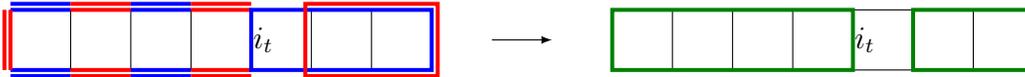
\begin {figure}[h!]
\centering
\begin {tikzpicture} [scale=0.8]
    \draw (0,0) grid (7,1);

    \foreach \x in {0,2} {
        \foreach \y in {-0.1, 0, 1, 1.1} {
            \draw[blue, line width=1.5]      (\x,\y)   -- (\x+1,\y);
            \draw[color=red, line width=1.5] (\x+1,\y) -- (\x+2,\y);
        }
    }

    \foreach \x in {-0.1, 0} {
        \draw[color=red, line width=1.5] (\x,0) -- (\x,1);
    }

    \draw[blue, line width=1.5] (7,1) -- (4,1) -- (4,0) -- (7,0) -- cycle;
    \draw[color=red, line width=1.5] (7.1,1.1) -- (4.9,1.1) -- (4.9,-0.1) -- (7.1,-0.1) -- cycle;

    \draw (4.2, 0.5) node {$i_t$};

    \draw[-latex] (8,0.5) -- (9,0.5);

    \begin {scope}[shift={(10,0)}]
        \draw (0,0) grid (7,1);

        \draw[green!50!black, line width=1.5] (0,0) -- (4,0) -- (4,1) -- (0,1) -- cycle;
        \draw[green!50!black, line width=1.5] (5,0) -- (7,0) -- (7,1) -- (5,1) -- cycle;

        \draw (4.2, 0.5) node {$i_t$};
    \end {scope}
\end {tikzpicture}
\caption {The case when $i_t$ is part of a cycle going to the last tile ($M_1$ in blue, $M_2$ in red). 
Removing one copy of each boundary edge gives an element of $\widetilde{E}$ (in green).}
\label {fig:i_t-cycle-to-the-end}
\end {figure}

In the latter case (when the cycle beginning at $i_t$ does \emph{not} extend all the way to the last tile of $\widetilde{G}$), then let $i_s$
be the internal edge which is either the top or right edge of the last tile of this cycle. We now consider the different cases by looking at the two boundary edges
immediately to the right/above $i_s$. There are three cases, depending on if these boundary edges are used once, twice, or not at all by $M_2$.

In the case that the boundary edges adjacent to $i_s$ are not used at all in either $M_1$ or $M_2$, then as described earlier (and pictured in \Cref{fig:i_t-doubled}), 
we may swap the parts of $M_1$ and $M_2$ occuring after $i_s$ to get another pair 
$(M_1',M_2')$ whose product of weights cancels with $(M_1,M_2)$. 

If these boundary edges are used twice by $M_2$ then replacing $i_s$ 
with a double edge and looking at the truncated snake graph from $i_s$ to the end, we are in the same situation we started with: the truncated $M_1$ has a double edge on one side
of the first tile (either the left or bottom), and the truncated $M_2$ has a double edge on the other, while both still end with cycles. Therefore we may repeat the
argument up to this point, looking for the next occurrence of an internal edge used by either $M_1$ or $M_2$, and finding either another term that cancels, or concluding that
this product $\frac{1}{\partial} \wt(M_1)^\dagger \wt(M_2)^\dagger$ represents a term from $\widetilde{E} - \partial$.

\begin {figure}[h!]
\centering
\begin {tikzpicture} [scale=0.8]
    \draw (0,0) grid (7,1);

    \foreach \y in {-0.1, 0, 1, 1.1} {
        \draw[blue, line width=1.5]      (0,\y)   -- (1,\y);
        \draw[color=red, line width=1.5] (1,\y) -- (2,\y);
    }

    \foreach \y in {-0.1, 0, 1, 1.1} {
        \draw[color=red, line width=1.5] (4,\y) -- (5,\y);
    }

    \foreach \x in {-0.1, 0} {
        \draw[color=red, line width=1.5] (\x,0) -- (\x,1);
    }

    \draw[blue, line width=1.5] (2,0) -- (4,0) -- (4,1) -- (2,1) -- cycle;
    \draw[color=red, line width=1.5] (6,0) -- (7,0) -- (7,1) -- (6,1) -- cycle;
    \draw[blue, line width=1.5] (7.1,1.1) -- (4.9,1.1) -- (4.9,-0.1) -- (7.1,-0.1) -- cycle;

    \draw (4.2, 0.5) node {$i_s$};
    \draw (2.2, 0.5) node {$i_t$};

    \draw[-latex] (8,0.5) -- (9,0.5);

    \begin {scope}[shift={(10,0)}]
        \draw[dashed] (0,0) grid (4,1);
        \draw         (4,0) grid (7,1);

        \foreach \x in {3.9, 4} {
            \draw[blue, line width=1.5] (\x,0) -- (\x,1);
        }

        \foreach \y in {-0.1, 0, 1, 1.1} {
            \draw[color=red, line width=1.5] (4,\y) -- (5,\y);
        }

        \draw[color=red, line width=1.5] (6,0) -- (7,0) -- (7,1) -- (6,1) -- cycle;
        \draw[blue, line width=1.5] (7.1,1.1) -- (4.9,1.1) -- (4.9,-0.1) -- (7.1,-0.1) -- cycle;

        \draw (4.25, 0.5) node {$i_s$};
    \end {scope}
\end {tikzpicture}
\caption {The case when $i_t$ is part of a cycle that does \emph{not} go to the last tile ($M_1$ in blue, $M_2$ in red). 
Removing the part of the picture before $i_s$ results in a similar situation, in a smaller graph.}
\label {fig:i_t-cycle-not-to-the-end}
\end {figure}
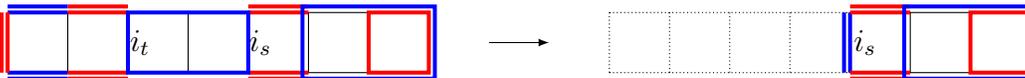

The final case that has not been considered is when $M_2$ uses the boundary sides adjacent to $i_s$ once (not doubled). As mentioned before, $M_2$ cannot have a cycle
beginning or ending adjacent to $i_s$ (else the weight would be zero). Therefore we need only consider the case that $M_2$ has a cycle beginning before $i_s$ and
ending after $i_s$. Let $i_{s'}$ be the internal edge on the end of this cycle of $M_2$. We continue the current argument with $i_{s'}$ instead of $i_s$. If $i_{s'}$
is not on the boundary of the last tile, we continue to look at the cases of the boundary sides adjacent to $i_{s'}$. Finally, if $i_{s'}$ is on the boundary of the
last tile (which must eventually happen, since both $M_1$ and $M_2$ are assumed to end with cycles), then we are back in the earlier case and this gives a term
from $\widetilde{E} - \partial$.
\begin {figure}[h!]
\centering
\begin {tikzpicture} [scale=0.8]
    \draw (0,0) grid (7,1);

    \foreach \y in {-0.1, 0, 1, 1.1} {
        \draw[blue, line width=1.5]      (0,\y)   -- (1,\y);
        \draw[color=red, line width=1.5] (1,\y) -- (2,\y);
    }

    \draw[color=red, line width=1.5] (3,-0.1) -- (5,-0.1) -- (5,1.1) -- (3,1.1) -- cycle;

    \foreach \x in {-0.1, 0} {
        \draw[color=red, line width=1.5] (\x,0) -- (\x,1);
    }

    \draw[blue, line width=1.5] (2,0) -- (4,0) -- (4,1) -- (2,1) -- cycle;
    \draw[color=red, line width=1.5] (6,0) -- (7,0) -- (7,1) -- (6,1) -- cycle;
    \draw[blue, line width=1.5] (7.1,1.1) -- (4.9,1.1) -- (4.9,-0.1) -- (7.1,-0.1) -- cycle;

    \draw (2.2, 0.5) node {$i_t$};
    \draw (4.2, 0.5) node {$i_s$};
    \draw (5.25, 0.5) node {$i_{s'}$};
\end {tikzpicture}
\caption {The case when the cycle starting at $i_t$ overlaps with a cycle from $M_2$ ($M_1$ in blue, $M_2$ in red). 
We can continue the argument with $i_{s'}$ instead of $i_s$.}
\label {fig:i_s-prime}
\end {figure}
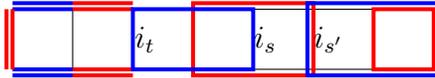

The inverse map, which shows that each term of $\tilde{E}-\partial$ can be written uniquely as the product of terms from $\alpha$ and $\beta$, 
can be constructed using an analysis similar to the above argument. 
\end{proof}

\section{Super Fibonacci Numbers Revisited}
\label{sec:Fib}

In \cite{moz22}, we used the decorated super Teichm\"{u}ller space of an annulus to find a sequence of $\lambda$-lengths 
satisfying a recurrence which generalizes the Fibonacci sequence. This is in the same spirit as Ovsienko's ``\emph{shadow sequences}'' \cite{ovsienko_22},
although our shadow of the Fibonacci sequence differs from his\footnote{Note that Ovsienko's shadow sequence for the Fibonacci numbers actually coincides with the $g_n$'s defined in \cite[Sec. 11]{moz22}.}.

In this section, we revisit these ``\emph{super Fibonacci numbers}'' from the point of view of the matrix formulas presented in the current paper.

\begin{figure}
\begin{tikzpicture}[scale=0.7]
    \node () at (-1,-1.5) {$\sigma$};
    \node () at (1,-1) {$\theta$};

    \draw (0,0) circle (1);
    \draw (0,0) circle (2.5);

    \draw [-->-, blue] (0,-1) -- (0,-2.5);

    \draw[-->-, color=red, domain=0:1, samples=100] plot ({(2.5-1.5*\x)*cos(2*pi*\x r - pi/2 r}, {(2.5-1.5*\x)*sin(2*pi*\x r - pi/2 r)});

    \draw[densely dashed, domain=0:1, samples=100] plot ({(2.5-1.5*\x)*cos(6*pi*\x r - pi/2 r}, {(2.5-1.5*\x)*sin(6*pi*\x r - pi/2 r)});
\end{tikzpicture}
\begin{tikzpicture}[scale=0.6,every node/.style={sloped,allow upside down}]
    \draw (0,-2.5) node {};

    \draw (-7.5,2)  -- (7.5,2);
    \draw (-7.5,-2) -- (7.5,-2);

    \foreach \x in {-6, -2, 2}
        \draw (\x+0.5, 2-0.5) node {$\theta$};

    \foreach \x in {-2, 2, 6}
        \draw (\x-0.5, -2+0.5) node {$\sigma$};

    \draw (-7.75, 0) node {$\cdots$};
    \draw (7.75,  0)  node {$\cdots$};

    \foreach \x in {-6, -2, 2, 6}
        \draw [style=blue] (\x,2) to node {\midarrow}  (\x,-2);
    \foreach \x in {-6, -2, 2}
        \draw [color=red] (\x,-2) -- node {\midarrow} (\x+4,2);

    \foreach \x in {-6, -2, 2, 6} {
        \draw [fill=black] (\x,2) circle (0.05);
        \draw [fill=black] (\x,-2) circle (0.05);
    }

    \draw ($(-6,-2) + (60:0.4)$)  --  ($(-2,2) + (-150:0.4)$);   
    \foreach \x in {-2, 2} {
        \draw ($(\x,2) + (-150:0.4)$) arc (-150:-105:0.4);           
        \draw ($(\x,2) + (-105:0.4)$) --  ($(\x,-2) + (105:0.4)$);   
        \draw ($(\x,-2) + (105:0.4)$) arc (105:60:0.4);              
        \draw ($(\x,-2) + (60:0.4)$)  --  ($(\x+4,2) + (-150:0.4)$);   
    }
\end{tikzpicture}
\caption{Left: Triangulation of an annulus. Right: its universal cover.}
\label{fig:annulus}
\end{figure}
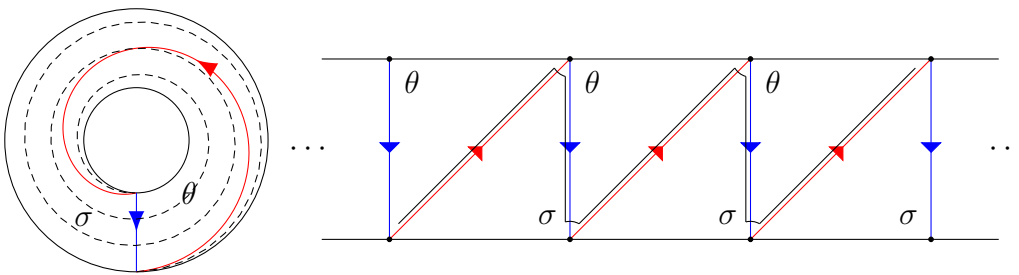

Consider an annulus with one marked point on each boundary component, and the oriented triangulation pictured in \Cref{fig:annulus},
where all $\lambda$-lengths are equal to $1$.
Let $z_n$ be $\lambda$-length of the arc connecting the two marked points which winds around the annulus $n-2$ times.
That is, $z_1=z_2=1$ are the diagonals of the triangulation (in blue and red in \Cref{fig:annulus}), $z_3$ circles once, $z_4$ circles twice, etc.
For example, $z_4$ is shown as a dashed line in the left picture of \Cref{fig:annulus}.
In \cite{moz22}, we showed that these satisfy the recurrence
\[ z_n = (3+2 \sigma \theta) z_{n-1} - z_{n-2} - \sigma \theta, \]
which generalizes a recurrence satisfied by the sequence of every-other Fibonacci number, i.e.
\[ f_n = 3 f_{n-2} - f_{n-4} \]
Furthermore, $z_n$ ``counts'' double dimer configurations on the snake graph $G_{2n-5}$, which consists of a horizontal row of $2n-5$ boxes.
By this we mean $z_n$ is the weight generating function of double dimer configurations, and since all even variables are set equal to $1$,
it is of the form $z_n = x_{2n-5} + y_{2n-5} \sigma \theta$, where $x_k$ and $y_k$ are integers. Specifically, $x_k = f_{k+1}$ counts those double dimer
configurations on the horizontal strip of $k$ boxes that are really single dimer covers (i.e. all edges are doubled), and $y_k$ counts all double dimer configurations
which contain one cycle surrounding an odd number of boxes\footnote{The number $y_k$ is really the weighted sum of all configurations which
contain cycles, but those which have either more than one cycle or a cycle of even length have weight zero. This is because the same two 
odd variables appear repeatedly. See \cite{moz22} for details.}.

As in \cite{moz22}, let $w_n = x_{2n-4} + y_{2n-4} \sigma\theta$ be the corresponding generating function for double dimer covers on
horizontal strips with an even number of boxes. These two sequences satisfy the following Fibonacci-like recurrences.

\begin {lemma} [\cite{moz22}, Lemma 11.6] \label{lem:fib_recurrence}
    The sequences $z_n$ and $w_n$ satisfy the following recurrences:
    \begin {itemize}
        \item[(a)] $\displaystyle z_n = z_{n-1} + (1 + \sigma \theta) w_{n-1}$ 
        \item[(b)] $\displaystyle w_n = w_{n-1} + (1 + \sigma \theta) z_n - \sigma \theta$
    \end {itemize}
\end {lemma}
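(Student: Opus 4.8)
The plan is to read off both recurrences from the $\osp(1|2)$ holonomy matrix of the winding arcs, specialized to the annulus. Unrolling the periodic triangulation of \Cref{fig:annulus} in its universal cover (the right-hand picture), the arc defining $z_n$ is the longest diagonal of a zig-zag triangulation whose associated snake graph has $2n-5$ tiles, so the explicit formula of \Cref{thm:generic} (and the $(1,2)$-entry statement of \Cref{thm:12-entry}) applies verbatim. Since every $\lambda$-length is $1$ and the $\mu$-invariants encountered along the arc alternate between $\sigma$ and $\theta$, the canonical-path holonomy $H_n$ is, up to fixed boundary factors $E(1)^{\pm 1}$, a word in two ``half-step'' matrices $N_\theta := E(1)\,\A{1}{\theta}\,\rho$ and $N_\sigma := E(1)\,\A{1}{\sigma}\,\rho$ (with inverses and the placement of the fermionic reflections $\rho$ dictated by the spin structure and the orientation bookkeeping of \Cref{sec:1st-2-columns}); the monodromy around the core curve is $N_\sigma N_\theta$, whose iterates govern the sequence $z_n$.

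First I would identify the two sequences with matrix entries: by \Cref{thm:12-entry}, $z_n = \pm[H_n]_{1,2}$ where the underlying snake graph has $2n-5$ tiles, and $w_n = \pm[H_n']_{1,2}$ where it has $2n-4$ tiles, so $H_n$ and $H_n'$ differ by exactly one half-step matrix. (This identification is consistent with the double-dimer description recalled in the text: by \Cref{thm:combo} the $(1,2)$-entry of the holonomy is precisely the double-dimer generating function of the horizontal strip, matching \cite[Lemma 11.6]{moz22}.) Writing $V_n$ for the second column of the normalized holonomy, \Cref{thm:generic} with all $\lambda = 1$ tells us exactly what its three entries are: the $(1,2)$- and $(2,2)$-entries are $\pm$ two consecutive winding $\lambda$-lengths, and the $(3,2)$-entry is an odd element which, using the $\osp(1|2)$ relations \Cref{eq:osp1} and \Cref{eq:osp-cm} and the form $1+\star$ of the $(3,3)$-entry, is pinned down to be (a sign times) $\sigma\theta$ times the appropriate previous $\lambda$-length.

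The recurrences then come out of the matrix identity $V_n = N_?\, V_{n-1}$, read one coordinate at a time, where $N_?$ alternates between $N_\theta$ and $N_\sigma$ according to the parity of the number of tiles. Substituting the identifications of the previous paragraph into the top coordinate of this identity, the right-hand side collapses to a combination of $z_{n-1}$ (resp.\ $w_{n-1}$), a copy of $w_{n-1}$ (resp.\ $z_n$), and a $\sigma\theta$-correction coming from the $(3,2)$-entry; matching coefficients yields exactly $z_n = z_{n-1} + (1+\sigma\theta)\,w_{n-1}$ and $w_n = w_{n-1} + (1+\sigma\theta)\,z_n - \sigma\theta$. As a sanity check, eliminating $w$ from the pair reproduces the recurrence $z_n = (3+2\sigma\theta)z_{n-1} - z_{n-2} - \sigma\theta$ quoted earlier, using $(\sigma\theta)^2 = 0$ and $(1+\sigma\theta)^{-1} = 1-\sigma\theta$; the asymmetric $-\sigma\theta$ in (b) is present because $N_\theta$ and $N_\sigma$ are genuinely different (the two triangles lie on opposite sides of the arc), so the constant odd term enters at only one of the two half-steps.

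The step I expect to be the main obstacle is not any serious calculation but the convention bookkeeping: determining precisely which of $N_\theta,N_\sigma$ (or their inverses, and where the $\rho$'s sit) occurs at each half-step once the universal cover is unrolled and the orientation-reversal trick of \Cref{sec:1st-2-columns} is applied, and matching up which matrix entry plays the role of the auxiliary winding $\lambda$-length and which is $w_{n-1}$. After the two transfer matrices are written down explicitly (a routine product of $E(1)$, $\A{1}{\theta}$, and $\rho$), extracting (a) and (b) is a two-line computation, and agreement with \cite[Lemma 11.6]{moz22} provides a further check.
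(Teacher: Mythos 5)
Your proposal is correct in outline, but it is a genuinely different route from the paper's: the paper does not prove this lemma at all — it imports it verbatim from \cite{moz22} (Lemma 11.6), where it is established by direct manipulation of the double dimer generating functions / super Ptolemy relations. You instead propose to read the recurrences off the transfer-matrix identity $H_{n+1} = X H_n$ with $X = E^{-1}A_\theta^{-1}\rho\, E A_\sigma\rho$, split into the two half-steps $E A_\sigma \rho$ and $E^{-1}A_\theta^{-1}\rho$ that interpolate through the "$w$" holonomy. This does work: for instance $E A_\sigma\rho$ has first row $(1,\,1,\,\sigma)$, so the $(1,2)$-entry of the half-step identity reads $w_n = z_n + w_{n-1} + \sigma\big((z_n-1)\theta - w_{n-1}\sigma\big) = w_{n-1} + (1+\sigma\theta)z_n - \sigma\theta$, which is exactly (b), and (a) comes out of the other half-step the same way. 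What your approach buys is a uniform "one tile at a time" mechanism (it is literally the induction of \Cref{sec:1st-2-columns} specialized to the annulus), and it explains structurally why the extra $-\sigma\theta$ appears in only one of the two recurrences. What it costs is that the logical dependencies must be arranged carefully to avoid circularity with \Cref{lem:fibonacci_holonomy}: you must identify the matrix entries with $z$'s and $w$'s via \Cref{thm:generic}/\Cref{thm:combo} together with Theorem 6.2 of \cite{moz22}, not via \Cref{lem:fibonacci_holonomy} itself (whose proof in the paper quotes the very entries you want to use).

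The one soft spot is your claim that the $(3,2)$-entry is "pinned down" by the $\osp(1|2)$ relations to be a sign times $\sigma\theta$ times a previous $\lambda$-length. That entry is odd, of the form $(z_n-1)\theta - w_{n-1}\sigma$, and the recurrence needs the coefficients of $\sigma$ and of $\theta$ in it \emph{separately and exactly} (only one of the two survives multiplication by the relevant odd variable in each half-step). The relation \Cref{eq:osp4} does express this entry in terms of the first two rows, but then you need the explicit odd third-column entries, which in the paper require the sign analysis of \Cref{rmk:local_signs} and the telescoping Fibonacci identities used in the proof of \Cref{lem:fibonacci_holonomy}. A cleaner fix is to run the induction on the whole second column, carrying the odd $(3,2)$-component along as part of the inductive hypothesis rather than trying to determine it a priori. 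With that adjustment the argument closes.
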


We will use the matrix product method to calculate these numbers, utilizing a path as shown on the right side of \Cref{fig:annulus}.
Although \Cref{thm:generic} was only stated for polygons, we can lift the triangulation to the universal cover to obtain a polygon
with a zig-zag triangulation (in the default orientation), in which the appropriate odd elements are identified.

Since all $\lambda$-lengths are $1$,
let us abbreviate $E:= E(1)$ and $A_\theta := A(1|\theta)$. The figure shows the example of $z_4$. In general, the holonomy matrix product for $z_n$ is
\[ H_n = X^{n-2} E^{-1}, \quad \quad \text{where} \quad \quad X := E^{-1} A_\theta^{-1} \rho E A_\sigma \rho \]

\begin {lemma} \label{lem:fibonacci_holonomy}
    The holonomy $H_{n}$, realizing the arc $z_n$, is given by
    \begin {align*} 
        H_{n} &= \ospmatrix
                 {-w_{n-1}}                           {z_n}                           {(z_n-1)\sigma + w_{n-1}\theta}
                 {-z_{n-1}}                           {w_{n-1}}                       {(z_{n-1}-1)\theta + w_{n-1}\sigma}
                 {(z_{n-1}-1)\sigma - w_{n-1}\theta} {(z_n-1)\theta - w_{n-1}\sigma}  {1 - (\ell_{2n-4}-2)\sigma\theta} \\
              &= \ospmatrix
                 {-(x_{2n-6}+y_{2n-6}\sigma\theta)}                           {x_{2n-5}+y_{2n-5}\sigma\theta}     {(x_{2n-5}-1)\sigma + x_{2n-6}\theta}
                 {-(x_{2n-7}+y_{2n-7}\sigma\theta)}   {x_{2n-6}+y_{2n-6}\sigma\theta}     {(x_{2n-7}-1)\theta + x_{2n-6}\sigma}
                 {(x_{2n-7}-1)\sigma - x_{2n-6}\theta} {(x_{2n-5}-1)\theta - x_{2n-6}\sigma} {1 - (\ell_{2n-4}-2)\sigma\theta}
    \end {align*}
\end {lemma}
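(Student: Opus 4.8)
The plan is to prove the first of the two displayed forms by induction on $n$, and then obtain the second by substituting the definitions $z_m = x_{2m-5}+y_{2m-5}\sigma\theta$, $w_m = x_{2m-4}+y_{2m-4}\sigma\theta$. The inductive backbone is the identity $H_{n+1} = X H_n$, which is immediate from $H_n = X^{n-2}E^{-1}$. As a preliminary step I would multiply out the six elementary factors in $X = E^{-1}A_\theta^{-1}\rho E A_\sigma\rho$; using that $\rho$ and $E$ only permute rows and columns up to sign and that $\rho^2 = \mathrm{id}$, one finds
\[
    X = \ospmatrix{2+\sigma\theta}{1}{\sigma+\theta}{1}{1}{\sigma}{\theta-\sigma}{\theta}{1-\sigma\theta},
\]
which one checks lies in $\osp(1|2)$ by verifying \Cref{eq:osp1,eq:osp2,eq:osp3,eq:osp4,eq:osp5,eq:osp6}. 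For the base case $n=2$ we have $H_2 = X^0E^{-1} = E(-1)$, and specializing the claimed matrix at $z_1 = z_2 = 1$ and $w_1 = 0$ (the latter forced by \Cref{lem:fib_recurrence}(a) at $n=2$, since $1+\sigma\theta$ is a unit) reproduces $E(-1)$ exactly, the $(3,3)$-entry being $1$.

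For the inductive step I would assume $H_n$ equals the claimed matrix $M_n$ (written in terms of $z_{n-1}, z_n, w_{n-1}$) and compute the $3\times 3$ product $X M_n$ over the super-commutative ring, reducing each odd-variable product using $\sigma^2 = \theta^2 = 0$, $\theta\sigma = -\sigma\theta$, $(\sigma\theta)^2 = 0$. Every entry of $X M_n$ comes out as a low-degree polynomial in $z_{n-1}, z_n, w_{n-1}$ (and $\sigma,\theta$), and I would then apply \Cref{lem:fib_recurrence}(b) to rewrite $w_{n-1} + (1+\sigma\theta)z_n - \sigma\theta$ as $w_n$ and \Cref{lem:fib_recurrence}(a) to rewrite $z_n + (1+\sigma\theta)w_n$ as $z_{n+1}$, so that $X M_n$ turns into $M_{n+1}$, the same matrix with all indices raised by one. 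The $(3,3)$-entry need never be tracked through the product: since $X M_n \in \osp(1|2)$, \Cref{eq:osp1} forces it to equal $1 + \alpha\beta$ with $\alpha,\beta$ the (already-matched) $(3,1)$- and $(3,2)$-entries, and a one-line computation gives $\alpha\beta = \big[(z_n-1)(z_{n+1}-1) - w_n^2\big]\sigma\theta$; this is the asserted $(3,3)$-entry $1 - (\ell_{2n-2}-2)\sigma\theta$ once one applies a Cassini-type Fibonacci identity to evaluate the (integer) bracket as $2-\ell_{2n-2}$.

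The second displayed form then follows by substituting the above expressions for $z_m$ and $w_m$ into the first and using nilpotence of the odd variables: terms such as $y\,\sigma\theta\cdot\sigma$, $y\,\sigma\theta\cdot\theta$, and $(y\,\sigma\theta)^2$ vanish, so for instance $w_{n-1}^2 = x_{2n-6}^2$, $(z_n-1)\sigma = (x_{2n-5}-1)\sigma$, and $(z_{n-1}-1)\theta = (x_{2n-7}-1)\theta$, which is exactly the entrywise rewriting claimed. The main obstacle is the super-algebra bookkeeping in the inductive step: one is multiplying matrices over a super-commutative ring in which the two odd generators recur throughout, so every product must be put into a fixed order and reduced modulo $\sigma^2 = \theta^2 = 0$, and the two Fibonacci-type recurrences of \Cref{lem:fib_recurrence} must be fed in in precisely the right combination for the index-shifted matrix to emerge. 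This is routine but sign-sensitive, and it is essentially the only place where a miscalculation could slip in.
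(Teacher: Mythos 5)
Your proof is correct, but it takes a genuinely different route from the paper's. The paper derives the lemma as a corollary of its double dimer machinery: it applies \Cref{thm:combo} to the snake graph $G_{2n-3}$ of the lifted zig-zag triangulation, identifies each of the nine entries with a restricted double-dimer generating function, and then evaluates those sums via bijections with smaller horizontal strips together with Fibonacci, Lucas, and Cassini identities (only the $(3,3)$-entry is handled there the way you handle it, via \Cref{eq:osp1} plus Cassini). Your argument is instead a direct matrix induction on $H_{n+1}=XH_n$: you compute $X$ once (your displayed $X$ is correct --- I verified the product of the six elementary factors), check the base case $H_2=E(-1)$, and push the induction through using the two recurrences of \Cref{lem:fib_recurrence} combined, substituting (a) into (b) so that, e.g., the computed $(1,1)$-entry $-2w_{n-1}-z_{n-1}-(2w_{n-1}+z_{n-1}-1)\sigma\theta$ is recognized as $-w_n$; I spot-checked several entries, including the odd ones, and the super-algebra bookkeeping works out. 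What each approach buys: yours is shorter, self-contained modulo \Cref{lem:fib_recurrence}, and avoids \Cref{thm:combo} entirely; the paper's is longer but exhibits the combinatorial meaning of every entry (e.g.\ the $(1,3)$-entry as a sum over double dimer covers with a cycle through the first tile), which is the point of \Cref{sec:DD} and what feeds \Cref{rem:Conj11.19}. One small point to patch: the paper defines the Lucas numbers only for $k\ge 1$, so in your base case $n=2$ you should either note the standard backward extension $\ell_0=2$ (so that $\ell_{2n-4}-2=0$) or start the induction at $n=3$, where $H_3=XE^{-1}$ matches the claimed matrix with $z_3=2+\sigma\theta$ and $w_2=1$.
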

Here $\ell_k$ denotes the $k$th Lucas number defined by $\ell_1 =1$, $\ell_2 = 3$, and $\ell_k = \ell_{k-1} + \ell_{k-2}$ for $k\geq 3$.
\footnote{This quantity $\ell_{2n-4}-2$ also equals $\kappa(W_{n-2})$, the number of spanning trees on the wheel graph with $(n-1)$ vertices (see \cite{myers1971}).} 

\begin {proof}
    Consider the polygon in the universal cover of the cylinder surrounding the canonical path pictured in \Cref{fig:annulus}.
    Note that this polygon, which has $2n-2$ vertices, has a zig-zag triangulation with the default orientation. 
    Furthermore, the holonomy has type $01$ (in the sense of \Cref{def:H_ab}). This ensures that the signs match those in \Cref{thm:combo}.

    Label the vertices $0,1,\dots,2n-1$, as in \Cref{sec:zig-zag}
    (note that since this picture is on the universal cover, $i = i+2$).
    The snake graph $G_T$ assicated to this zig-zag triangulation is $G_{2n-5}$, a horizontal row of $2n-5$ boxes. Therefore the snake graph $G_{\widetilde{T}}$
    described in \Cref{sec:DD} is $G_{2n-3}$. Since, as we mentioned above, this holonomy is of type $01$, and since $G_{2n-3}$
    has an odd number of boxes, we will always have $e_0$ (resp. $e_1$) on the bottom (resp. left) side of the first tile, 
    and $e_N$ (resp. $e_{N+1}$) on the right (resp. top) of the last tile.
    \Cref{thm:combo} gives formulas for all nine entries of $H_n$ in terms of subsets of
    double dimer covers of $G_{\widetilde{T}} = G_{2n-3}$. Note that since we set all $\lambda$-lengths equal to 1, we can ignore the denominators in these expressions.

    The following picture is the specialization of \Cref{fig:dd-entries} to the situation $\widetilde{G} = G_{2n-3}$. 

    \begin {center}
    \begin {tikzpicture}[scale=0.5, every node/.style={scale=0.5}]
        \foreach \x/\y in {0/0, 8/0, 16/0, 0/-2, 8/-2, 16/-2, 0/-4, 8/-4, 16/-4} {
            \foreach \s in {0,3} {
                \draw (\x+\s,\y) -- (\x+\s+2,\y) -- (\x+\s+2,\y+1) -- (\x+\s,\y+1) -- cycle;
                \draw (\x+\s+1,\y) -- (\x+\s+1,\y+1);
            }
            \draw (\x+2.5,\y+0.5) node {$\cdots$};
        }

        \draw[blue, line width=2] (0,0) -- (1,0);
        \draw[blue, line width=2] (0,-0.2) -- (1,-0.2);
        \draw[blue, line width=2] (0,1) -- (1,1);
        \draw[blue, line width=2] (0,1.2) -- (1,1.2);
        \draw[blue, line width=2] (5,0) -- (5,1);
        \draw[blue, line width=2] (5.2,0) -- (5.2,1);

        \begin {scope}[shift={(8,0)}]
            \draw[blue, line width=2] (0,0) -- (0,1);
            \draw[blue, line width=2] (-0.2,0) -- (-0.2,1);
            \draw[blue, line width=2] (5,0) -- (5,1);
            \draw[blue, line width=2] (5.2,0) -- (5.2,1);
        \end {scope}

        \begin {scope}[shift={(16,0)}]
            \draw[blue, line width=2] (5,0) -- (5,1);
            \draw[blue, line width=2] (5.2,0) -- (5.2,1);
            \draw[blue, line width=2] (1,1) -- (0,1) -- (0,0) -- (1,0);
        \end {scope}

        \begin {scope}[shift={(0,-2)}]
            \draw[blue, line width=2] (0,0) -- (1,0);
            \draw[blue, line width=2] (0,-0.2) -- (1,-0.2);
            \draw[blue, line width=2] (0,1) -- (1,1);
            \draw[blue, line width=2] (0,1.2) -- (1,1.2);
            \draw[blue, line width=2] (4,0) -- (5,0);
            \draw[blue, line width=2] (4,-0.2) -- (5,-0.2);
            \draw[blue, line width=2] (4,1) -- (5,1);
            \draw[blue, line width=2] (4,1.2) -- (5,1.2);
        \end {scope}

        \begin {scope}[shift={(8,-2)}]
            \draw[blue, line width=2] (0,0) -- (0,1);
            \draw[blue, line width=2] (-0.2,0) -- (-0.2,1);
            \draw[blue, line width=2] (4,0) -- (5,0);
            \draw[blue, line width=2] (4,-0.2) -- (5,-0.2);
            \draw[blue, line width=2] (4,1) -- (5,1);
            \draw[blue, line width=2] (4,1.2) -- (5,1.2);
        \end {scope}

        \begin {scope}[shift={(16,-2)}]
            \draw[blue, line width=2] (4,0) -- (5,0);
            \draw[blue, line width=2] (4,-0.2) -- (5,-0.2);
            \draw[blue, line width=2] (4,1) -- (5,1);
            \draw[blue, line width=2] (4,1.2) -- (5,1.2);
            \draw[blue, line width=2] (1,1) -- (0,1) -- (0,0) -- (1,0);
        \end {scope}

        \begin {scope}[shift={(0,-4)}]
            \draw[blue, line width=2] (0,0) -- (1,0);
            \draw[blue, line width=2] (0,-0.2) -- (1,-0.2);
            \draw[blue, line width=2] (0,1) -- (1,1);
            \draw[blue, line width=2] (0,1.2) -- (1,1.2);
            \draw[blue, line width=2] (4,1) -- (5,1) -- (5,0) -- (4,0);
        \end {scope}

        \begin {scope}[shift={(8,-4)}]
            \draw[blue, line width=2] (0,0) -- (0,1);
            \draw[blue, line width=2] (-0.2,0) -- (-0.2,1);
            \draw[blue, line width=2] (4,1) -- (5,1) -- (5,0) -- (4,0);
        \end {scope}

        \begin {scope}[shift={(16,-4)}]
            \draw[blue, line width=2] (1,1) -- (0,1) -- (0,0) -- (1,0);
            \draw[blue, line width=2] (4,1) -- (5,1) -- (5,0) -- (4,0);
        \end {scope}
    \end {tikzpicture}
    \end {center}

    The $(1,2)$-entry (resp. the $(2,1)$-entry) has an obvious bijection with double dimer covers of $G_{2n-5}$ (resp. $G_{2n-7}$), and hence
    by a result from \cite{moz22} is equal to $z_n$ (resp. $z_{n-1}$). Similarly, the $(1,1)$ and $(2,2)$ entries have obvious bijections with
    double dimer covers of $G_{2n-6}$, and so are equal to $w_{n-1}$.

    The $(1,3)$-entry has a bijection with double dimer covers of $G_{2n-4}$ containing a cycle which surrounds (at least) the first square. The
    first $\mu$-invariant in $\widetilde{T}$ is $\theta$, so cycles which surround an odd number of squares will end with a $\sigma$, and those
    surrounding an even number of squares will end with $\theta$. According to \Cref{thm:combo}, we additionally need to 
    toggle the first $\theta$, meaning the weights will contain only the $\mu$-invariant at the \emph{end} of the cycle. If the first cycle
    surrounds $k$ tiles, the rest can be any double dimer configuration on the complement of the first $k+1$ tiles (i.e. on $G_{2n-5-k}$).
    When $k$ is odd, this is counted by $w_{n-\frac{k+1}{2}}$, and when $n$ is even by $z_{n - \frac{k}{2}}$.
    Thus, we have that the $(1,3)$-entry is given by
    \[ (w_{n-1} + w_{n-2} + \cdots + w_2) \sigma + (z_{n-1} + z_{n-2} + \cdots + z_2) \theta. \]
    Note that each $w_i$ or $z_i$ is of the form $x + y \sigma \theta$, so multiplying by either $\sigma$ or $\theta$ annihilates the $y$-term,
    and so we may replace each $z_i$ with $x_{2i-5}$ and each $w_i$ with $x_{2i-4}$, which are simply Fibonacci numbers.
    The following are well-known identities of Fibonacci numbers:
    \[ x_0 + x_2 + \cdots + x_{2n-6} = x_{2n-5} - 1 \quad \text{and} \quad x_{-1} + x_1 + x_3 + \cdots + x_{2n-7} = x_{2n-6} \]
    The arguments for the $(2,3)$, $(3,1)$, and $(3,2)$-entries are similar. The signs in the bottom row come from \Cref{rmk:local_signs}.
    In particular, this corresponds to a zig-zag triangulation where half of the odd variables are all set equal to $\sigma$ and the other half
    all set equal to $\theta$. Since we negate half of the odd variables in \Cref{rmk:local_signs}, this means simply negating either $\sigma$ or $\theta$.

    For the (3,3)-entry, we use standard identities on Fibonacci and Lucas numbers:  In particular, by \Cref{eq:osp1}, 
    we can express the (3,3)-entry as $1 + \alpha\beta$, where $\alpha$ is the (3,1)-entry and $\beta$ is the (3,2)-entry.  
    We thus obtain 
    \begin {align*} 
        H_n(3,3) &= 1 + \bigg((x_{2n-7}-1)\sigma - x_{2n-6}\theta \bigg)  \bigg((x_{2n-5} - 1)\theta - x_{2n-6}\sigma\bigg) \\
                 &= 1 - \bigg( x_{2n-6}^2 - x_{2n-5}x_{2n-7} + x_{2n-5} + x_{2n-7} - 1 \bigg)\sigma\theta 
    \end {align*}
    By Cassini's identity on Fibonacci numbers, we have the equality $x_{2n-5} x_{2n-7} - x_{2n-6}^2 = 1$ and another standard identity 
    relating Fibonacci and Lucas numbers is $\ell_{k} = x_{k-3} + x_{k-1}$. 
    Using this in the case of $k=2n-4$, we rewrite $H_n(3,3) = 1 + (\ell_{2n-4}-2)\theta\sigma = 1 + (\kappa(W_{n-3}) - 2)\theta\sigma$ as desired.
\end {proof}

\begin {remark} \label{rem:Conj11.19}
    In \cite{moz22}, we showed that $z_n$, defined as the generating function of double dimer covers of $G_{2n-5}$, is equal to the $\lambda$-length
    of certain arcs in an annulus, as described above. In the previous paper, we defined the $w_n$'s as double dimer generating functions for $G_{2n-4}$, 
    and used them for algebraic calculations, but did not give a geometric interpretation. In Remark 11.17 and Conjecture 11.19 in \cite{moz22}, we suggested
    that the $w_n$'s should have a particular geometric meaning, which we are now able to verify.
    Comparing $H_n$ from \Cref{lem:fibonacci_holonomy} with $H_{ab}$ from \Cref{thm:generic},
    we see that the $w_n$'s can also be interpreted as certain $\lambda$-lengths. In particular, $w_n$ is the $\lambda$-length of an arc which
    winds around the annulus $n-2$ times, and has both endpoints on the same boundary component.
\end {remark}

\section {Geometric Interpretation} \label{sec:appendix}

In this section, we will describe a more geometric interpretation of the results of this paper, in terms of 
the definitions of decorated super Teichm\"{u}ller spaces given in \cite{pz_19}. We take a viewpoint along the
lines of \cite{fg_06} (section 11), which is slightly different than the viewpoint of \cite{pz_19}. Some version of this viewpoint
was also present in \cite{bouschbacher_13}, applied to super shear coordinates (rather than $\lambda$-lengths).

This alternative viewpoint is as follows. Let $\mathcal{A}$ be the commutative super-algebra generated by
$\lambda$-lengths and $\mu$-invariants coming from some particular triangulation. The definitions of Penner and
Zeitlin in \cite{pz_19} are in terms of the Minkowski geometry of the free module $\mathcal{A}^{3|2}$, whereas our
alternate viewpoint will be to instead view the elements of $\osp(1|2)$ acting on the free module $\mathcal{A}^{2|1}$
(by ordinary matrix-vector multiplication).

To connect the two viewpoints, we will give a map $s \colon \mathcal{A}^{2|1} \to \mathcal{A}^{3|2}$,
which is equivariant with respect to the actions. This way, statements about the action may be stated in $\mathcal{A}^{2|1}$, where the action is simpler,
and the statements will transfer over to $\mathcal{A}^{3|2}$.

In particular, our main goal in this section is to use this viewpoint to give geometric interpretations for the matrices $E(\lambda)$ and $A(h|\theta)$,
and thus also the holonomy matrices $H_{a,b}$ of \Cref{def:H_ab} and \Cref{thm:generic}.

The map $s \colon \mathcal{A}^{2|1} \to \mathcal{A}^{3|2}$, mentioned above, is given explicitly as follows:
\[ s(x,y|\phi) := \left( y^2-x^2, \, -2xy, \, y^2+x^2 \, \middle| \, -2y\phi, \, 2x\phi \right) \frac{1}{\sqrt{2}}. \]

\begin {prop} \label{prop:s_is_equivariant}
    The map $s \colon \mathcal{A}^{2|1} \to \mathcal{A}^{3|2}$ is equivariant with respect to the $\osp(1|2)$-actions.
\end {prop}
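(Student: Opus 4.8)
The plan is to recognize $\mathcal{A}^{3|2}$ — with its Minkowski form and the $\osp(1|2)$-action used by Penner--Zeitlin in \cite{pz_19} — as a copy of the (super) symmetric square $\mathrm{Sym}^2(\mathcal{A}^{2|1})$ of the standard module, and to observe that under this identification $s$ is, up to the constant $\tfrac{1}{\sqrt 2}$ and an $\osp$-equivariant linear change of coordinates on the even block, simply the squaring map $v \mapsto v\cdot v$. Writing $u_1 = (1,0\mid 0)$, $u_2 = (0,1\mid 0)$, $\psi = (0,0\mid 1)$ for the standard basis, $\mathrm{Sym}^2(\mathcal{A}^{2|1})$ has even part spanned by $u_1^2, u_1u_2, u_2^2$ and odd part spanned by $u_1\psi, u_2\psi$ (with $\psi^2 = 0$ automatically, matching the absence of $\phi^2$ in the formula for $s$), hence is free of rank $3|2$; and for $v = xu_1 + yu_2 + \phi\psi$ one has $v\cdot v = x^2u_1^2 + 2xy\,u_1u_2 + y^2u_2^2 \pm 2x\phi\,u_1\psi \pm 2y\phi\,u_2\psi$, whose components become exactly those of $s(x,y\mid\phi)$ after the linear change of coordinates on the even part that puts the Minkowski form in standard shape. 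Granting this, equivariance is formal: $s(gv) = (gv)\cdot(gv) = \mathrm{Sym}^2(g)(v\cdot v) = g\cdot s(v)$. To make it precise one recalls the explicit action and form on $\mathcal{A}^{3|2}$ from \cite{pz_19}, writes down the intertwining isomorphism $\mathrm{Sym}^2(\mathcal{A}^{2|1}) \xrightarrow{\sim} \mathcal{A}^{3|2}$, and checks that $s$ equals this isomorphism composed with $\tfrac{1}{\sqrt 2}(v\mapsto v\cdot v)$ by comparing components.

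Alternatively — the route I would actually carry out if transcribing the precise conventions of \cite{pz_19} proves awkward — one verifies $s(g\cdot v) = g\cdot s(v)$ directly on a generating set of $\osp(1|2)$. Since $\osp(1|2)$ is generated by the matrices $\rho$, $E(\lambda)$, and $A(h\mid\theta)$ of \Cref{def:matrices} (the even $\mathrm{SL}_2$ being generated by the Weyl/torus element $E(\lambda)$ together with the lower unipotent contained in $A(h\mid\theta)$, and the odd directions coming from the $\theta$-part of $A(h\mid\theta)$), and since both sides of the claimed identity are polynomial in $v$ and algebraic in the entries of $g$, it suffices to treat these three cases — and in any event the holonomies $H_{a,b}$ of interest in this paper are themselves products of such matrices. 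The $\rho$ case is immediate ($\rho$ negates the even coordinates of $v$, which leaves the degree-two even components of $s$ unchanged and negates both odd components, exactly matching the action of $\rho$ on $\mathcal{A}^{3|2}$); the $E(\lambda)$ case is a direct substitution; and the $A(h\mid\theta)$ case, where even and odd coordinates mix, is the main computation — here one expands $s$ of the transformed vector and uses $\theta^2 = \phi^2 = 0$ repeatedly, so that, e.g., $(\sqrt h\,\theta\phi)^2 = 0$ and all would-be quartic terms vanish, leaving precisely the $\mathrm{Sym}^2$-type image of $s(v)$.

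The main obstacle is not any single computation but the bookkeeping of conventions: pinning down exactly which Minkowski form and which $\osp(1|2)$-action on $\mathcal{A}^{3|2}$ from \cite{pz_19} is intended, and confirming that the constant $\tfrac{1}{\sqrt 2}$ together with the signs and orderings of the odd entries in the definition of $s$ are precisely those that make the intertwining relation hold on the nose, rather than up to a scalar or a $\rho$-twist. (As a consistency check one can first note that $s(v)$ is always a null vector for the Minkowski form, as befits a decoration/horocycle, which already constrains the normalization.) Once the conventions are fixed, the verification — conceptually via $\mathrm{Sym}^2$, or concretely via the three generator cases — is routine.
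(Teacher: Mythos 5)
Your proposal is correct and is essentially the paper's approach: the paper recalls from \cite{pz_19} the identification of $\mathcal{A}^{3|2}$ with matrices $Q(v)$ on which $g$ acts by $Q(v)\mapsto (g^{-1})^{\mathrm{st}}Q(v)g^{-1}$ — i.e.\ exactly the quadratic/$\mathrm{Sym}^2$-type action you identify — and then reduces equivariance to the identity $Q(s(g\cdot v)) = (g^{-1})^{\mathrm{st}}Q(s(v))g^{-1}$, omitting the (as you say, routine once conventions are fixed) verification. Your alternative generator-by-generator check against $\rho$, $E(\lambda)$, $A(h\mid\theta)$ is a sound fallback and suffices for every matrix actually used in the paper, though note it proves equivariance only for the subgroup these elements generate over $\mathcal{A}$.
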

\begin {proof}
    In \cite[section 1]{pz_19}, each element $v \in \mathcal{A}^{3|2}$ was identified with a $2|1$-by-$2|1$ matrix $Q(v)$, and the action
    of $\osp(1|2)$ on $\mathcal{A}^{3|2}$ was defined by
    \[ g \cdot Q(v) = (g^{-1})^{\mathrm{st}} Q(v) g^{-1}. \]
    Therefore one just needs to verify that 
    \[ Q(s(g \cdot v)) = (g^{-1})^{\mathrm{st}} \, Q(s(v)) \, g^{-1}.\]
    We omit this calculation, as it is straightforward.
\end {proof}

\begin {remark} \label{rmk:plus-or-minus}
    Note that all entries of $s(x,y|\phi)$ are homogeneous quadratic expressions in $x$, $y$, and $\phi$. It follows that $s(v) = s(-v)$ for all $v \in \mathcal{A}^{2|1}$.
    We may therefore think of the domain of the map $s$ as the quotient $\mathcal{A}^{2|1} / \pm 1$, where vectors are identified with their negatives.
\end {remark}

In \cite{pz_19}, the $\lambda$-lengths were defined in terms of a certain Minkowski inner product on $\mathcal{A}^{3|2}$.
Specifically, they define $\lambda(a,b) := \sqrt{\left<a,b\right>}$.
Under this map $s$, the $\lambda$-lengths correspond to a certain bilinear form, which we now describe.

\begin {definition}
    Define the map $\omega \colon \mathcal{A}^{2|1} \times \mathcal{A}^{2|1} \to \mathcal{A}$ as follows. If $v = (a,b|\phi)$ and $w=(x,y|\theta)$, then
    \[ \omega(v,w) := ay - bx + \phi \theta.\]
\end {definition}

\begin {remark}
    The map $\omega$ is the bilinear form defined by the matrix $J$ (from \Cref{sec:osp}), so by definition, $\osp(1|2)$ is the group which preserves $\omega$.
\end {remark}

\begin {prop} \label{prop:omega_is_lambda}
    For any $v,w \in \mathcal{A}^{2|1}$, we have $\left< s(v), s(w) \right> = \omega(v,w)^2$. 
    In particular, 
    \[ \lambda(s(v), s(w)) = \sqrt{\omega(v,w)^2} = \pm \, \omega(v,w). \]
\end {prop}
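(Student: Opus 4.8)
The plan is to reduce the identity $\langle s(v), s(w)\rangle = \omega(v,w)^2$ to a short computation organized by degree in the odd variables. First I would recall the explicit form of the Minkowski pairing on $\mathcal{A}^{3|2}$ used in \cite{pz_19}: on the bosonic coordinates it is (with the sign convention there) $\langle (p_1,p_2,p_3),(q_1,q_2,q_3)\rangle = -p_1q_1 - p_2q_2 + p_3q_3$, and the fermionic coordinates are paired by the symplectic form $\xi_1\eta_2 - \xi_2\eta_1$, so that the total pairing is the one whose Gram matrix is the $3|2$-analogue of $J$ (which is exactly why $\osp(1|2)$ preserves it). Writing $v = (a,b\mid\phi)$ and $w = (x,y\mid\theta)$, the map $s$ gives $s(v) = \tfrac{1}{\sqrt2}(b^2-a^2,\,-2ab,\,b^2+a^2 \mid -2b\phi,\,2a\phi)$ and likewise for $s(w)$.

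Next I would compute the two contributions separately. The bosonic part of $\langle s(v), s(w)\rangle$ is $\tfrac12\big[-(b^2-a^2)(y^2-x^2) - (2ab)(2xy) + (b^2+a^2)(y^2+x^2)\big]$, which simplifies to $(ay-bx)^2$; as the special case $v=w$ this also recovers the fact that $s$ maps into the light cone, consistent with \Cref{prop:s_is_equivariant}. The fermionic cross-term is $\tfrac12\big[(-2b\phi)(2x\theta) - (2a\phi)(-2y\theta)\big] = 2(ay-bx)\,\phi\theta$. Since $ay-bx$ is even and $(\phi\theta)^2 = 0$, expanding $\omega(v,w)^2 = (ay - bx + \phi\theta)^2$ gives precisely $(ay-bx)^2 + 2(ay-bx)\phi\theta$, so adding the two pieces yields $\langle s(v), s(w)\rangle = \omega(v,w)^2$, as claimed.

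For the ``in particular'' clause I would use $\lambda(a,b) = \sqrt{\langle a,b\rangle}$ together with the definition of square roots of even elements in $\mathcal{A}$: for $ay-bx$ invertible in the coefficient field one may factor $\omega(v,w) = (ay-bx)\big(1 + \phi\theta/(ay-bx)\big)$, so $\omega(v,w)^2 = (ay-bx)^2\big(1 + \phi\theta/(ay-bx)\big)^2$ and hence $\sqrt{\omega(v,w)^2} = \pm(ay-bx)\big(1 + \phi\theta/(ay-bx)\big) = \pm\,\omega(v,w)$, the ambiguity being the usual choice of sign of the square root of the body. The main obstacle here is bookkeeping rather than depth: one must pin down the exact sign and ordering conventions of the Penner--Zeitlin pairing (and of products of odd variables) so that the fermionic term really comes out as $+2(ay-bx)\phi\theta$ and the bosonic term as $+(ay-bx)^2$ rather than with a spurious minus sign, and one should note that the $\pm$ in $\lambda(s(v),s(w)) = \pm\,\omega(v,w)$ is genuinely not canonical, in accordance with \Cref{rmk:plus-or-minus} (under $v \mapsto -v$ one has $s(v) = s(-v)$ while $\omega(v,w) \mapsto -\omega(v,w)$).
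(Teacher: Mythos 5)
Your computation is correct and is exactly the direct verification the paper leaves implicit (\Cref{prop:omega_is_lambda} is stated without proof): expanding the bosonic part gives $(ay-bx)^2$, the fermionic cross-term gives $2(ay-bx)\phi\theta$, and these match $\omega(v,w)^2$ since $(\phi\theta)^2=0$; your sanity check against $s(e_1), s(e_2)$ and your handling of $\sqrt{\omega^2}$ via the nilpotent correction to the body are both sound. The only caveat is the one you already flag — the signs depend on matching Penner--Zeitlin's conventions for the $3|2$ pairing — and your choice is the one consistent with the paper's use of $\lambda(a,b)=\sqrt{\langle a,b\rangle}$ and with $u=s(e_1)$, $w=s(e_2)$ pairing to $1$.
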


\begin {lemma} \label{lem:first_two_columns}
    Let $v_1,v_2 \in \mathcal{A}^{2|1}$ with $\omega(v_1,v_2) = 1$. Then there is a unique matrix $g \in \osp(1|2)$
    whose first two columns are $v_1$ and $v_2$.
\end {lemma}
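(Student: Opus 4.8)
The plan is to write a putative such matrix as $g = \ospmatrix{a}{b}{\gamma}{c}{d}{\delta}{\alpha}{\beta}{e}$, with $v_1 = (a,c\,|\,\alpha)$ and $v_2 = (b,d\,|\,\beta)$ prescribed, so that only the third column $(\gamma,\delta\,|\,e)$ needs to be determined, and then to show that the defining equations of $\osp(1|2)$ pin it down uniquely and are self-consistent precisely because $\omega(v_1,v_2) = 1$. For uniqueness, \Cref{eq:osp1} forces $e = 1 + \alpha\beta$, \Cref{eq:osp5} forces $\gamma = a\beta - b\alpha$, and \Cref{eq:osp6} forces $\delta = c\beta - d\alpha$; hence there is at most one candidate, and it remains only to check that this candidate actually lies in $\osp(1|2)$.

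For existence, I would define $e,\gamma,\delta$ by the three formulas above, let $g$ be the resulting $2|1\times 2|1$ matrix, and verify \Cref{eq:osp1,eq:osp2,eq:osp3,eq:osp4,eq:osp5,eq:osp6}, which by the discussion in \Cref{sec:osp} is exactly the condition for membership in $\osp(1|2)$. Equations \Cref{eq:osp1,eq:osp5,eq:osp6} hold by construction. For \Cref{eq:osp2} one uses that $\omega(v_1,v_2) = ad - cb + \alpha\beta = 1$ gives $ad-bc = 1-\alpha\beta$, while $e = 1+\alpha\beta$ has inverse $1-\alpha\beta$ since $(\alpha\beta)^2 = 0$, so $e^{-1} = ad - bc$. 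For \Cref{eq:osp3}, expanding and using that $a,b,c,d$ are even yields $c\gamma - a\delta = (ad-cb)\alpha = (1-\alpha\beta)\alpha = \alpha$, the last step because $\alpha^2 = 0$ (so $\alpha\beta\alpha = 0$); \Cref{eq:osp4} is entirely analogous, giving $d\gamma - b\delta = (ad-bc)\beta = \beta$. Once all six equations are checked, $g \in \osp(1|2)$, and together with the uniqueness paragraph this proves the lemma.

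The only substantive point — and the reason the hypothesis $\omega(v_1,v_2) = 1$ is exactly the right normalization — is the verification of \Cref{eq:osp2,eq:osp3,eq:osp4}: after expanding and using the evenness of $a,b,c,d$, each of these collapses to an identity of the shape ``(scalar)$\,\cdot\,$(odd variable) $=$ (odd variable)'', where the scalar is forced to be $1 - \alpha\beta$ — and hence acts as the identity on odd elements — precisely because $\omega(v_1,v_2) = 1$. Everything else is bookkeeping. I would also note in passing that \Cref{eq:osp-cm}, $\gamma\delta = \alpha\beta$, holds automatically for this $g$ (a one-line computation gives $\gamma\delta = (ad-bc)\alpha\beta = \alpha\beta$), consistent with its being a formal consequence of \Cref{eq:osp3,eq:osp4,eq:osp5,eq:osp6}. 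I do not anticipate any real obstacle: the argument is a finite, elementary computation in the super-algebra $\mathcal{A}$, the only care needed being the systematic use of nilpotence of the odd variables and the single place where $\omega(v_1,v_2)=1$ enters.
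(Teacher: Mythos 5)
Your proposal is correct and follows essentially the same route as the paper: both determine the third column uniquely from \Cref{eq:osp1,eq:osp5,eq:osp6} and then verify \Cref{eq:osp2,eq:osp3,eq:osp4}, with $\omega(v_1,v_2)=1$ entering exactly at \Cref{eq:osp2} and the nilpotence of the odd variables handling the rest. The added observation about \Cref{eq:osp-cm} is a harmless bonus not present in the paper's proof.
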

\begin {proof}
    Let $v_1 = (a,b|\phi)$ and $v_2 = (x,y|\theta)$.
    We want to show there is some $v_3 = (\alpha,\beta,f)$ such that
    \[ \left( \begin{array}{cc|c} a & x & \alpha \\ b & y & \beta \\ \hline \phi & \theta & f \end{array} \right) \in \osp(1|2). \]
    If such a $v_3$ exists, it is unique by the relations defining $\osp(1|2)$. In particular, equations (\ref{eq:osp1}), (\ref{eq:osp5}), and (\ref{eq:osp6}) 
    say that $\alpha$, $\beta$, and $f$ are determined by $v_1$ and $v_2$ as follows:
    \[ f = 1 + \phi \theta, \quad \alpha = a \theta - x \phi, \quad \beta = b \theta - y \phi \]
    However, the entries of the matrix must also satisfy equations (\ref{eq:osp2}), (\ref{eq:osp3}), and (\ref{eq:osp4}),
    so we must check that defining $v_3$ by the formulas above is consistent with these other three equations.

    First is equation (\ref{eq:osp2}), which says that $ay-bx = 1-\phi\theta$, or equivalently $ay-bx + \phi\theta = 1$,
    which is precisely our assumption that $\omega(v_1,v_2) = 1$. So we see this assumption is necessary in order to satisfy equation (\ref{eq:osp2}).

    Next, equation (\ref{eq:osp3}) requires that $\phi = b \alpha - a \beta$. The right-hand side is 
    \[ b \alpha - a \beta = b(a \theta - x \phi) - a(b \theta - y \phi) = (ay-bx) \phi. \]
    But we already know from the discussion above that $ay-bx = 1 - \phi \theta$. Substituting this gives the desired result.

    The calculation verifying equation (\ref{eq:osp4}), i.e. $\theta = y \alpha - x \beta$, is similar.
\end {proof}

\begin {definition}
    Let $\mathcal{A}^{2|1}_+$ be the set of vectors with non-zero bodies. That is
    \[ \mathcal{A}^{2|1}_+ := \{ (x,y|\phi) \in \mathcal{A}^{2|1} ~|~ x \neq 0 \text{ or } y \neq 0 \} \]
\end {definition}

\begin {corollary} \label{cor:transitive}
    The action of $\osp(1|2)$ on $\mathcal{A}^{2|1}_+$ is transitive.
\end {corollary}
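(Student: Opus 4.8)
The plan is to deduce transitivity from \Cref{lem:first_two_columns} by showing that every $v \in \mathcal{A}^{2|1}_+$ lies in the orbit of the standard basis vector $e_1 = (1,0\,|\,0)$; since the $\osp(1|2)$-action is by an (invertible) group, transitivity on all of $\mathcal{A}^{2|1}_+$ then follows at once: if $g_v\cdot e_1 = v$ and $g_w\cdot e_1 = w$, then $g_w g_v^{-1}$ carries $v$ to $w$.

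First I would check that the action of $\osp(1|2)$ actually restricts to $\mathcal{A}^{2|1}_+$, i.e.\ preserves the property of having nonzero body. Reducing $g \in \osp(1|2)$ modulo the ideal generated by the odd variables sends its even $2\times 2$ block to a matrix of determinant $1$ (by \Cref{eq:osp1,eq:osp2}, the body of $ad-bc$ is $1$), hence to an invertible linear map over the body field; an invertible map cannot send a vector of nonzero body to one of zero body, so $g$ maps $\mathcal{A}^{2|1}_+$ into itself. This is what makes the construction below land inside $\mathcal{A}^{2|1}_+$.

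Next, given $v = (x,y\,|\,\phi) \in \mathcal{A}^{2|1}_+$, I would produce a companion vector $v' \in \mathcal{A}^{2|1}$ with $\omega(v,v') = 1$, so that \Cref{lem:first_two_columns} furnishes a (unique) $g \in \osp(1|2)$ whose first two columns are $v$ and $v'$; in particular the first column is $v$, so $g\cdot e_1 = v$. Since $v$ has nonzero body, at least one of $x, y$ is invertible in $\mathcal{A}$. If $x$ is invertible, set $v' = (0,\, x^{-1}\,|\,0)$, so that $\omega(v,v') = x\cdot x^{-1} - y\cdot 0 + \phi\cdot 0 = 1$. If instead $y$ is invertible, set $v' = (-\,y^{-1},\, 0\,|\,0)$, so that $\omega(v,v') = x\cdot 0 - y\cdot(-\,y^{-1}) + \phi\cdot 0 = 1$. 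In either case the hypothesis of \Cref{lem:first_two_columns} is met, which completes the argument.

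I do not expect a serious obstacle: the corollary is an essentially immediate consequence of \Cref{lem:first_two_columns}. The one point requiring a little care is the bookkeeping with bodies in the super-algebra $\mathcal{A}$ — namely that ``nonzero body'' is precisely the condition guaranteeing that one of the two even coordinates is invertible (so that the companion $v'$ can be chosen explicitly), and that this condition is preserved by the $\osp(1|2)$-action.
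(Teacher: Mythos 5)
Your proposal is correct and follows essentially the same route as the paper: exhibit, for each $v=(x,y|\phi)\in\mathcal{A}^{2|1}_+$, a companion vector with $\omega(v,\cdot)=1$ (using invertibility of whichever of $x,y$ has nonzero body) and invoke \Cref{lem:first_two_columns} to get $g\cdot e_1=v$. Your extra checks — that the action preserves $\mathcal{A}^{2|1}_+$ and that hitting every point from $e_1$ implies transitivity — are points the paper leaves implicit, and your choice of companion with zero odd part works just as well as the paper's choice $(0,x^{-1}|\phi)$.
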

\begin {proof}
    Let $v = (x,y|\phi) \in \mathcal{A}^{2|1}_+$. If there exists some $w$ with $\omega(v,w) = 1$, then \Cref{lem:first_two_columns}
    tells us that there is some $g \in \osp(1|2)$ with $g \cdot e_1 = v$. If $x \neq 0$, we can choose $w = (0, x^{-1} | \phi)$,
    and if $y \neq 0$, we can choose $w = (-y^{-1}, 0 | \phi)$.
\end {proof}

\begin {remark}
    The image of $\mathcal{A}^{2|1}_+$ under the map $s$ was called the ``\emph{special light cone}'' $L_0^+$ in \cite{pz_19}, 
    and the decorated super Teichm\"{u}ller space of a polygon is the configuration space of tuples of points in this set, modulo
    the diagonal action of $\osp(1|2)$. From the point of view described in this section, we instead consider
    configurations of points in $\mathcal{A}^{2|1}_+$, up to diagonal action of $\osp(1|2)$. Also, by \Cref{rmk:plus-or-minus},
    we identify $v = -v$ in $\mathcal{A}^{2|1}_+$, since $s(v) = s(-v)$.
\end {remark}

The following is a kind of standard form result (which can be seen in the argument used in the proof of Lemma 3.1 from \cite{pz_19}).
In order to state it, we first define the vectors $u := s(e_1)$ and $w = s(e_2)$ in $\mathcal{A}^{3|2}$.

\begin {prop} \label{prop:standard_form_pair_appendix}
    Let $p,q \in L_0^+$ be two points in the special light cone with $\lambda = \lambda(p,q)$. 
    There is some $g \in \osp(1|2)$ such that 
    \[ g \cdot p = u \cdot \lambda^2  \quad \text{and} \quad  g \cdot q = w \]
    Moreover, $g$ is unique up to post-composition (i.e. left-multiplication) by $\rho$.
\end {prop}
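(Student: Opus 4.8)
The plan is to push everything through the equivariant map $s$, so that the problem becomes one about the ordinary linear action of $\osp(1|2)$ on $\mathcal{A}^{2|1}$, and there invoke \Cref{lem:first_two_columns}. The first observation I would record is that, since every coordinate of $s$ is a quadratic form, $s(cv) = c^2 s(v)$ for even $c$; hence $s(\lambda e_1) = \lambda^2 u = u\cdot\lambda^2$ and $s(e_2) = w$. By \Cref{prop:s_is_equivariant} it therefore suffices to produce $g \in \osp(1|2)$ and lifts $\tilde p, \tilde q \in \mathcal{A}^{2|1}_+$ of $p$ and $q$ (which exist because $L_0^+ = s(\mathcal{A}^{2|1}_+)$) with $g\,\tilde p = \lambda e_1$ and $g\,\tilde q = e_2$; applying $s$ then yields the two displayed identities.

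For existence, I would start from arbitrary lifts $\tilde p_0, \tilde q_0$. \Cref{prop:omega_is_lambda} gives $\omega(\tilde p_0, \tilde q_0)^2 = \left<p,q\right> = \lambda^2$, and comparing bodies (using that $\lambda$ is invertible) together with the freedom to replace $\tilde p_0$ by $-\tilde p_0$ (harmless for $s$, by \Cref{rmk:plus-or-minus}) lets me arrange lifts $\tilde p, \tilde q$ with $\omega(\tilde p, \tilde q) = \lambda$, equivalently $\omega(\lambda^{-1}\tilde p, \tilde q) = 1$. Then \Cref{lem:first_two_columns} produces the unique $h \in \osp(1|2)$ with first two columns $\lambda^{-1}\tilde p$ and $\tilde q$, i.e. $h e_1 = \lambda^{-1}\tilde p$ and $h e_2 = \tilde q$; taking $g := h^{-1}$ gives $g\,\tilde p = \lambda e_1$ and $g\,\tilde q = e_2$, as wanted.

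For uniqueness, suppose $g_1, g_2$ both work and put $k := g_2 g_1^{-1}$, which fixes $u\lambda^2 = s(\lambda e_1)$ and $w = s(e_2)$. Equivariance of $s$, together with the fact that $s(v)=s(v')$ for vectors of invertible body forces $v'=\pm v$ (immediate from the explicit formula for $s$), yields $k e_1 = \varepsilon_1 e_1$ and $k e_2 = \varepsilon_2 e_2$ for signs $\varepsilon_1,\varepsilon_2 \in \{\pm1\}$. Since $k$ preserves $\omega$, we get $\varepsilon_1\varepsilon_2 = \omega(\varepsilon_1 e_1,\varepsilon_2 e_2) = \omega(e_1,e_2) = 1$, so $\varepsilon_1 = \varepsilon_2$. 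If both are $+1$, then $k$ has first two columns $e_1,e_2$, so $k = \id$ by the uniqueness clause of \Cref{lem:first_two_columns}; if both are $-1$, then (since left multiplication by $\rho$ negates the first two rows, \Cref{rmk:fermionic_reflection}) $\rho k$ has first two columns $e_1, e_2$, whence $\rho k = \id$ and $k = \rho$. Either way $g_2 \in \{g_1, \rho g_1\}$, which is the asserted uniqueness up to left multiplication by $\rho$.

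I expect the main obstacle to be purely bookkeeping: the map $s$ is two-to-one with fibres $\{v,-v\}$, so every lift and every intermediate equation is determined only up to sign, and these signs must be pinned down using invertibility of suitable bodies; relatedly, one should note that $\lambda$ is necessarily invertible for the statement to make sense (if its body vanished, $g\cdot p = u\lambda^2$ would be impossible since $p\neq0$ and the action is invertible). Beyond that, the entire content is carried by \Cref{lem:first_two_columns}, which already encodes the defining equations of $\osp(1|2)$, so no new computation should be needed.
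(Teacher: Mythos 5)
Your proposal is correct and follows essentially the same route as the paper: reduce via the equivariance of $s$ to the linear action on $\mathcal{A}^{2|1}$, normalize lifts so that $\omega(\lambda^{-1}\tilde p,\tilde q)=1$ using \Cref{prop:omega_is_lambda}, apply \Cref{lem:first_two_columns}, and take the inverse, with the $\rho$-ambiguity traced back to the sign ambiguity of the lifts. Your uniqueness step (analyzing $k=g_2g_1^{-1}$ and showing $k\in\{\id,\rho\}$) is organized slightly differently from the paper's, and is in fact a bit more explicit about why no other ambiguity can arise, but it rests on the same facts.
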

\begin {proof}
    We will construct the inverse of the matrix $g$.
    Let $v_p,v_q \in \mathcal{A}^{2|1}$ be vectors such that $s(v_p) = p$ and $s(v_q) = q$. Then by \Cref{prop:omega_is_lambda},
    we know that $\omega(v_p,v_q) = \pm \lambda$. Since $\lambda$ has positive body, it is invertible, and we may replace
    $v_p$ with $v_p' = \pm \, v_p \cdot \frac{1}{\lambda}$, so that $\omega(v_p',v_q) = 1$. Thus by \Cref{lem:first_two_columns}, 
    there is a matrix $g \in \osp(1|2)$ whose first two columns are are $v_p'$ and $v_q$. If $e_1$, $e_2$, $\varepsilon$
    are the standard basis vectors of $\mathcal{A}^{2|1}$, then this matrix acts by $g \cdot e_1 = v_p'$ and $g \cdot e_2 = v_q$.
    \Cref{prop:s_is_equivariant} says the map $s$ is equivariant, so $g \cdot w = q$ and
    $g \cdot u = p \cdot \frac{1}{\lambda^2}$. Then clearly $g^{-1}$ is the matrix described in the proposition.

    Finally, we remark that the choices of $v_p$ and $v_q$ were not unique, since we may replace either (or both) by their negatives (\Cref{rmk:plus-or-minus}).
    If we replace $v_q \mapsto -v_q$, then when we re-scale $v_p$ to get $v_p'$,
    it will also be negated. The overall effect is that the first two columns of $g$ will be negated (but the third column will remain the same).
    This is the same as right-multiplication by $\rho$. Since the matrix from the statement is $g^{-1}$,
    it will be unique up to left-multiplication by $\rho$.
\end {proof}

\begin {prop} \label{prop:E_matrix_appendix}
    Let $p,q \in L_0^+$ be in the standard form guaranteed by \Cref{prop:standard_form_pair_appendix}. That is, if $\lambda = \lambda(p,q)$, assume
    that $q = w$ and $p = u \cdot \lambda^2$. Then $E(\lambda)$ and $E(-\lambda) = E(\lambda)^{-1}$ are the only
    matrices $g \in \osp(1|2)$ such that $g \cdot p = q$ and $g \cdot q = p$.
\end {prop}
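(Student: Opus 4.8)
The plan is to transfer the problem to $\mathcal{A}^{2|1}$ using the equivariant map $s$, check directly that $E(\lambda)$ and $E(-\lambda)$ work, and then obtain uniqueness from the stabilizer computation already contained in \Cref{prop:standard_form_pair_appendix}.

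First I would fix compatible lifts. Using the explicit formula for $s$ together with homogeneity, $s(\lambda e_1) = \lambda^2 s(e_1) = u\lambda^2 = p$ and $s(e_2) = w = q$, where $e_1,e_2$ are the even standard basis vectors of $\mathcal{A}^{2|1}$; note $\lambda$ is invertible since, being a $\lambda$-length, it has positive body. From \Cref{def:matrices} one reads off $E(x)\cdot e_1 = x^{-1}e_2$ and $E(x)\cdot e_2 = -x\,e_1$. Combining these with \Cref{prop:s_is_equivariant} and \Cref{rmk:plus-or-minus} ($s(v)=s(-v)$), we get $E(\lambda)\cdot p = s(E(\lambda)\cdot\lambda e_1) = s(e_2) = q$ and $E(\lambda)\cdot q = s(E(\lambda)\cdot e_2) = s(-\lambda e_1) = s(\lambda e_1) = p$; the same holds for $E(-\lambda) = E(\lambda)^{-1}$. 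So both $E(\pm\lambda)$ satisfy the conditions, and it remains to rule out any other $g$.

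Next I would identify the stabilizer of the ordered pair $(p,q)$ in $\osp(1|2)$. Since $p$ and $q$ are already in the standard form of \Cref{prop:standard_form_pair_appendix} (that is, $p = u\lambda^2$ and $q = w$), applying its uniqueness clause with $\mathrm{id}$ as one solution shows $\{g : g\cdot p = p,\ g\cdot q = q\} \subseteq \{\mathrm{id},\rho\}$. Conversely, $\rho$ does fix both: by equivariance and \Cref{rmk:plus-or-minus}, $\rho\cdot p = \lambda^2 s(\rho\cdot e_1) = \lambda^2 s(-e_1) = \lambda^2 s(e_1) = p$, and similarly $\rho\cdot q = q$. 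Hence the stabilizer of $(p,q)$ is exactly $\{\mathrm{id},\rho\}$.

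Finally, if $g\in\osp(1|2)$ satisfies $g\cdot p = q$ and $g\cdot q = p$, then, using that $E(\lambda)$ also swaps $p$ and $q$, the element $E(\lambda)^{-1}g$ fixes both $p$ and $q$, so $E(\lambda)^{-1}g\in\{\mathrm{id},\rho\}$ and therefore $g\in\{E(\lambda),\,E(\lambda)\rho\}$. Since $E(\lambda)\rho = E(-\lambda)$ (a one-line check, or from $\rho E(x)=E(-x)$ in \Cref{def:matrices} together with the fact that $\rho$ and $E(x)$ commute), we conclude $g\in\{E(\lambda),E(-\lambda)\}$. The only genuinely delicate point is getting the lifts and signs right — verifying that $s(\lambda e_1)$ is exactly $p$ and that $\rho$ fixes rather than negates $p$ and $q$ — which both reduce to the homogeneity of $s$ and \Cref{rmk:plus-or-minus}; once these are set up, the argument is formal. (Alternatively, one could bypass \Cref{prop:standard_form_pair_appendix} entirely: by equivariance, $g$ must send $\lambda e_1\mapsto\pm e_2$ and $e_2\mapsto\pm\lambda e_1$, and $\omega$-preservation forces the two signs to be opposite, so \Cref{lem:first_two_columns} leaves exactly the two first-two-column data realized by $E(\pm\lambda)$.)
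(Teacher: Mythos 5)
Your proposal is correct and follows essentially the same route as the paper: existence is checked by reading off the columns of $E(\lambda)$ and transporting through the equivariant map $s$ using its homogeneity, and uniqueness is reduced to the ``unique up to $\rho$'' clause of \Cref{prop:standard_form_pair_appendix}. Your detour through the stabilizer $\{\mathrm{id},\rho\}$ of the pair $(p,q)$ is just a slightly more explicit way of invoking that same uniqueness clause, which the paper applies in one line.
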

\begin {proof}
    By \Cref{prop:standard_form_pair_appendix}, such a matrix is unique up to $\rho$. Note that $E(-\lambda) = \rho \, E(\lambda)$,
    so if we show that $E(\lambda)$ satisfies the conditions, then we are done. 

    If $e_1,e_2,\varepsilon$ are the standard basis vectors of $\mathcal{A}^{2|1}$, then looking at the columns of $E(\lambda)$ shows that
    \[ E(\lambda) \cdot e_1 = e_2 \cdot \lambda^{-1} \quad \quad \text{and} \quad \quad E(\lambda) \cdot e_2 = -e_1 \cdot \lambda \]
    Re-arranging the first equation gives $E(\lambda) \cdot (e_1 \cdot \lambda) = e_2$. Applying $s$ to these equations, remembering
    that $s$ is equivariant, and that $s(v \cdot \alpha) = s(v) \cdot \alpha^2$, we get
    \[ E(\lambda) \cdot (u \cdot \lambda^2) = w \quad \quad \text{and} \quad \quad E(\lambda) \cdot w = u \cdot \lambda^2 \qedhere\]
\end {proof}

To prove the corresponding result about the $A(h|\theta)$ matrices, we will use the following lemma.

\begin {lemma} \label{lem:third_point_coordinates}
    Let $p_1,p_2,p_3 \in L_0^+$, with $\lambda(p_i,p_j) = \lambda_{ij}$, and $\mu$-invariant $\theta$. 
    Suppose the edge $p_1,p_3$ is in the standard form guaranteed by \Cref{prop:standard_form_pair_appendix}. 
    That is, assume that $p_1 = u \cdot \lambda_{13}^2$ and $p_3 = w$. Then $p_2 = s(z)$, where
    \begin {itemize}
        \item If $p_1, p_2, p_3$ appear in clockwise order, then $z = \left(\lambda_{23}, \; \frac{\lambda_{12}}{\lambda_{13}} ~ \middle| ~ \pm \td{2}{1}{3} \right)$. 
        \item If $p_1, p_2, p_3$ appear in counter-clockwise order, then $z = \left(\lambda_{23}, \; - \frac{\lambda_{12}}{\lambda_{13}} ~ \middle| ~ \pm \td{2}{1}{3} \right)$. 
    \end {itemize}
    In both cases, the sign of $\td{2}{1}{3}$ cannot be determined from this information alone.
\end {lemma}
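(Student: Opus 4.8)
Working in the model $\mathcal{A}^{2|1}$ afforded by the equivariant map $s$ (where the $\osp(1|2)$-action is ordinary matrix multiplication and, by \Cref{prop:omega_is_lambda}, $\lambda$-lengths are recovered up to sign from the form $\omega$), the plan is as follows. Since each $p_i$ lies in $L_0^+$, I would first lift it to a vector $v_i \in \mathcal{A}^{2|1}_+$ with $s(v_i) = p_i$; by \Cref{rmk:plus-or-minus} this lift is well-defined only up to an overall sign. Using $s(v\cdot\alpha) = s(v)\cdot\alpha^2$ and the hypotheses $p_1 = u\cdot\lambda_{13}^2 = s(e_1)\cdot\lambda_{13}^2$ and $p_3 = w = s(e_2)$, one gets $v_1 = (\pm\lambda_{13},0\mid 0)$ and $v_3 = (0,\pm 1\mid 0)$; fixing the signs of the lifts $v_1$ and $v_3$, I may assume $v_1 = (\lambda_{13},0\mid 0)$ and $v_3 = (0,1\mid 0)$, leaving only the sign of $v_2$ free.

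Writing $v_2 = (x,y\mid\phi)$, the next step determines the even coordinates. From $\omega(v_1,v_2) = \lambda_{13}\,y$, $\omega(v_2,v_3) = x$, and \Cref{prop:omega_is_lambda} (so $\omega(v_1,v_2)^2 = \lambda_{12}^2$ and $\omega(v_2,v_3)^2 = \lambda_{23}^2$), one obtains $y = \pm\lambda_{12}/\lambda_{13}$ and $x = \pm\lambda_{23}$; using the remaining sign freedom on $v_2$ I normalize $x = \lambda_{23}$, so that $y = \epsilon\,\lambda_{12}/\lambda_{13}$ with a single undetermined sign $\epsilon$. To pin $\epsilon$ to the cyclic order of $p_1,p_2,p_3$ I would pass to bodies: the classical part of $s$ depends only on $[x:y]\in\mathbb{RP}^1$, the classical form $\omega$ is the $2\times2$ determinant, and its sign records the orientation of the ordered pair of vectors; inspecting the three explicit bodies then shows that $p_1,p_2,p_3$ occur clockwise exactly when $\epsilon=+1$ and counter-clockwise exactly when $\epsilon=-1$, matching the statement.

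It remains to identify $\phi$. Here I would import the definition of the $\mu$-invariant of a triple of points in the special light cone from \cite[\S1]{pz_19} (in the conventions of \cite{moz21}): it assigns to $(p_1,p_2,p_3)$ an $\osp(1|2)$-invariant odd element of $\mathcal{A}$, and transporting it through $s$ — via \Cref{prop:s_is_equivariant} and the matrices $Q(v)$ appearing there — rewrites it as an explicit expression in $v_1,v_2,v_3$. Evaluating on the now-explicit vectors collapses that expression to $c\cdot\phi$ for an even scalar $c$; comparing with the normalization $\theta = \boxed{p_1p_2p_3}$ and \Cref{def:norm_mu} then identifies $\phi$ with $\sqrt{\lambda_{12}\lambda_{23}/\lambda_{13}}\,\theta = \td{2}{1}{3}$ up to sign. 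That residual sign is genuinely undetermined: by \Cref{prop:standard_form_pair_appendix} the standardizing matrix is unique only up to left-multiplication by $\rho$, and the residual $\rho$ fixes $u$ and $w$ (hence $p_1$ and $p_3$) while negating the fermionic part of $p_2$ — equivalently, negating $\theta$ — so no choice within the standard form can determine the sign of $\td{2}{1}{3}$.

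The main obstacle is this last step: carrying the $\mathcal{A}^{3|2}$ definition of the $\mu$-invariant through $s$ and computing the proportionality constant $c$ exactly, since that is where the square-root normalizations of \Cref{def:norm_mu} and the sign conventions for $\mu$-invariants must be reconciled. By contrast the even-coordinate computation is immediate from $\omega$, and the orientation bookkeeping reduces to a routine check on the three bodies.
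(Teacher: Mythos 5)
Your proposal is correct and follows essentially the same route as the paper: even coordinates of $z$ from $\omega$ via \Cref{prop:omega_is_lambda}, the relative sign of $x$ and $y$ from the orientation of the triple, and the odd coordinate by transporting the triple into Penner--Zeitlin's standard form and comparing with their definition of the $\mu$-invariant. The one step you defer (computing the proportionality constant $c$) is exactly where the paper applies the explicit diagonal matrix $g = \mathrm{diag}\bigl(\sqrt{h^3_{12}},\,1/\sqrt{h^3_{12}},\,1\bigr)$ to get $g\cdot z = \bigl(1,1\,\big|\,\sqrt{h^2_{13}}\,\alpha\bigr)\cdot\frac{1}{\sqrt{h^2_{13}}}$ and read off $\alpha = \pm\td{2}{1}{3}$; your $\rho$-freedom explanation of the residual sign is a valid alternative to the paper's citation of Corollary 3.4 of \cite{pz_19}.
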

\begin {proof}
    We will show the calculation for $(a)$, and that of $(b)$ is similar. 
    Suppose that $p_1,p_2,p_3$ are in clockwise order. This means they are a ``negative triple'' (in the language of \cite{pz_19}). 
    Let $p_2 = s(z)$ for $z = (x,y|\alpha) \in \mathcal{A}^{2|1}$. Because this is a negative triple, the signs of $x$ and $y$ must be the same.
    By \Cref{prop:omega_is_lambda}, we can deduce that
    \[ \lambda_{12} = \lambda(u \cdot \lambda_{13}^2, p_2) = \pm \omega(e_1 \cdot \lambda_{13}, z) = \pm y \cdot \lambda_{13} \]
    \[ \lambda_{23} = \lambda(p_2, w) = \pm \omega(z, e_2) = \pm x \]
    Since $s(z) = s(-z)$, we may choose $z$ so that $x = \lambda_{23}$ and $y = \frac{\lambda_{12}}{\lambda_{13}}$.
    In order to compare $\alpha$ with the $\mu$-invariant $\theta$, we need to put the triple $p_1,p_2,p_3$ into the standard form 
    described in Lemma 3.3 and Lemma 3.5 of \cite{pz_19}.
    This can be done with the following matrix:
    \[ g = \ospmatrix {\sqrt{h^3_{12}}}{0}{0} {0}{\frac{1}{\sqrt{h^3_{12}}}}{0} {0}{0}{1} \]
    The effect on $z$ is then
    \[ g \cdot z = \left( 1, 1 ~\middle|~ \sqrt{h^2_{13}} \; \alpha \right) \cdot \frac{1}{\sqrt{h^2_{13}}} \]
    By the definition of $\mu$-invariant from \cite{pz_19}, we see that $\alpha = \frac{\pm \theta}{\sqrt{h^2_{13}}} = \pm \td{2}{1}{3}$.
    By Corollary 3.4 in \cite{pz_19}, knowing the three points $p_1$, $p_2$, $p_3$ only determines $\theta$ up to sign.

    The proof of $(b)$ is the the same, except $x$ and $y$ have opposite signs, and there are some signs in the diagonal matrix $g$.
\end {proof}

The following proposition is essentially a restatement of what is called the ``\emph{basic calculation}'' in section 4 of \cite{pz_19},
but phrased in a way that highlights the significance of the $A$-matrices we defined in \Cref{sec:Mpaths}.

\begin {prop} \label{prop:A_matrix_appendix}
    Let $p_i, p_\ell, p_k \in L_0^+$ in clockwise order, as in \Cref{fig:super_ptolemy}, with $\lambda$-lengths $a,b,e$ and $\mu$-invariant $\theta$, 
    and suppose the edge $p_i,p_k$ is in the standard form guaranteed by \Cref{prop:standard_form_pair_appendix}. 
    That is, $p_i = u \cdot e^2$ and $p_k = w$. Then there is a unique point $p_j \in L_0^+$ such that 
    \begin {itemize}
        \item[$(a)$] The triangle $p_i, p_j, p_k$ has the $\lambda$-lengths and $\mu$-invariant as in \Cref{fig:super_ptolemy}.
        \item[$(b)$] The point $p_j$ is defined by $A^k_{ij} \cdot p_j = u \cdot c^2$ (or $A^k_{ij}\rho \cdot p_j = u \cdot c^2$), depending on
                     the orientation of the edge $(i,k)$. In other words, $A^k_{ij}$ (or $A^k_{ij} \rho$) puts the edge $(j,k)$ into standard position.
    \end {itemize}
\end {prop}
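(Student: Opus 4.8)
The plan is to run the whole argument inside $\mathcal{A}^{2|1}$, where $\osp(1|2)$ acts by ordinary matrix--vector multiplication, and then push conclusions to $L_0^+\subseteq\mathcal{A}^{3|2}$ through the equivariant map $s$ of \Cref{prop:s_is_equivariant}, using the scaling identity $s(v\cdot\alpha)=s(v)\cdot\alpha^2$ throughout. In these coordinates the standard-form hypothesis reads $p_i=s(e_1\cdot e)=u\cdot e^2$ and $p_k=s(e_2)=w$. A one-line check against \Cref{def:matrices} shows that $A(h\mid\theta)\cdot e_2=e_2$ for every $h,\theta$, so $A^k_{ij}$ fixes $p_k=w$; hence condition (b) is equivalent to saying that $A^k_{ij}$ carries the ordered pair $(p_j,p_k)$ to $(u\cdot c^2,\,w)$, i.e.\ puts the edge $(j,k)$ into standard position. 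I will write out the case where (b) reads $A^k_{ij}\cdot p_j=u\cdot c^2$; the opposite orientation of $(i,k)$, for which (b) reads $A^k_{ij}\rho\cdot p_j=u\cdot c^2$, is handled identically after inserting a $\rho$ (which also fixes $w$, since $\rho e_2=-e_2$ and $s(-v)=s(v)$).

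For existence I would simply \emph{define} $p_j$ by condition (b): set $p_j:=(A^k_{ij})^{-1}\cdot(u\cdot c^2)$, where $A^k_{ij}=A\bigl(h^k_{ij}\mid\sigma\bigr)$ is built from the prescribed data via $h^k_{ij}=\lambda_{ij}/(\lambda_{ik}\lambda_{jk})=d/(ec)$. Since $u\cdot c^2=s(e_1\cdot c)$ has nonzero body and $\osp(1|2)$-matrices have invertible bodies, $p_j\in L_0^+$. Applying the explicit inverse from \Cref{def:matrices} to $e_1\cdot c$ and using equivariance of $s$ yields the closed form
\[ p_j=s\!\left(c,\ -\tfrac{d}{e}\ \middle|\ -\td{j}{i}{k}\right),\qquad \td{j}{i}{k}=\sqrt{\tfrac{dc}{e}}\,\sigma. \]

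Next I would verify (a) directly from this formula. Because $A^k_{ij}$ preserves the form $\omega$ and fixes $w$, \Cref{prop:omega_is_lambda} gives $\lambda(p_j,p_k)=\lambda(u\cdot c^2,w)=\pm\,\omega(e_1 c,e_2)=\pm c=c$ (positive body), and similarly $\lambda(p_i,p_k)=\pm\,\omega(e_1 e,e_2)=e$, as hypothesized; evaluating $\omega$ on $e_1\cdot e=(e,0\mid 0)$ against the displayed vector for $p_j$ gives $\lambda(p_i,p_j)=\pm(-d)=d$ (positive body). The vector for $p_j$ has exactly the shape $\bigl(\lambda_{jk},\,-\lambda_{ij}/\lambda_{ik}\mid\pm\td{j}{i}{k}\bigr)$ of \Cref{lem:third_point_coordinates}(b), which is the case describing a \emph{counter-clockwise} triple $p_i,p_j,p_k$ with $\mu$-invariant $\sigma$ (up to the unavoidable sign ambiguity of $\mu$-invariants); so $p_i,p_j,p_k$ sit in the counter-clockwise cyclic order of \Cref{fig:super_ptolemy} and the triangle's $\mu$-invariant is $\sigma$, the sign being pinned by the choice made in (b), i.e.\ by the orientation of $(i,k)$. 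For uniqueness, note that any $p_j'$ satisfying (a) has $\lambda$-lengths $d,c$ and $\mu$-invariant $\sigma$, so the matrix $A^k_{ij}$ in (b) is literally the same, and (b) forces $p_j'=(A^k_{ij})^{-1}\cdot(u\cdot c^2)=p_j$.

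The hard part will be the sign bookkeeping rather than any substantive computation: the ubiquitous $\pm$ in $\lambda=\pm\,\omega$, the identification $s(v)=s(-v)$ that makes $\mu$-invariants and the clockwise/counter-clockwise dichotomy meaningful only up to sign, and the two orientation cases $A^k_{ij}$ versus $A^k_{ij}\rho$. In particular, getting the $\mu$-invariant to come out as exactly $\sigma$ — consistent with \Cref{fig:super_ptolemy} — requires carefully matching the standard-form conventions of \cite[Lemmas 3.3, 3.5]{pz_19} (invoked inside \Cref{lem:third_point_coordinates}) against the recorded orientation of $(i,k)$; every other step reduces to a routine $\osp(1|2)$ matrix product.
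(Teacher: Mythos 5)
Your proposal is correct and follows essentially the same route as the paper's proof: take (b) as the definition of $p_j$, compute the explicit preimage vector in $\mathcal{A}^{2|1}$ as (a column of) $(A^k_{ij})^{-1}$ applied to $e_1\cdot c$, read off the $\lambda$-lengths via $\omega$ and \Cref{prop:omega_is_lambda}, and identify the $\mu$-invariant by the calculation of \Cref{lem:third_point_coordinates}. The only difference is cosmetic: you write out the orientation case without the $\rho$ (getting $v=(c,-d/e\mid -\td{j}{i}{k})$) while the paper writes out the case with it (getting $v=(-c,d/e\mid -\td{j}{i}{k})$), and you add some detail the paper leaves implicit (that $A^k_{ij}$ fixes $w$, and the uniqueness argument).
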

\begin {proof}
    Part $(b)$ completely determines some point. We simply need to see that it satisfies the claim of part $(a)$. Let us consider the case pictured in
    \Cref{fig:super_ptolemy}, where the ege labeled ``$e$'' is directed $p_i \to p_k$. If $p_j = s(v)$, then $v$ must be (up to sign)
    the first column of $\rho {A^k_{ij}}^{-1}$, multiplied by $c$, which gives
    \[ v = \left( -c, \; \frac{d}{e} ~ \middle| ~ - \td{j}{i}{k} \right) \]
    Now, it is straightforward to check, using \Cref{prop:omega_is_lambda}, that
    \[ \lambda_{ij} = \omega(v,e_1) \cdot e = d \quad \text{and} \quad  \lambda_{jk} = \omega(v,e_2) = c, \]
    and by the same calculation done in \Cref{lem:third_point_coordinates}, we see that the $\mu$-invariant $\boxed{ijk}$ is $\sigma$.
\end {proof}

We conclude with a discussion of how the holonomy matrices $H_{a,b}$ from \Cref{def:H_ab} and \Cref{thm:generic} can be interpreted in this
geometric context. The main point is that we may represent a polygon as a configuration of points in
$\mathcal{A}^{2|1}$ or $\mathcal{A}^{3|2}$, in such a way so that if the first edge $(c_0,c_1)$ of the canonical path is in standard position,
then the matrix $H_{ab}$ is the transformation which puts the final edge $(c_N,c_{N+1})$ into standard position. Indeed, we can build this configuration
of points inductively. Start by choosing three points for the first triangle using \Cref{prop:standard_form_pair_appendix} and \Cref{lem:third_point_coordinates}.
Then \Cref{prop:A_matrix_appendix} tells us how to choose the third point of the next triangle (which shares one side with the first one).
Then we may use some product of $E$ or $A$ matrices to put the appropriate edge in standard position and continually use \Cref{prop:A_matrix_appendix}
to choose each subsequent point.

\section* {Acknowledgments}

The authors would like to acknowledge the support of the NSF grants DMS-1745638 and DMS-1854162.

\bibliographystyle {alpha}
\bibliography {main}

\end{document}